\documentclass[11pt]{amsart}

\usepackage{amsmath,amsthm}
\usepackage{mathtools}
\usepackage{subfig}
\usepackage{amssymb}
\usepackage{graphicx}
\usepackage{nicematrix}
\usepackage{pinlabel}
\usepackage{lineno}
\usepackage{fullpage}
\usepackage{hyperref}

\setcounter{MaxMatrixCols}{15}

\newcommand{\ints}{\mathbb{Z}}
\newcommand{\reals}{\mathbb{R}}

\newcommand{\bG}{\mathbb{G}}

\newcommand{\bB}{\mathbb{B}}
\newcommand{\bT}{\mathbb{T}}
\newcommand{\embG}{\mathbf{G}}

\DeclareMathOperator{\im}{im}
\DeclareMathOperator{\diag}{diag}
\DeclareMathOperator{\Adj}{\Theta}

\newtheorem{theorem}{Theorem}[section]
\newtheorem{corollary}[theorem]{Corollary}

\newtheorem{lemma}[theorem]{Lemma}
\newtheorem{proposition}[theorem]{Proposition}

\theoremstyle{definition}
\newtheorem{convention}[theorem]{Convention}
\newtheorem{definition}[theorem]{Definition}
\newtheorem{example}[theorem]{Example}

\theoremstyle{remark}
\newtheorem{remark}[theorem]{Remark}

\title{The critical group of a combinatorial map}
\author{Criel Merino${}^{*}$}
\address{Instituto de Matem\'aticas, Universidad Nacional Aut\'onoma de M\'exico, Ciudad de M\'exico, 04510, M\'exico}
\email{merino@matem.unam.mx}
\thanks{${}^{*}$~Supported by Grant CONACYT A1-S-8195.}

\author{Iain Moffatt${}^{\dagger}$}
\address{Department of Mathematics, Royal Holloway, University of London, Egham, TW20~0EX, United Kingdom}
\email{iain.moffatt@rhul.ac.uk}
\thanks{${}^{\dagger}$~Supported the Engineering and Physical Sciences Research Council [grant number EP/W033038/1].}

\author{Steven Noble${}^{\ddagger}$}
\address{School of Computer Science, University~of~Leeds, Leeds, LS2~9JT, United Kingdom}
\email{s.d.noble@leeds.ac.uk}
\thanks{${}^{\ddagger}$~Supported the Engineering and Physical Sciences Research Council [grant number EP/W032945/1].
\\
For the purpose of open access, the authors have applied a Creative Commons
Attribution (CC BY) licence to any Author Accepted Manuscript version arising. \\ No underlying data is associated with this article.}

\subjclass[2020]{Primary 05C10, 05C25; Secondary 05C50, 05C57, 20K01, 91A43}

\date{March 17, 2025}

\keywords{chip firing, critical group, embedded graph, sandpile group, sandpile model, Laplacian, map, Matrix–Tree Theorem, quasi-tree}

\begin{document}

\begin{abstract}
Motivated by the appearance of embeddings in the theory of chip-firing and the critical group  of a graph, we introduce a version of the critical group (or sandpile group) for combinatorial maps, that is, for  graphs embedded in orientable surfaces. 
We provide several definitions of our critical group, by approaching it through analogues of the cycle--cocycle matrix, the Laplacian matrix, and as the group of critical states of a chip-firing game (or sandpile model) on the edges of a map.

Our group can be regarded as a perturbation of the classical critical group of its underlying graph by topological information, and it agrees with the classical critical group in the plane case. 
Its cardinality is equal to the number of spanning quasi-trees in a connected map, just as the cardinality of the classical critical group is equal to the number of spanning trees of a connected graph.

Our approach exploits the properties of principally unimodular matrices and the methods of delta-matroid theory.
\end{abstract}

\maketitle

\section{Introduction}\label{sec:intro}

The \emph{critical group} $K(G)$ is a finite Abelian group associated with a connected graph $G$. 
Its order is equal to the number of spanning trees in the graph, a fact equivalent to Kirchhoff's Matrix--Tree Theorem and a consequence of its definition as the cokernel of the reduced graph Laplacian. 
The  group is well-established in combinatorics and physics arising in, for example, the theory of the chip-firing game (known as the sandpile model in physics), parking functions, flow spaces and cut spaces. It has connections with algebraic and arithmetic geometry as a Picard group and through the Riemann--Roch Theorem~\cite{MR2355607}.  As often happens with objects arising in a variety of different areas, the critical group goes by several different names, including the \emph{sandpile group}~\cite{MR1044086}, the \emph{group of components}~\cite{MR1129991}, the critical group~\cite{MR1676732}, or the \emph{Jacobian} of a graph~\cite{zbMATH01106861}. An overview of the historical development of the critical group is contained in~\cite{zbMATH05379217}, and we refer the reader to the books~\cite{MR3793659,zbMATH06952170} for general background on it.

Although the critical group is defined as a purely graph theoretic notion, depending entirely on the abstract structure of a graph, many aspects of the area are  topological in nature. 
Planarity appears in early  results on the critical group due to Berman~\cite{MR819700} and Cory and Rossin~\cite{MR1756151}, and more recent results of Chan, Thomas and Grochow~\cite{MR3373049}.
Results such as those of Baker and Wang~\cite{BW}, Ding~\cite{MR4328899} 
and Shokrieh and Wright~\cite{MR4643471} on Bernardi and rotor-router torsor structures hold only in the plane case. 
Indeed, although both torsors are defined on abstract graphs their
definition requires a cyclic ordering of the edges around each vertex provided by an embedding.

We believe results that hold only for plane and planar graphs should be instances of more general results about graphs embedded in higher genus surfaces. Therefore, in order to properly understand key aspects of the critical
group, the natural setting should be topological graph theory. So in order
to achieve the full potential of the theory one must extend the definition of the critical group so that it no longer depends only on the adjacencies of the graph but instead takes into account an embedding.

In this paper we define the critical group for embedded graphs. 
As concepts underlying the various definitions of the classical critical group do not directly extend to the topological case,
 new ideas are needed.
We are guided by recent work~\cite{zbMATH07094555,zbMATH07055741,zbMATH07456251}  showing that the analogous object to a matroid in the topological setting is a delta-matroid. (For this paper, no knowledge of matroids or delta-matroids is
required.) 
Our starting point is the definition of the classical critical group
using fundamental cycles and cocycles, which are objects associated with the cycle matroid of a graph. 
Neither the notion of a fundamental cycle nor a fundamental cocycle extends immediately to the topological setting. 
A key technical contribution of this paper is the resolution of these difficulties by drawing on methods from delta-matroid theory.
By applying these techniques, we show that
the collection of fundamental cycles and cocycles taken together may be extended to embedded graphs, 
leading to a natural analogue of the cycle--cocycle matrix.
We define the critical group of an embedded graph to be the cokernel of this matrix.

It is not immediately obvious that the group is well-defined. Again, we are able to exploit techniques from delta-matroid theory to show that this is indeed the case.
In general, the theory of the critical group of maps is distinct from that for abstract graphs, with the theories coinciding if and only if the map is plane. 
Nevertheless, the theory for maps parallels that for abstract graphs.  
For example, a key consequence of this new theory is the statement and proof of a version of Kirchhoff's Matrix--Tree Theorem for graphs in surfaces, where the number of spanning trees is replaced by the number of spanning quasi-trees in an embedded graph.

In analogy with the classical theory, we also give a definition for the critical group of an embedded graph in terms of a Laplacian. The proof that this Laplacian definition coincides with the one described above is a key technical result of the paper.   
In particular, we apply it to show that the critical group of a map can be defined in terms of the critical states of a chip-firing game.  
The bullet points in Section~\ref{sec:bullets} provide an overview of all of our main results.

We note that while our approach is informed throughout by delta-matroids, our final solution does not reference delta-matroids explicitly. Informally speaking, the amenability of the critical group of an embedded graph to techniques from delta-matroid theory suggests that it should depend solely on the delta-matroid of the embedded graph. However, somewhat surprisingly,  we show in Remark~\ref{dfth} that this is not the case. Nevertheless, as we demonstrate, the methods prove to be invaluable. 

\medskip

This paper is structured as follows. In Section~\ref{sec:prelim}, we provide key background definitions, discuss the critical group in the classical case, and review (combinatorial) maps which provide our formalism for describing graphs embedded in surfaces.
In Section~\ref{sec:bullets} we introduce the critical group of a map and provide an overview of our main results. We then prove that the group is well-defined in Section~\ref{s:well} before establishing connections with the classical case in Section~\ref{sec:connections}. In particular, we describe how the critical group of a map can be regarded as a perturbation of the classical critical group of its underlying graph by topological information.  
Section~\ref{sec:qtrees} provides an analogue of Kirchhoff's Matrix--Tree Theorem for embedded graphs, and what the critical group tells us about  spanning quasi-trees.  
In Section~\ref{s.Laplace},  we give an analogue of the Laplacian definition for the critical group, proving that the two definitions result in isomorphic groups.
We exploit the Laplacian definition in Section~\ref{sec:chip} to describe a chip-firing game on maps for which the critical group arises as its group of critical states. 
To illustrate key points in the theory we close by computing examples of the critical group and comparing the results with the critical groups of the corresponding non-embedded graphs.

\medskip

We would like to thank Petter Br\"and\'en, Tom Tucker, and Roman Nedela for very helpful discussions. A revised version of this paper was prepared while I.M and S.N. were participating in an INI Retreat and we would like to thank the Isaac Newton Institute for their hospitality. 

\section{Preliminaries}\label{sec:prelim}
\subsection{The classical critical group}\label{ss:class}

An \emph{abstract graph} comprises two sets $V$ and $E$, its sets of \emph{vertices} and \emph{edges}, respectively. Two not necessarily distinct vertices, the \emph{endvertices} of $e$, are associated with each edge $e$. Abstract graphs are normally just called graphs, but we prepend the adjective abstract in order to clearly distinguish them from  embedded graphs which will be our main topic of interest. Observe that abstract graphs may have \emph{multiple edges}, that is edges with the same set of endvertices, and \emph{loops} which are edges whose two endvertices coincide. We assume familiarity with the basic concepts of graph theory as in~\cite{zbMATH05202336}.

If each edge of an abstract graph $G$ is given a direction, then we obtain 
an abstract directed graph $\vec G$. 
Let $\vec G$ be a directed, connected abstract graph with edge set $E$, and $T$ be the edge set of a spanning tree of $G$. 
For each edge $e$ in $E- T$, the \emph{fundamental cycle}, $C(T,e)$ is the unique cycle of $G$ whose edges all belong to $T\cup \{e\}$. 
Its \emph{signed incidence vector} $x$ is a vector indexed by $E$
and satisfies $x_i=0$ if $i \notin C(T,e)$ and $x_i=\pm 1$ if $i \in C(T,e)$, with the sign determined as follows. If we walk around $C(T,e)$ in the direction determined by the direction of $e$ in $\vec G$, then $x_i=1$ if and only if the direction of edge $i$ in $\vec G$ is consistent with the direction of our walk. 
For each edge $e$ in $T$, the \emph{fundamental cocycle}, $C^{\ast}(T,e)$ is the unique cocycle of $G$, that is, a minimal set of edges whose removal disconnects $G$, containing no edge in $T$ except $e$. 
Its \emph{signed incidence vector} $x$ satisfies $x_i=0$ if $i \notin C^{\ast}(T,e)$, $x_i=1$ if $i \in C^{\ast}(T,e)$ and
the head of $i$ lies in the same component of $G\backslash C^{\ast}(T,e)$ as the head of $e$ and 
$x_i=-1$ if $i \in C^{\ast}(T,e)$ and
the head of $i$ lies in the same component of $G\backslash C^{\ast}(T,e)$ as the tail of $e$. The \emph{signed cycle--cocycle matrix} $M(\vec G,T)$ is the matrix with rows and columns indexed by $E$, so that the row indexed by edge $e$ is the signed incidence vector of either the fundamental directed cycle containing $e$ or the fundamental directed cocycle containing $e$ depending on whether or not $e$ is an edge in $T$. See~\cite{MR2849819}  for background.

If we regard the rows of an $m\times m$ integer matrix $A$ as elements of the free $\ints$-module $\ints^m$, then the quotient module $\ints^m/ \langle A \rangle$ is called the \emph{cokernel} of $A$. The \emph{(classical) critical group} $K(G)$ of a connected abstract graph $G$ can be defined as the cokernel of the
signed cycle--cocycle matrix $M(\vec G,T)$:
\[K(G) := \ints^{|E|} / \langle M(\vec G,T) \rangle.\]
It is independent of the choice of direction given to the edges of $G$ and the choice of $T$. See~\cite{MR1676732} for details.

\medskip

We now describe an alternative approach to defining the classical critical group of a connected, abstract graph. Suppose that $G$ is connected and loopless.
Then its Laplacian, $\Delta(G)$, is the matrix with rows and columns indexed by the vertices of $G$ defined by 
\[ \Delta(G)_{i,j} := \begin{cases} -m_{i,j} & \text{if $i\neq j$,}\\
\deg(i) & \text{if $i= j$.} \end{cases}\]
Here, $\deg(i)$ is  the degree of vertex $i$, that is the number of times it occurs as an edge end; and $m_{i,j}$ is the number of edges joining $i$ and $j$.

Any matrix obtained from $\Delta(G)$ by choosing a vertex $q$ of $G$ and deleting the corresponding row and column of $\Delta(G)$ is called a \emph{reduced Laplacian} of $G$ and is denoted by $\Delta^q(G)$. The celebrated Matrix--Tree Theorem~\cite{18471481202} is as follows.
\begin{theorem}[Matrix--Tree Theorem]\label{kmt}
Let $G$ be a connected, loopless abstract graph with vertex $q$. Then the number of spanning trees of $G$ is equal to $\det(\Delta^q(G))$.
\end{theorem}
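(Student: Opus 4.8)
The plan is to prove the result via the Cauchy--Binet formula applied to an oriented incidence matrix, which is the standard route and fits the incidence-based viewpoint of this paper. First I would fix an arbitrary orientation of the edges, turning $G$ into a directed graph $\vec G$, and form its signed vertex--edge incidence matrix $D$: the $|V|\times|E|$ matrix with $D_{v,e}=+1$ if $v$ is the head of $e$, $D_{v,e}=-1$ if $v$ is the tail of $e$, and $D_{v,e}=0$ otherwise. A direct computation of $(DD^{T})_{i,j}$ shows that the off-diagonal entry is $-m_{i,j}$ and the diagonal entry is $\deg(i)$; here looplessness is used to guarantee that each edge contributes exactly one $+1$ and one $-1$ in distinct rows, so that nothing spoils the diagonal. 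Hence $\Delta(G)=DD^{T}$, and deleting the row indexed by $q$ to obtain $D_q$ gives $\Delta^{q}(G)=D_qD_q^{T}$.

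Next I would apply the Cauchy--Binet formula to the $(|V|-1)\times|E|$ matrix $D_q$, obtaining
\[
\det(\Delta^{q}(G))=\det(D_qD_q^{T})=\sum_{S}\bigl(\det D_q[S]\bigr)^{2},
\]
where the sum ranges over all sets $S$ of $|V|-1$ edges and $D_q[S]$ denotes the square submatrix of $D_q$ with columns indexed by $S$.

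The crux is then the following lemma: for each such $S$ one has $\det(D_q[S])\in\{-1,0,1\}$, with $\det(D_q[S])=\pm1$ precisely when $S$ is the edge set of a spanning tree of $G$, and $\det(D_q[S])=0$ otherwise. Granting this, each spanning tree contributes $1$ to the sum and every other $S$ contributes $0$, so $\det(\Delta^{q}(G))$ equals the number of spanning trees, as required.

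I expect this lemma to be the main obstacle, and I would establish it in two cases. If $S$ contains a cycle, then traversing that cycle produces a $\pm1$ linear dependence among the corresponding columns of $D$, hence of $D_q$, so $\det(D_q[S])=0$. If $S$ is acyclic, then since $|S|=|V|-1$ and $G$ is connected, $S$ is a spanning tree; here I would argue by induction on $|V|$, repeatedly locating a leaf of $S$ that is distinct from $q$. Such a leaf yields a row of $D_q[S]$ with a single nonzero entry $\pm1$; expanding the determinant along that row reduces the problem to the incidence matrix of the tree with the leaf and its incident edge deleted, and the inductive hypothesis gives $\det(D_q[S])=\pm1$. This leaf-deletion argument simultaneously establishes the total unimodularity that bounds the determinant and pins its value to $\pm1$ exactly on spanning trees.
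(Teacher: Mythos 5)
Your proposal is correct and complete, but there is nothing in the paper to compare it against: the paper states Theorem~\ref{kmt} as a classical result, citing Kirchhoff, and gives no proof of it (the theorem is background used to derive Theorem~\ref{classicalorder} via Theorem~\ref{thm:classicalcritgroups}). Your argument is the standard Cauchy--Binet proof, and the details check out: the factorization $\Delta(G)=DD^{t}$ does require looplessness, exactly as you say; deleting the row indexed by $q$ gives $\Delta^{q}(G)=D_qD_q^{t}$; Cauchy--Binet applies since connectivity forces $|E|\geq |V|-1$; and your key lemma is handled correctly in both cases --- a cycle in $S$ gives a signed $\pm1$ dependence among columns, and an acyclic $S$ of size $|V|-1$ is necessarily a spanning tree, for which the leaf-expansion induction (always choosing a leaf distinct from $q$, which exists since a tree on at least two vertices has at least two leaves) pins the minor to $\pm1$, with the empty matrix giving the base case. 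One small remark: the step ``$|S|=|V|-1$ and $S$ acyclic implies $S$ is a spanning tree'' does not actually need connectivity of $G$ --- an acyclic graph on $|V|$ vertices with $|V|-1$ edges has exactly one component --- so that hypothesis is doing no work at that point; it is needed only to ensure the theorem's conclusion is nonvacuous (a disconnected graph has no spanning trees and, consistently, determinant zero). It is also worth noting that your route via total unimodularity of the incidence matrix resonates with the paper's broader theme, where principal unimodularity of the matrices $A(\vec{\bG},T)$ plays the analogous role in the Matrix--Quasi-tree Theorem (Theorem~\ref{basdh}).
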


The following theorem is well-known and gives an alternative way of defining the classical critical group. A proof can be found in the book~\cite{Chris+Gordon}. 
\begin{theorem}\label{thm:classicalcritgroups}
Let $G=(V,E)$ be a connected, loopless abstract graph and let $\vec G$ be obtained from $G$ by giving a direction to each of its edges. Let $T$ be the edge set of a spanning tree of $G$ and $q$ be a vertex of $G$. Then
\[   \ints^{|V|-1} / \langle \Delta^q(G) \rangle \cong \ints^{|E|} / \langle M(\vec G,T) \rangle.\] 
\end{theorem}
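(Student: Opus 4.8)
The plan is to identify both cokernels with the single group $\ints^{E}/(\mathcal B\oplus\mathcal Z)$, where $\mathcal B$ and $\mathcal Z$ are the cut and cycle lattices of $\vec G$. Let $D$ be the signed vertex--edge incidence matrix of $\vec G$, with rows indexed by $V$ and columns by $E$, so that the column indexed by a non-loop edge $e$ has entry $+1$ in the row of its head, $-1$ in the row of its tail, and $0$ elsewhere. Let $Q$ be the reduced incidence matrix obtained by deleting the row indexed by $q$. A direct computation gives $\Delta^q(G)=QQ^{\mathsf T}$, and since $G$ is connected $Q$ has full row rank $|V|-1$. Set $\mathcal Z:=\ker_{\ints}Q$ (the \emph{cycle lattice}) and $\mathcal B:=\im_{\ints}Q^{\mathsf T}$ (the \emph{cut lattice}). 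These are orthogonal, since $\langle Q^{\mathsf T}y,z\rangle=y^{\mathsf T}Qz=0$ for $z\in\mathcal Z$, so $\mathcal B\cap\mathcal Z=0$ and $\mathcal B\oplus\mathcal Z$ is a finite-index sublattice of $\ints^{E}$.

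First I would show that the integer row space of $M(\vec G,T)$ equals $\mathcal B\oplus\mathcal Z$. Each fundamental cycle vector (a row indexed by $e\in E-T$) lies in $\mathcal Z$ and each fundamental cocycle vector (a row indexed by $e\in T$) lies in $\mathcal B$. Ordering rows and columns as $T$ followed by $E-T$, the defining property that $C(T,e)$ meets $E-T$ only in $e$ and $C^{\ast}(T,e)$ meets $T$ only in $e$, each with coefficient $+1$, means that $M(\vec G,T)$ has identity matrices in its two diagonal blocks. From this block structure one checks that the fundamental cycle vectors form an integral basis of $\mathcal Z$ and the fundamental cocycle vectors an integral basis of $\mathcal B$: subtracting the appropriate integer combination of these rows from a given element of $\mathcal Z$, respectively $\mathcal B$, zeroes out its coordinates on $E-T$, respectively on $T$; and an element of $\mathcal Z$ vanishing on $E-T$ (a flow supported on the forest $T$), or of $\mathcal B$ vanishing on $T$ (here $Q_T^{\mathsf T}y=0$ forces $y=0$ as $Q_T$ is invertible), is necessarily zero. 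Hence $\langle M(\vec G,T)\rangle=\mathcal B\oplus\mathcal Z$ and $\ints^{|E|}/\langle M(\vec G,T)\rangle=\ints^{E}/(\mathcal B\oplus\mathcal Z)$.

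The crux is to identify $\ints^{|V|-1}/\langle\Delta^q(G)\rangle$ with the same group. Here I would use that $Q$ is totally unimodular and that, because $G$ is connected, the columns of $Q$ indexed by any spanning tree form a unimodular square submatrix. Consequently $Q\colon\ints^{E}\to\ints^{|V|-1}$ is a split surjection with kernel exactly $\mathcal Z$, and so induces an isomorphism $\overline Q\colon\ints^{E}/\mathcal Z\xrightarrow{\ \sim\ }\ints^{|V|-1}$ satisfying $\overline Q\circ\pi=Q$, where $\pi\colon\ints^{E}\to\ints^{E}/\mathcal Z$ is the quotient map. Now consider $\psi:=\pi\circ Q^{\mathsf T}\colon\ints^{|V|-1}\to\ints^{E}/\mathcal Z$. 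Its image is $(\mathcal B\oplus\mathcal Z)/\mathcal Z$, so its cokernel is $\ints^{E}/(\mathcal B\oplus\mathcal Z)$; on the other hand $\overline Q\circ\psi=QQ^{\mathsf T}=\Delta^q(G)$. Since $\overline Q$ is an isomorphism, it carries the cokernel of $\psi$ isomorphically onto the cokernel of $\Delta^q(G)$, which, as $\Delta^q(G)$ is symmetric so that its row and column spans agree, is exactly $\ints^{|V|-1}/\langle\Delta^q(G)\rangle$. Combining this with the previous paragraph gives the stated isomorphism.

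I expect the main obstacle to be the integrality statements rather than the linear algebra: namely, verifying that the fundamental cycles and cocycles give \emph{integral} bases of $\mathcal Z$ and $\mathcal B$, and that $Q$ is surjective over $\ints$ with saturated kernel. Both rest on the total unimodularity of the incidence matrix and the bijection between spanning trees and unimodular maximal minors. Once these are in place, the factorization $\Delta^q(G)=\overline Q\circ\psi$ makes the comparison of the two cokernels essentially immediate.
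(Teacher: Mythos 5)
Your proof is correct. Note that the paper does not actually prove Theorem~\ref{thm:classicalcritgroups}: it cites the book of Godsil and Royle for it, and your argument is essentially the standard treatment found there --- identifying both cokernels with $\ints^{E}/(\mathcal{B}\oplus\mathcal{Z})$, where $\mathcal{B}$ and $\mathcal{Z}$ are the cut and cycle (flow) lattices. The key verifications all hold: the block-identity structure of $M(\vec G,T)$ (which the paper itself records later, before Proposition~\ref{prop:criel}) does show that the fundamental cycle and cocycle vectors are integral bases of $\mathcal{Z}$ and $\mathcal{B}$ respectively; unimodularity of the tree submatrix $Q_T$ gives both the injectivity step for $\mathcal{B}$ and the surjectivity of $Q$ over $\ints$; the factorization $\Delta^q(G)=\overline{Q}\circ\psi$ then transports the cokernel correctly; and you rightly invoke the symmetry of $\Delta^q(G)$ to reconcile the column-span cokernel with the paper's row-span convention for $\langle\,\cdot\,\rangle$.
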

Thus we may also define the critical group using the reduced Laplacian. The restriction that $G$ be loopless is not significant, because if $G'$ is obtained from $G$ by deleting a loop, then the critical groups of $G$ and $G'$ are isomorphic. The critical group of a disconnected graph is defined to be
 the direct sum of the critical groups of its connected components.

Applying the Matrix--Tree Theorem and Theorem~\ref{thm:classicalcritgroups} gives the following important property of the group.
\begin{theorem}\label{classicalorder}
Let $G$ be a connected, loopless, abstract graph. Then the order of its critical group $K(G)$ is the number of spanning trees of $G$.
\end{theorem}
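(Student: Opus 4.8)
The plan is to combine the two theorems that immediately precede this statement, namely the Matrix--Tree Theorem (Theorem~\ref{kmt}) and the isomorphism of Theorem~\ref{thm:classicalcritgroups}, and to reduce the computation of the group order to a determinant via the Smith normal form. First I would invoke Theorem~\ref{thm:classicalcritgroups} to replace the critical group, as presented by the cycle--cocycle matrix, with the isomorphic group $\ints^{|V|-1}/\langle \Delta^q(G)\rangle$, the cokernel of a reduced Laplacian $\Delta^q(G)$. This moves the problem from an $|E|\times|E|$ matrix to an $(|V|-1)\times(|V|-1)$ matrix, whose order is far easier to control.

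The key step is then to show that the order of the cokernel of a square integer matrix equals the absolute value of its determinant, \emph{provided} that determinant is nonzero. The standard tool is the Smith normal form: any integer matrix $A$ can be written as $A = U D V$, where $U$ and $V$ are unimodular (hence preserve both the determinant up to sign and the lattice $\ints^m$) and $D = \diag(d_1,\dots,d_m)$ is diagonal. Since $U$ and $V$ induce automorphisms of $\ints^m$, the cokernel $\ints^m/\langle A\rangle$ is isomorphic to $\bigoplus_i \ints/d_i\ints$, whose order is $\prod_i |d_i| = |\det D| = |\det A|$ when no $d_i$ vanishes, and is infinite otherwise. Applying this with $A = \Delta^q(G)$ gives that $|K(G)| = |\det(\Delta^q(G))|$.

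To close the argument I would invoke Theorem~\ref{kmt}, which states that $\det(\Delta^q(G))$ equals the number of spanning trees of $G$; in particular this number is a nonnegative integer, so the cokernel is finite and the absolute value is unnecessary. For $G$ connected and loopless the number of spanning trees is strictly positive, which simultaneously confirms that $\det(\Delta^q(G))\neq 0$ and licenses the use of the finite-order formula from the previous step. Chaining these gives $|K(G)| = |\det(\Delta^q(G))| = \#\{\text{spanning trees of }G\}$, as claimed.

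The main obstacle is the fine print rather than the overall strategy: one must be careful that the Smith normal form argument yields the group \emph{order} only when $\det A \neq 0$, since a vanishing determinant produces a free summand and an infinite group. Here this is handled for free because a connected graph has at least one spanning tree, so Theorem~\ref{kmt} guarantees a nonzero (indeed positive) determinant. A secondary point worth a sentence is that the statement presupposes $K(G)$ is well-defined, i.e.\ independent of the choice of $\vec G$, $T$, and $q$; this is asserted in the discussion following Theorem~\ref{thm:classicalcritgroups}, so I would simply cite it rather than reprove it.
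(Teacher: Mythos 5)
Your proof is correct and follows essentially the same route as the paper, which derives this statement in one line by combining Theorem~\ref{thm:classicalcritgroups} with the Matrix--Tree Theorem (Theorem~\ref{kmt}), relying on the same Smith Normal Form fact (stated in the paper's preliminaries on matrices) that a square integer matrix with nonzero determinant has cokernel of order equal to the absolute value of its determinant. Your additional care about the nonvanishing of the determinant, guaranteed by connectivity, is exactly the implicit fine print in the paper's argument.
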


\subsection{Combinatorial maps}

We are interested in \emph{embedded graphs} or \emph{topological maps} which here are graphs embedded in oriented surfaces.  These can be thought of as graphs drawn on a closed oriented surface in such a way that their edges do not intersect (except where edge ends share a  vertex).
 All our embeddings are cellular, meaning that faces are homeomorphic to discs, and generally  our graphs are  connected (although our results   extend easily to  disconnected graphs).  
 The \emph{genus} of an embedded graph is the genus of the surface it is embedded in. An embedded graph is \emph{plane} if it has genus zero.

We shall realise embedded graphs here as combinatorial maps. (Of course, everything we do here can be formulated in terms of any description of an embedded graph.) A \emph{combinatorial map}, whose name we shorten here to just \emph{map}, consists of a finite set $D$ whose elements are called \emph{darts} and a triple of permutations $[\sigma, \alpha, \phi]$ on $D$ such that (i) $\alpha$ is a fixed-point-free involution, and (ii) $\phi=\sigma^{-1}\alpha^{-1}$.  
Any singleton cycles in $\sigma$ and $\phi$ are not suppressed, so that their common domain is explicitly the set of darts. 
We shall generally write a map as just a triple $[\sigma, \alpha, \phi]$ omitting the set $D$ of darts, which is implicit, from the notation. 
The cycles in $\sigma$ are called the \emph{vertices} of the map, the cycles in $\alpha$ are the \emph{edges}, and the cycles in $\phi$ are the \emph{faces}. 
The \emph{genus}, denoted by $g(\bG)$, of a connected map $\bG$ is given by 
\begin{equation}\label{eqn:genus} g(\bG)=\tfrac{1}{2}(2-v+e-f),\end{equation} where $v$, $e$ and $f$ are the numbers of vertices, edges and faces of $\bG$. A connected map is \emph{plane} if it has genus zero. 

We say that a map $[\sigma,\alpha,\phi]$ is a \emph{bouquet} if it has only one vertex or equivalently if $\sigma$ is cyclic, and that it is a \emph{quasi-tree} if it has only one face or equivalently if $\phi$ is cyclic.
 A map is \emph{connected} if whenever we partition the cycles of $\sigma$ (or $\phi$) into two non-empty, disjoint collections $C_1$ and $C_2$ then there is an edge, equivalently a cycle of $\alpha$, with one dart belonging to a cycle in $C_1$ and the other belonging to a cycle in $C_2$.

We need to allow the possibility that the set of darts is empty. In this case each of $\sigma$, $\alpha$ and $\phi$ is defined to be the identity permutation with domain equal to the empty set. We stipulate that $\sigma$ and $\phi$ both comprise one empty cycle and that $\alpha$ is the empty permutation with no cycles. (This is to ensure that, in what is to follow, the number of cycles of $\sigma$, $\alpha$ and $\phi$ correctly record the number of vertices, edges and faces of an embedded graph consisting of one isolated vertex.)

\medskip

\begin{figure}
\begin{center}
  \labellist
\small\hair 2pt 
\pinlabel {$d$} at    62 34
\pinlabel {$\alpha(d)$} at  87 21 
\pinlabel {$\sigma(d)$} at  32 43
\pinlabel {$\phi(d)$} at  97 43
\endlabellist
 \includegraphics[scale=1.2]{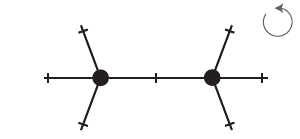} 
\end{center}
\caption{Moving between maps and embedded graphs.}
\label{fig:move}
\end{figure}

Maps and embedded graphs correspond. For convenience we provide a brief overview of this connection, referring the reader to the book~\cite{zbMATH02019766} for additional details. The reader may find it helpful to consult Figure~\ref{fig:move} and  Example~\ref{basicexamp1} which illustrate the concepts of this subsection. 
Given a connected embedded graph $\embG$ (recall the graph is in an oriented surface), we think of each edge as comprising two \emph{half-edges} or \emph{darts}. Let $D$ denote the set of darts.
In order to define the triple $[\sigma, \alpha, \phi]$, we imagine the name or label of each dart being placed adjacent to the dart in the face $f$ meeting the dart determined as follows. Suppose that dart $d$ meets the vertex $v$. 
Then the face $f$ is the first face encountered when rotating around $v$ in the direction consistent with the orientation of the surface   starting at $d$. 
Suppose now that $\embG$ has at least one edge.
The permutation $\alpha$ is the product of disjoint $2$-cycles, one for each edge of $\embG$, so that the $2$-cycle corresponding to an edge $e$ comprises the two darts associated with $e$. 
The permutation $\sigma$ is the product of disjoint cycles, one for each vertex of $\embG$. The cycle corresponding to $v$ contains the darts adjacent with $v$ in the cyclic order in which they are encountered when rotating around $v$ in the direction consistent with the orientation of the surface.  
The permutation $\phi$ is the product of disjoint cycles, one for each face of $\embG$. The cycle corresponding to $f$ contains the labels of darts lying just within $f$
in the cyclic order in which they are encountered when walking just inside the boundary of $f$ in the direction consistent with the orientation of the surface. Recall that any singleton cycles in $\sigma$ and $\phi$ are not suppressed, so the numbers of disjoint cycles of $\sigma$ and $\phi$ are the numbers of vertices and faces of $\embG$ respectively. It is easy to check that the resulting map is connected. 
The case when $\embG$ has no edges is special and is described by the map in which  $\sigma$, $\alpha$, and $\phi$ are all identity permutations. 
Notice that in either case $\phi=\sigma^{-1}\alpha^{-1}$. 
This process described is reversible  (see, for example,~\cite{zbMATH02019766} for details) and every connected map determines a connected embedded graph. 

We say that two maps $\bG=[\sigma, \alpha, \phi]$ and $\bG'=[\sigma', \alpha', \phi']$ are \emph{isomorphic} if there is a bijection $\beta$ from the set of darts of $\bG$ to the set of darts of $\bG'$, so that $\beta$ sends $\sigma$, $\alpha$ and $\phi$ to $\sigma'$, $\alpha'$ and $\phi'$, respectively. Then two connected embedded graphs are isomorphic if and only if their corresponding maps are. By Euler's Formula the definitions for the genus of a map and embedded graph agree. 

More generally, if $\embG$ is an embedded graph which is not necessarily connected, then 
each of the permutations $\sigma$, $\alpha$ and $\phi$ is found by determining its values on the connected components of $\embG$ and taking their composition.  
It is no longer the case that the process is completely reversible: the addition or removal of isolated vertices from $\embG$ makes no difference to any of the permutations\footnote{One could keep track of isolated vertices by allowing $\sigma$ and $\phi$ to have a number of empty cycles, one for each isolated vertex. For simplicity, other than the map with one vertex and no edges, we do not allow our maps to have isolated vertices.}
It is, however, possible to recover the components of $\embG$ that are not isolated vertices from its description as a map.

\medskip

Occasionally, we make use of rotation systems, as these provide a simple visual description of maps, suitable for figures. We now explain the straightforward equivalence with maps.

As $\phi=\sigma^{-1}\alpha^{-1}$, a  map is completely determined by $\sigma$ and $\alpha$. It follows that a  map, and therefore an embedded graph,  may also be described by  
a \emph{rotation system}, that is, a graph together with a cyclic order of the half-edges at each vertex: each cycle in $\sigma$ gives the vertices together with their cyclically ordered incident half-edges, and each cycle in $\alpha$ specifies which half-edges form an edge. Alternatively, a rotation system can be read from an embedded graph by equipping its underlying graph with the cyclic order of half-edges induced by the surface orientation.
Additional background on rotation systems can be found in the book~\cite{zbMATH01624430}.

For a map $\bG$, we define its \emph{underlying abstract graph} to be the abstract graph formed from $\bG$ by disregarding all the topological information. More precisely, if $\bG=[\sigma,\alpha,\phi]$, then the underlying abstract graph has one vertex for each cycle of $\sigma$ and one edge for each cycle of $\alpha$. The endpoints of an edge are the vertices corresponding to the cycles of $\sigma$ containing its constituent darts.

\medskip

Much of the time we will work with directed  maps. 
We note that to avoid confusion the term ``orientation'' is used in this paper  exclusively in the topological sense (e.g, a surface orientation) and never in the directed graph sense (i.e., a directed graph that arises by directing each edge of a graph).   
A \emph{directed map} consists of a map $[\sigma, \alpha, \phi]$ in which one element in each cycle of $\alpha$ is assigned to be a \emph{head} and the other element a \emph{tail}.
It is clear that directed maps correspond to directed embedded graphs and to directed rotation systems (each defined by assigning a direction to each edge of the graph to turn it into a directed graph) and the darts are assigned to be heads or tails in the obvious way. We can \emph{reverse} the direction of an edge by interchanging which darts in its cycle are assigned as the head and tail.
Every directed map $\vec{\bG}$ can be considered as a (non-directed) map $\bG$. We refer to such a $\bG$ as the \emph{underlying map} of $\vec{\bG}$, and say that $\vec{\bG}$ is obtained by \emph{directing the edges} of $\bG$.
 Every directed map also has an underlying abstract directed graph.

To simplify notation we shall use the following convention when working with directed maps.
\begin{convention}\label{con1}
In a directed map we assume the darts of the edge $e$ are $e^+$ and $e^-$ with $e^+$ labelling the head of the edge and $e^-$ labelling the tail.
\end{convention}
We extend this convention to directed embedded graphs and directed rotation systems.

\bigskip 

If $\embG$ is an embedded graph then its \emph{geometric dual} $\embG^*$ is the embedded graph whose vertices are obtained through the standard construction of placing one vertex in each face of $\embG$. For each edge $e$ of $\embG$, there is a corresponding edge in $\embG^*$ crossing $e$ exactly once and with ends on the dual vertices in the face or faces adjacent to $e$ in $\embG$. 
In terms of  maps, the geometric dual of $\bG=[\sigma,\alpha, \phi]$ is
\[\bG^*:= [\phi^{-1},\,\alpha,\, \sigma^{-1}]  = [\alpha \sigma,\, \alpha, \,\sigma^{-1}] .\] 
Again we refer the reader to~\cite{zbMATH02019766} for details.

We shall make use of S.~Chmutov's surprising construction of partial duals introduced in \cite{zbMATH05569114} and first described for maps in~\cite{pdhyper}. 
 Partial duality enables the formation of the geometric dual one edge at a time, or more generally the formation of the geometric dual with respect to only a subset of edges.   Additional background on partial duals can be found in the book~\cite{MR3086663}.

Given a  map $\bG=[\sigma,\alpha, \phi]$ and a set $A$ of its edges, let $\alpha^A$ denote the permutation so that $\alpha^A(d)=\alpha(d)$ if $d$ is a dart associated with an edge in $A$ and $\alpha^A(d)=d$ if $d$ is a dart that is associated with an edge that is not in $A$. With a slight abuse of terminology, $\alpha^A$ is the restriction of $\alpha$ to darts associated with edges in $A$. Then the \emph{partial dual} $\bG^A$ is defined to be 
\[  \bG^A:= [\alpha^A\sigma, \,\alpha,\, \sigma^{-1}\alpha^{A^c}],\]
where $A^c$ denotes the complement of $A$. 
Notice that  \[ (\alpha^A \sigma)^{-1} \alpha^{-1} = \sigma^{-1} (\alpha^A)^{-1} \alpha=\sigma^{-1}\alpha^{A^c},\] so $\bG^A$ is indeed a map. 
Also notice that the operation of forming a partial dual is an involution, and that $\bG^E = \bG^*$, where $E$ is the set of all edges of $\bG$.
The definition of a partial dual also applies to directed maps.

Figure~\ref{figpd} shows partial duality as a local operation on directed rotation systems. Here the partial dual $\vec{\bG}^{\{e\}}$ is obtained by modifying a directed rotation system $\vec{\bG}$ locally at the edge $e$: from left to right if $e$ is not a loop, from right to left if $e$ is a loop.  The partial dual $\vec{\bG}^A$ is obtained by applying this operation to each edge in $A$ in any order. (Described in this way it is not immediately obvious that the resulting directed rotation system is independent of this choice of order, but this is clear from the definition in terms of maps.)

\medskip

\begin{figure}
\begin{center}
\begin{tabular}{ccc}
  \labellist
\small\hair 2pt 
\pinlabel {$e^-$} at    42 42
\pinlabel {$e^+$} at    65 31
\pinlabel {$x$} at   9 47
\pinlabel {$y$} at   8 31
\pinlabel {$z$} at  17 22
\pinlabel {$a$} at  96 28
\pinlabel {$b$} at  99 41
\pinlabel {$c$} at  91 52
\endlabellist
 \includegraphics[scale=1.2]{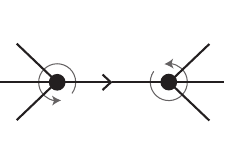} 
&
\quad\raisebox{14mm}{$\xleftrightarrow[\text{edge $e$}]{\text{~ partial dual ~}}$}\quad
&
  \labellist
\small\hair 2pt
\pinlabel {$e^-$} at    33 62
\pinlabel {$e^+$} at    43.5 19 
\pinlabel {$x$} at   19 46
\pinlabel {$y$} at   18 31
\pinlabel {$z$} at  27 22
\pinlabel {$a$} at  55 23
\pinlabel {$b$} at  57 41
\pinlabel {$c$} at  54 59
\endlabellist
 \includegraphics[scale=1.2]{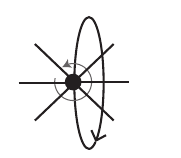} 
\end{tabular}

\end{center}
\caption{The partial dual of an edge in a directed rotation system.}
\label{figpd}
\end{figure}

Given a map $\bG=[\sigma,\alpha,\phi]$ with edge set $E$, we say that 
$\bG'=[\sigma',\alpha',\phi']$ is an \emph{induced submap} of $\bG$ if $\bG'$ is a map and there is a subset $A$ of $E$ so that the following hold:
\begin{enumerate}
\item if $A=\emptyset$, then both $\sigma'$ and $\phi'$ comprise a single empty cycle and $\alpha'$ has no cycles;
\item if $A\ne \emptyset$, then both $\sigma'$ and $\alpha'$ are obtained from $\sigma$ and $\alpha$ respectively by deleting all the darts coming from edges not in $A$ and removing any empty cycles created and $\phi'=(\sigma')^{-1} (\alpha')^{-1}$.     
\end{enumerate}
We write $\bG'=\bG[A]$ and say that $A$ \emph{induces} $\bG'$. 

Recall that an embedded graph is a quasi-tree if it has only one face, and, 
correspondingly, a  map  
$\bG=[\sigma,\alpha,\phi]$ is a \emph{quasi-tree} if $\phi$ is cyclic. (Notice that if $\phi$ is cyclic then $\bG$ is connected.) 
The \emph{spanning quasi-trees} of a map 
$[\sigma,\alpha,\phi]$
are its induced submaps $[\sigma',\alpha',\phi']$ which are quasi-trees with $\sigma'$ having the same number of cycles as $\sigma$.
This notion corresponds precisely with the analogous situation in embedded graphs. In particular, only connected maps have spanning quasi-trees. 

The following proposition records two standard and well-known properties of partial duality and spanning quasi-trees. (The first item appears to have been first  written down in~\cite{zbMATH05929399}, the second  in~\cite{zbMATH07094555}.)
Before stating the proposition, we recall that $X \bigtriangleup Y$ denotes the \emph{symmetric difference} $(X\cup Y) - (X\cap Y)$ of two sets $X$ and $Y$.

\begin{proposition}\label{prop:onevertex}
Let $\bG=[\sigma,\alpha,\phi]$ be a map and let $A$ and $T$ be two 
subsets of its edges. Then the following hold. 
\begin{enumerate}
\item $\bG[T]$ is a spanning quasi-tree of $\bG$ if and only if $\alpha^T\sigma$ is cyclic.
\item  $\bG[T]$ is a spanning quasi-tree of $\bG$ if and only if $\bG^A[T \bigtriangleup A]$ is a spanning quasi-tree of $\bG^A$.
\end{enumerate}
\end{proposition}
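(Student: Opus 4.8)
The plan is to reduce both parts to a single cycle‑counting identity. For a permutation $\pi$ write $c(\pi)$ for its number of cycles, set $v=c(\sigma)$, and let $D_T$ denote the set of darts of edges in $T$. Writing $\bG[T]=[\sigma',\alpha',\phi']$, the heart of the argument is the identity
\[
 c(\alpha^T\sigma) = c(\phi') + v - c(\sigma'), \qquad (\star)
\]
valid for all $T$ (with the convention $c(\sigma')=c(\phi')=1$ when $T=\emptyset$). Granting $(\star)$, part~(1) is immediate: we have $c(\phi')\ge 1$, and since $\sigma'$ is obtained from $\sigma$ by deleting darts and empty cycles we have $c(\sigma')\le v$, so the two summands on the right are bounded below by $1$ and $0$ respectively. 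Hence $c(\alpha^T\sigma)=1$ if and only if simultaneously $c(\phi')=1$ and $c(\sigma')=v$. This pair of conditions is exactly the requirement that $\bG[T]$ be a quasi-tree ($\phi'$ cyclic) that is spanning ($\sigma'$ has as many cycles as $\sigma$), so $\alpha^T\sigma$ is cyclic if and only if $\bG[T]$ is a spanning quasi-tree.

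To prove $(\star)$ I would study the first-return (induced) permutation of $\alpha^T\sigma$ on the subset $D_T$. The first step is to check, by tracing an orbit, that this induced permutation is precisely $\alpha'\sigma'=(\phi')^{-1}$: starting from $d\in D_T$ and iterating $\alpha^T\sigma$, the darts of $D\setminus D_T$ are fixed by the $\alpha^T$ factor, so one advances by $\sigma$ until the first return $\sigma'(d)\in D_T$ and then crosses its edge via $\alpha$. The second step is the elementary fact that the first-return map of a finite permutation $\pi$ on a subset $Y$ contributes exactly one cycle for each cycle of $\pi$ meeting $Y$; hence $c(\phi')=c((\phi')^{-1})$ equals $c(\alpha^T\sigma)$ minus the number of cycles of $\alpha^T\sigma$ lying entirely in $D\setminus D_T$. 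The final step identifies these lost cycles: on $D\setminus D_T$ the factor $\alpha^T$ is the identity, so a cycle of $\alpha^T\sigma$ inside $D\setminus D_T$ is exactly a cycle of $\sigma$ inside $D\setminus D_T$, that is, a vertex incident to no edge of $T$; there are $v-c(\sigma')$ of these, which gives $(\star)$. I expect this identification of the induced permutation, together with the bookkeeping of the lost cycles, to be the main obstacle, since it is where the definition of the induced submap (deletion of darts and of empty cycles) must be reconciled carefully with the combinatorics of $\alpha^T\sigma$; the degenerate case $T=\emptyset$ should be dispatched separately and directly.

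Finally, part~(2) follows formally from part~(1) once one records the involution identity
\[
 \alpha^{T\bigtriangleup A}\,\alpha^{A} = \alpha^{T},
\]
which is verified by checking a dart of an edge $e$ in each of the four cases according to whether $e$ lies in $A$ and in $T$. The partial dual $\bG^A$ is a map with vertex permutation $\alpha^A\sigma$ and the same edge involution $\alpha$, so applying part~(1) to $\bG^A$ with edge set $T\bigtriangleup A$ shows that $\bG^A[T\bigtriangleup A]$ is a spanning quasi-tree of $\bG^A$ if and only if $\alpha^{T\bigtriangleup A}(\alpha^A\sigma)$ is cyclic. By the displayed identity this permutation equals $\alpha^T\sigma$, whose cyclicity characterises the spanning quasi-trees of $\bG$ by part~(1). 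Hence the two conditions coincide, which proves part~(2).
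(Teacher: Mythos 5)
Your proof is correct, and its engine is the same as the paper's: the observation that following $\alpha^T\sigma$ from a dart of an edge of $T$ and recording first returns to the darts of $T$ produces exactly $\alpha'\sigma'=(\phi')^{-1}$. The packaging, however, is genuinely different. The paper argues qualitatively: it first disposes of the case where $\bG[T]$ has fewer vertices than $\bG$ (there neither condition can hold), and then, in the spanning case, states the equivalence ``$\alpha^T\sigma$ is cyclic if and only if $\alpha'\sigma'$ is cyclic'' as its key observation. You instead prove the quantitative identity $c(\alpha^T\sigma)=c(\phi')+v-c(\sigma')$ (in your notation), which absorbs that case split into a single formula: the cycles of $\alpha^T\sigma$ lost when passing to the first-return permutation are precisely the cycles of $\sigma$ avoiding the darts of $T$, i.e.\ the $v-c(\sigma')$ vertices that the paper handles by a separate case. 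This buys you a uniform treatment of the degenerate conventions and strictly more information --- your identity computes the number of cycles of $\alpha^T\sigma$ (equivalently, the number of vertices of the partial dual $\bG^T$) for every $T$, not merely a characterisation of when that number is one. One small point of care in your step identifying the lost cycles: the justification is not only that $\alpha^T$ fixes $D\setminus D_T$ pointwise, but also that $\alpha^T$ preserves $D_T$, so a cycle of $\alpha^T\sigma$ avoiding $D_T$ can never have $\sigma$ land in $D_T$; your argument is right but this half of the reasoning deserves to be said. Finally, for part (2) the paper says only that it ``follows easily from the first''; your explicit route via $\alpha^{T\bigtriangleup A}\alpha^{A}=\alpha^{T}$ together with the fact that $\bG^A$ has vertex permutation $\alpha^A\sigma$ and the same edge involution $\alpha$ is exactly the intended argument, spelled out in full.
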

As this result is central to our work and the proofs in the literature are not in terms of maps, we sketch a proof for completeness.
\begin{proof}
If $T=\emptyset$, then the first part follows immediately from the definition, so we shall assume that $T\ne \emptyset$.
If $\bG[T]$ has fewer vertices than $\bG$, then it is not a spanning quasi-tree of $\bG$. Moreover $\alpha^T\sigma$ cannot be cyclic, as $\sigma$ has at least two cycles and there is a least one cycle of $\sigma$ not containing any of the darts associated with edges of $T$. So we may assume that $\bG[T]$ and $\bG$ have the same number of vertices. 
Suppose that $\bG[T]=[\sigma', \alpha',\phi']$.
Note that $\alpha'=\alpha^T$. 
The key observation is that $\alpha^T \sigma$ is cyclic if and only if $\alpha' \sigma'$ is cyclic. 
So $\phi' = (\alpha'\sigma')^{-1}$ is cyclic if and only if $\alpha^T\sigma$ is cyclic.
Thus the first part is true. The second part follows easily from the first.
\end{proof}

Notice that if $\bG$ is a disconnected map, then 
for every subset $T$ of the edges of $\bG$,
$\alpha^T \sigma$ is not cyclic. 

Finally, we note that the notation $\alpha^A$ is convenient for working with directed maps. 
Let $A$ be a subset of the edges of a directed map $G=[\sigma, \alpha, \phi]$, then the map obtained by 
reversing 
each edge in $A$ is $[\alpha^A \sigma \alpha^A, \alpha, \alpha^A\phi \alpha^A]$.

\begin{figure}[!t]
    \centering
        \subfloat[$\bG$.]{
             \labellist
\small\hair 2pt
\pinlabel {$a^-$} at   18 34
\pinlabel {$a^+$} at   94  44
\pinlabel {$b^-$} at     25.5 54
\pinlabel {$b^+$} at   36 27
\pinlabel {$c^-$} at  45 43
\pinlabel {$c^+$} at   70 33
\pinlabel {$d^-$} at  10 48
\pinlabel {$d^+$} at   92 25
\endlabellist
 \includegraphics[scale=1.2]{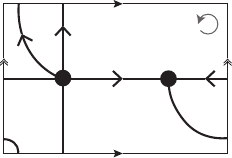} 
        \label{fex1}}
        \qquad
         \subfloat[$\bG$.]{
      \labellist
\small\hair 2pt
\pinlabel {$a$} at   67 13
\pinlabel {$b$} at      8 64 
\pinlabel {$c$} at   60 65
\pinlabel {$d$} at   10 24
\endlabellist
 \includegraphics[scale=1.2]{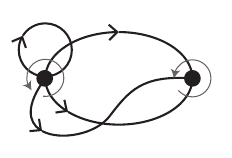} 
          \label{fex2} 
        } 
        
         \subfloat[$\bG^{\{c\}}$.]{
  \labellist
\small\hair 2pt
\pinlabel {$a$} at   9 15
\pinlabel {$b$} at    30 67
\pinlabel {$c$} at    67 47
\pinlabel {$d$} at    17 24
\endlabellist
 \includegraphics[scale=1.2]{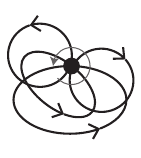} 
        \label{fex3}}
        \quad
         \subfloat[$\bG^{\{a,b,c\}}$.]{
  \labellist
\small\hair 2pt
\pinlabel {$a$} at    13 58
\pinlabel {$b$} at     52 18
\pinlabel {$c$} at     6 16 
\pinlabel {$d$} at     62 64
\endlabellist
 \includegraphics[scale=1.2]{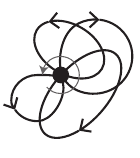} 
        \label{fex4}}
        \quad
 \subfloat[$\vec{M}(\bG)$]{
  \labellist
\small\hair 2pt
\pinlabel {$a$} at   46 69
\pinlabel {$b$} at     68 47 
\pinlabel {$c$} at    30 5  
\pinlabel {$d$} at    6 28
\endlabellist
 \includegraphics[scale=1.2]{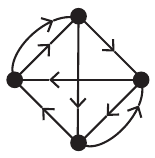} 
        \label{fex5}}
        
     \caption{Maps for Examples~\ref{basicexamp1},~\ref{basicexamp2},~\ref{ex:lap} and~\ref{ex:cf}.}
         \label{fex}
\end{figure}

\begin{example}\label{basicexamp1}
Let $\vec{\bG}=[\sigma,\alpha,\phi]$ be the directed map given by 
\begin{align*}\sigma&=(a^-b^+c^-b^-d^-)(a^+c^+d^+),\\ \alpha&=(a^-a^+)(b^-b^+)(c^-c^+)(d^-d^+), \\   \phi&=(a^- d^+ b^-)(a^+d^-c^+b^+c^-),\end{align*} and let $\bG$ be its underlying map. 
$\vec{\bG}$ is shown as a directed embedded graph (on the net of a torus) in Figure~\ref{fex1} and a directed rotation system in Figure~\ref{fex2}. 
Then $\bG$ has six spanning quasi-trees given by the edge sets
$\{a\},\{c\}, \{d\}, \{a,b,c\}, \{a,c,d\}, \{b,c,d\}$. 
The partial dual $\vec{\bG}^{\{c\}}$ is given by $[\alpha^{\{c\}}\sigma, \,\alpha,\, \sigma^{-1}\alpha^{\{a,b,d\}}],$ where 
\[\alpha^{\{c\}} \sigma= (a^- b^+ c^+ d^+ a^+ c^- b^- d^-).\]
Similarly, the partial dual $\vec{\bG}^{\{a,b,c\}}$ is given by $[\alpha^{\{a,b,c\}}\sigma, \,\alpha,\, \sigma^{-1}\alpha^{\{d\}}],$ where 
\[\alpha^{\{a,b,c\}} \sigma= (a^- b^- d^- a^+ c^- b^+ c^+ d^+).\]
The rotation systems for  $\vec{\bG}^{\{c\}}$ and $\vec{\bG}^{\{a,b,c\}}$ are shown in Figures~\ref{fex3} and~\ref{fex4}. 
Note that they are both bouquets since $\{c\}$ and $\{a,b,c\}$ are the edge sets of spanning quasi-trees of $\bG$. 
 (Figure~\ref{fex5} will be used in Example~\ref{ex:lap}.)
\end{example}

\subsection{Matrices}\label{danjkj}

Matrices here are over $\mathbb{R}$ with rows and columns indexed, in the same order, by some set $E$.    
For $X,Y\subseteq E$,  we let $A[X,Y]$ submatrix given by the rows indexed by $X$ and columns indexed by $Y$. 
We let $A[X]$ denote the \emph{principal submatrix} $A[X,X]$. The determinant of a principal submatrix is called a \emph{principal minor}. 
By convention $\det (A[\emptyset])=1$. 

A square matrix $A$ is \emph{skew-symmetric} if $A^t=-A$. (In this paper we always use a lower case $t$ in the exponent to denote the transpose of a matrix and  use an upper case $T$ in the exponent in relation to partial duality.)
It is \emph{unimodular} if it has determinant $1$ or $-1$, and  is \emph{principally unimodular (PU)} if  every nonsingular principal submatrix is unimodular.
It is a well-known result of Cayley that the determinant of an even order skew-symmetric matrix is the square of its Pfaffian. Thus an even order skew-symmetric and unimodular matrix must have determinant zero or one. Showing that an odd order skew-symmetric matrix must be singular is a standard exercise in matrix theory. We shall need these facts later.

We shall also make use of the Smith Normal Form of a matrix. We review a few relevant facts about it here and refer the reader to~\cite{MR3534076} for further background.
Let $A$ be an $m\times m$ integer matrix. Then there exist two $m\times m$ integer matrices $P$ and $Q$ that are invertible over $\mathbb{Z}$ and are such that $PAQ$ is a matrix $\diag(b_1, \ldots, b_r, 0, \ldots, 0)$,
  where $b_i$ is a strictly positive integer for all $i$ and $b_i$
divides $b_{i+1}$ for $1\leq i\leq r-1$. The matrix $B:=\diag(b_1, \ldots, b_r, 0, \ldots, 0)$ is the \emph{Smith Normal Form} of $A$. The integers $b_i$ are called the \emph{invariant factors} and are uniquely determined by $A$. Consequently pre- or post-multiplying $A$ by a unimodular integer matrix does not alter its Smith Normal Form. Notice that 
$\det (A)=\pm \det (B)$, so that, up to sign, $\det (A)$ is equal to the product of its invariant factors and when $A$ is nonsingular, the number of
invariant factors equals $m$.
The cokernel of $A$, $\mathbb{Z}^m/\langle A\rangle$, satisfies
\[
 \mathbb{Z}^m/\langle A\rangle  \cong \mathbb{Z}^{m-r}
 				\oplus \mathbb{Z}/b_1\mathbb{Z}
 				\oplus \cdots \oplus \mathbb{Z}/b_r\mathbb{Z}.
\]
It follows that if $A$ is invertible over $\reals$, then
\[ |\ints^m /\langle A\rangle| = |{\det (A)}|.\]
We now consider the case that the sum of the rows of $A$ and the sum of the columns of $A$ are both $0$, and $A$ has rank $m-1$ over the reals. 

Let $\hat A$ denote a matrix obtained from $A$ by deleting any row and any column. Then 
\begin{equation}\label{eq:lappedby}  |(\ints^{m}\cap \mathbf{1}^{\perp}) /\langle A \rangle|  = |\ints^{m-1}/\langle \hat A \rangle |  = |{\det (\hat A)}|,\end{equation}
where $\ints^{m}\cap \mathbf{1}^{\perp}$ is the subgroup of $\ints^m$ comprising $m$-tuples in which the entries sum to $0$.
Moreover,
\begin{equation}\label{eq:lapdance} \ints^m/\langle A \rangle  \cong \ints \oplus (\ints^{m}\cap \mathbf{1}^{\perp}) /\langle A \rangle \cong \ints \oplus \ints^{m-1}/\langle \hat A \rangle.\end{equation}
Hence, by Theorem~\ref{thm:classicalcritgroups}, we have
\[K(G)\cong  ( \ints^{|V|}\cap \mathbf{1}^{\perp} )/ \langle \Delta(G) \rangle  .\]

\section{The definition of the critical group of a map, and a summary of results}\label{sec:bullets}

In this section we introduce the critical group of a map.  We begin by associating a matrix with a map. This matrix plays a similar role to the signed cycle--cocycle matrix in the classical theory.

Let $\vec{\bG}=[\sigma,\alpha, \phi]$ be a directed, connected map described following Convention~\ref{con1}. Let $T$ be the edge set of a spanning quasi-tree of $\vec{\bG}$.

We are now ready to define a matrix associated with each map, from which we may easily compute the critical group of the map.
\begin{definition}\label{sadbh}
For a directed, connected map $\vec{\bG}=[\sigma,\alpha, \phi]$ with edge set $E$, together with the edge set $T$ of one of its spanning quasi-trees  we define $A(\vec{\bG},T)$ to be the $|E|\times |E|$-matrix whose rows and columns are indexed by the edges in $E$ and whose entries are either 0, 1 or $-1$ according to the following scheme. 
For edges $e$ and $f$, take the cyclic permutation $\alpha^{T}\sigma$ 
and remove all the entries except for the darts $e^-, e^+,f^-,f^+$. 
If the resulting permutation is $(e^+ f^+e^-f^-)$, then set $A_{e,f}=1$, if it is $(f^+e^+f^-e^-)$, then set $A_{e,f}=-1$,
otherwise set $A_{e,f}=0$.

In the case where $\vec{\bG}$ is a bouquet, $A(\vec{\bG})$ denotes the  matrix $A(\vec{\bG},\emptyset)$.
\end{definition}

 Notice that $A(\vec{\bG},T)$ is a skew-symmetric matrix. An example of a matrix $A(\vec{\bG},T)$ can be found in Example~\ref{basicexamp2}, with further examples given in Section~\ref{sec:examples}. 
The matrices $A(\vec{\bG},T)$ can be found in the literature, albeit in different settings. 
We summarise the appearances  in Remark~\ref{gdsag}.

The next two results record two useful facts for reference later. Each follows immediately from the definitions. Note that Proposition~\ref{prop:onevertex} implies that if $T$ is the edge set of a spanning quasi-tree of a map $\vec \bG$, then $\vec \bG^T$ is a bouquet.
\begin{proposition}\label{prop:submap}
Let $\vec \bB$ be a directed bouquet, and  $X$ be a subset of its edges.
Then $A(\vec \bB)[X] = A(\vec \bB[X])$.
\end{proposition}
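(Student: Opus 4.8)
The plan is to prove the identity entrywise, exploiting the fact that both matrices in the statement are read off cyclic permutations by exactly the same local rule of Definition~\ref{sadbh}. Since $\vec{\bB}$ is a bouquet, $\sigma$ is cyclic and, by the convention at the end of Definition~\ref{sadbh}, $A(\vec{\bB})=A(\vec{\bB},\emptyset)$; thus the entry $A(\vec{\bB})_{e,f}$ is determined solely by the restriction of $\alpha^{\emptyset}\sigma=\sigma$ to the four darts $e^-,e^+,f^-,f^+$. The principal submatrix $A(\vec{\bB})[X]$ merely retains these entries for $e,f\in X$, so the task reduces to showing that $A(\vec{\bB}[X])_{e,f}$ is computed from the very same restricted permutation.

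First I would dispose of the degenerate case $X=\emptyset$, in which both $A(\vec{\bB})[\emptyset]$ and $A(\vec{\bB}[\emptyset])$ are the empty matrix. For $X\neq\emptyset$, I would unwind the definition of the induced submap $\vec{\bB}[X]=[\sigma',\alpha',\phi']$: here $\sigma'$ is obtained from $\sigma$ by deleting every dart coming from an edge not in $X$ (and $\alpha'=\alpha^{X}$ on the surviving darts). Because $\sigma$ is a single cycle and at least one dart survives, deleting elements leaves a single cycle on the remaining darts, so $\sigma'$ is again cyclic; hence $\vec{\bB}[X]$ is itself a bouquet and $A(\vec{\bB}[X])=A(\vec{\bB}[X],\emptyset)$ is defined, with its entries read off $\sigma'$.

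The crux is then the transitivity of this ``deletion'' (restriction) operation on cyclic permutations: for $e,f\in X$ all four darts $e^-,e^+,f^-,f^+$ survive the passage from $\sigma$ to $\sigma'$, and restricting $\sigma'$ to these four darts yields exactly the same cyclic order as restricting $\sigma$ to them directly. Consequently the local rule of Definition~\ref{sadbh} returns an identical value $\pm 1$ or $0$ in both computations, giving $A(\vec{\bB}[X])_{e,f}=A(\vec{\bB})_{e,f}$ for all $e,f\in X$, which is the claim.

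I do not expect a genuine obstacle here; the only points requiring care are the bookkeeping that ``delete the darts of non-$X$ edges, then restrict to four darts'' agrees with ``restrict to four darts directly'' — a routine property of induced cyclic orders — together with the observation that $\vec{\bB}[X]$ remains a bouquet, so that the right-hand side is even defined.
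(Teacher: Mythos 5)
Your proof is correct and matches the paper, which states this proposition ``follows immediately from the definitions'' and gives no further argument; your write-up is exactly that definitional verification spelled out (transitivity of restricting a cyclic permutation, plus the observation that $\vec{\bB}[X]$ is again a bouquet so the right-hand side is defined). No gaps.
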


\begin{lemma}\label{lem:dualtree}
Let $\vec \bG$ be a directed, connected map, and $T$ be the edge set of a spanning quasi-tree of $\vec \bG$. Then $A(\vec \bG^T)=A(\vec \bG,T)$. 
\end{lemma}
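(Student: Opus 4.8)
The plan is to observe that both matrices are, by definition, computed from one and the same cyclic permutation, so that the identity reduces to unwinding the definitions. First I would record that since $T$ is the edge set of a spanning quasi-tree of $\vec{\bG}$, the remark preceding Proposition~\ref{prop:submap} (an application of Proposition~\ref{prop:onevertex}) guarantees that $\vec{\bG}^T$ is a bouquet. Hence $A(\vec{\bG}^T)$ is defined and, by the bouquet convention in Definition~\ref{sadbh}, equals $A(\vec{\bG}^T, \emptyset)$.

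Next I would identify the vertex permutation of $\vec{\bG}^T$. From the definition of the partial dual, $\vec{\bG}^T = [\alpha^T\sigma,\, \alpha,\, \sigma^{-1}\alpha^{T^c}]$, so its $\sigma$-permutation is precisely $\alpha^T\sigma$, while its edge involution is still $\alpha$. Crucially, forming a partial dual alters neither the edge permutation $\alpha$ nor the assignment of heads and tails, so for any pair of edges $e, f$ the four darts $e^-, e^+, f^-, f^+$ are literally the same darts, carrying the same directional labels, in $\vec{\bG}$ and in $\vec{\bG}^T$.

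With these observations in place, the two matrices are computed by identical recipes. To read off $A(\vec{\bG}^T,\emptyset)_{e,f}$ one forms $\alpha^{\emptyset}(\alpha^T\sigma) = \alpha^T\sigma$, deletes all darts except $e^-, e^+, f^-, f^+$, and applies the sign rule of Definition~\ref{sadbh} to the resulting $4$-cycle. But the entry $A(\vec{\bG},T)_{e,f}$ is obtained by deleting all darts except $e^-, e^+, f^-, f^+$ from the very same cyclic permutation $\alpha^T\sigma$ and applying the very same rule. Hence the two entries coincide for every pair $e, f$, and so $A(\vec{\bG}^T) = A(\vec{\bG},T)$.

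There is no real obstacle here: the only point requiring care is the verification in the second paragraph that the $\sigma$-permutation of $\vec{\bG}^T$ is exactly $\alpha^T\sigma$ and that passing to the partial dual leaves the darts and their head/tail roles untouched. Once that is confirmed, the equality is immediate, since both sides are the same function of the single permutation $\alpha^T\sigma$.
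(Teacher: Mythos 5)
Your proof is correct and matches the paper's treatment: the paper states this lemma as following immediately from the definitions (after noting, via Proposition~\ref{prop:onevertex}, that $\vec{\bG}^T$ is a bouquet), and your argument is exactly that unwinding — both matrices are computed by the same sign rule from the same cyclic permutation $\alpha^T\sigma$, since partial duality leaves $\alpha$, the darts, and the head/tail assignments unchanged.
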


As previously mentioned, the matrices $A(\vec{\bG},T)$ play a  role similar to the signed cycle--cocycle matrix in the classical theory.
Although our primary focus is using the matrices $A(\vec{\bG},T)$ to define  critical groups, they have a rich structure, are of interest in their own right and appear implicitly in several places in the literature (see Remark~\ref{gdsag}). The following list summarises the main properties of the matrices that appear here. We provide references to the complete statements of the results.

\begin{itemize}
\item $A(\vec{\bG},T)$ is a principally unimodular matrix. (See Corollary~\ref{cor:matrixisPU}.)
\item  The rank of $A(\vec{\bB})$ equals  twice the genus of a directed bouquet $\vec{B}$. (See Theorem~\ref{thm:rank}.)
\item A directed bouquet $\vec{\bB}$ is a quasi-tree if and only if $A(\vec{\bB})$ is non-singular. (See Theorem~\ref{thm:Bouchetbouq}.)
\item When $\vec{\bG}$ is plane, $A(\vec{\bG},T)+I$ is the signed cycle--cocycle matrix. (See Theorem~\ref{prop:plane}.)
\item The characteristic polynomial of $A(\vec{\bB})$ enumerates the spanning quasi-trees of a bouquet $\vec{\bB}$ by genus. (See Theorem~\ref{charpolycount}.)
\item The Matrix--Quasi-tree Theorem: $\det(A(\vec{\bG},T)+I)$ equals the number of spanning quasi-trees in $\bG$. (See Theorem~\ref{basdh}.)
\item The Weighted Matrix--Quasi-tree Theorem: $\det(A(\vec \bB)Z+I_m)$ is the weighted quasi-tree generating polynomial of a bouquet $\bB$, where $Z=\diag(z_1,\ldots, z_m)$. 
(See Theorem~\ref{thm:wmqtt}.) This is extended to connected maps in general in Theorem~\ref{thm:wmqtt2}. 
Moreover, this polynomial is Hurwitz stable. (See Theorem~\ref{thm:hurst}.)
\item The order of the bicycle space of a connected map $\bG$ divides the number of spanning quasi-trees in $\bG$. (See Theorem~\ref{thm:bic}.)
\end{itemize}

\bigskip

Recall that the classical critical group of an abstract graph $G$ can be defined as the cokernel of the signed cycle--cocycle matrix. 
Our strategy for obtaining the critical group of a map is to construct a topological analogue of the signed cycle--cocycle matrix of an abstract graph and define the critical group as the cokernel of this matrix. A slight modification of the matrices $A(\vec{\bG},T)$ appearing above provides the required analogue.
For any directed connected map $\vec{\bG}$ and the edge set $T$ of any of its spanning quasi-trees  we can consider the cokernel of $A(\vec{\bG},T)+I$, where $I$ is the identity matrix of size $|E(\vec{\bG})|$. 
This defines a critical group for each pair  $\vec{\bG}$ and $T$. 
The matrix $A(\vec{\bG},T)+I$ does, in general, depend upon both the choice of directions of the edges, and the choice of spanning quasi-tree. 
Remarkably, however, the cokernel does not. 

\begin{theorem}\label{adhk}
Let $\vec{\bG}_1$ be a directed connected map and $\vec{\bG}_2$ be a directed connected map obtained from  $\vec{\bG}_1$ by redirecting  the edges in some (possibly empty) set. Further let $T_1$ be the edge set of a spanning quasi-tree of $\vec \bG_1$, and  $T_2$ be the edge set of a spanning quasi-tree of $\vec \bG_2$.
Then the cokernels of $A(\vec \bG_1,T_1)+I$ and $A(\vec\bG_2,T_2)+I$ are isomorphic.
\end{theorem}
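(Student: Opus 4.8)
The plan is to isolate the two sources of dependence in $A(\vec{\bG},T)+I$ — the choice of edge directions and the choice of spanning quasi-tree — and to dispose of them in turn, reducing throughout to bouquets via Lemma~\ref{lem:dualtree}.

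\textit{Direction independence.} First I would show that reversing the directions of the edges in a set $R$ replaces $A(\vec{\bG},T)$ by $DA(\vec{\bG},T)D$, where $D$ is the diagonal $\pm1$ matrix with $-1$ in the positions indexed by $R$ and $+1$ elsewhere. This is immediate from Definition~\ref{sadbh}: reversing an edge $e$ interchanges the labels $e^+$ and $e^-$ in the cyclic permutation $\alpha^T\sigma$, and a short case check on the four labels $e^{\pm},f^{\pm}$ shows that this preserves the ``alternating versus grouped'' pattern but swaps the two alternating configurations, thereby flipping the sign of $A_{e,f}$ and $A_{f,e}$ for every $f\neq e$ while leaving $A_{e,e}=0$ fixed. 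Since $D^2=I$ we have $DA(\vec{\bG},T)D+I=D\bigl(A(\vec{\bG},T)+I\bigr)D$, and as $D$ is unimodular, pre- and post-multiplication by $D$ leaves the cokernel unchanged. Because spanning quasi-trees depend only on the underlying map, $T_2$ is also a spanning quasi-tree of $\vec{\bG}_1$, so applying this with $T=T_2$ replaces $\vec{\bG}_2$ by $\vec{\bG}_1$. It then remains to compare $A(\vec{\bG},T_1)+I$ and $A(\vec{\bG},T_2)+I$ for a single directed map $\vec{\bG}:=\vec{\bG}_1$ and two of its spanning quasi-trees.

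\textit{Reduction to a pivot on a bouquet.} By Lemma~\ref{lem:dualtree}, $A(\vec{\bG},T_i)=A(\vec{\bG}^{T_i})$, and by Proposition~\ref{prop:onevertex} each $\vec{\bG}^{T_i}$ is a bouquet. Writing $\vec{\bB}:=\vec{\bG}^{T_1}$ and $S:=T_1\bigtriangleup T_2$, the composition law $(\vec{\bG}^{A})^{B}=\vec{\bG}^{A\bigtriangleup B}$ for partial duals gives $\vec{\bG}^{T_2}=\vec{\bB}^{S}$, and Proposition~\ref{prop:onevertex}(2) shows $S$ is the edge set of a spanning quasi-tree of $\vec{\bB}$. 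Thus it suffices to prove $\ints^{E}/\langle A(\vec{\bB})+I\rangle\cong\ints^{E}/\langle A(\vec{\bB}^{S})+I\rangle$. The crucial input here is the identification of $A(\vec{\bB}^{S})$, up to edge reversals, with the principal pivot transform $A(\vec{\bB})*S$ obtained by pivoting on the principal block $A(\vec{\bB})[S]$; this is exactly where the delta-matroid and principally unimodular machinery enters, and I expect it to be the main obstacle, since it requires matching the combinatorially defined matrix of a partial dual with an algebraically defined pivot. Note that $A(\vec{\bB})[S]=A(\vec{\bB}[S])$ is nonsingular by Proposition~\ref{prop:submap} and Theorem~\ref{thm:Bouchetbouq} (as $\vec{\bB}[S]$ is a quasi-tree) and unimodular because $A(\vec{\bB})$ is principally unimodular (Corollary~\ref{cor:matrixisPU}); any residual edge reversals produced by partial duality are absorbed by the first step.

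\textit{The pivot preserves the shifted cokernel.} Write $A:=A(\vec{\bB})$ and, after simultaneously reordering the index set so that $S$ precedes $S^{c}$, partition $A=\begin{pmatrix} A_{SS} & A_{SS^c}\\ A_{S^cS} & A_{S^cS^c}\end{pmatrix}$ with $A_{SS}$ unimodular. The heart of the argument is the matrix identity
\[
A+I \;=\; \bigl((A*S)+I\bigr)\,\Phi,\qquad \Phi:=\begin{pmatrix} A_{SS} & A_{SS^c}\\ 0 & I\end{pmatrix},
\]
which I would verify from the defining exchange property of the pivot: if $A(x_1,x_2)^{t}=(a_1,a_2)^{t}$ then $(A*S)(a_1,x_2)^{t}=(x_1,a_2)^{t}$. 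Since $\Phi(x_1,x_2)^{t}=(a_1,x_2)^{t}$ with $a_1=(Ax)_1$, one computes $\bigl((A*S)+I\bigr)\Phi(x_1,x_2)^{t}=(x_1+a_1,\,a_2+x_2)^{t}=(A+I)(x_1,x_2)^{t}$, so the identity holds over $\reals$. Now $\Phi$ is an integer matrix with $\det\Phi=\det A_{SS}=\pm1$, hence unimodular, and right-multiplication by a unimodular matrix does not change the cokernel. Therefore $\ints^{E}/\langle A+I\rangle\cong\ints^{E}/\langle (A*S)+I\rangle$, which together with the identification of the second step and the direction-independence of the first completes the proof.
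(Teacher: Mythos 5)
Your first and third steps are sound: the direction-independence argument (conjugation by a $\pm 1$ diagonal matrix) is exactly the paper's opening move, and your factorisation $A+I=((A*S)+I)\Phi$ with $\Phi=\begin{pmatrix} A_{SS} & A_{SS^c}\\ 0 & I\end{pmatrix}$ unimodular is correct and is in fact a cleaner, more general mechanism than the paper's Lemma~\ref{lem:smith}, which establishes cokernel-invariance only for pivots on a single interlaced pair and does so by explicit row and column operations. (Your verification of the identity via the exchange property of the pivot checks out, $\Phi$ is unimodular because $\det A_{SS}=\pm 1$ by principal unimodularity, and $A*S$ is an integer matrix for the same reason, so right-multiplication by $\Phi$ indeed induces an isomorphism of cokernels.) However, your second step contains a genuine gap, and it is precisely the one you flag yourself: you never prove that $A(\vec\bB^S)$ coincides, up to edge reversals, with the principal pivot transform $A(\vec\bB)*S$ for an arbitrary quasi-tree edge set $S$. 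This identification is the combinatorial heart of the whole theorem --- it is what links the algebraic pivot to the map operation of changing spanning quasi-tree --- and it cannot be taken as input. The paper proves it only in the case $|S|=2$ (Lemma~\ref{lem:pivots}, for an interlaced pair $\{e,f\}$, by a lengthy case analysis on cyclic orders of darts), and then deliberately avoids ever needing the general case: it argues by induction on $|T_1\bigtriangleup T_2|$, using Theorem~\ref{thm:Bouchetbouq} to see that $A(\vec\bB_1)[T_1\bigtriangleup T_2]$ is invertible, hence has a nonzero entry, hence contains an interlaced pair $e,f\in T_1\bigtriangleup T_2$ (Lemma~\ref{lem:easyobsinterlace}); pivoting on that pair replaces $T_1$ by $T_3=T_1\bigtriangleup\{e,f\}$ and strictly decreases $|T_3\bigtriangleup T_2|$.

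To complete your argument you must supply that missing identification, and the natural routes lead back to the paper's structure: either (a) prove the pair case (the content of Lemma~\ref{lem:pivots}) and then obtain the general case by composing, using the composition laws $(\vec\bB^{S})^{S'}=\vec\bB^{S\bigtriangleup S'}$ for partial duals and the corresponding (nontrivial, Tucker-type) composition law for principal pivot transforms, together with the interlaced-pair extraction above; or (b) abandon the single block pivot, run the paper's induction on $|T_1\bigtriangleup T_2|$, and use your step-three factorisation (specialised to $|S|=2$) as a pleasant replacement for Lemma~\ref{lem:smith}. Option (b) is the cheapest way to turn your proposal into a complete proof, and it shows where your genuine contribution lies: not in circumventing the combinatorial work of Lemma~\ref{lem:pivots}, which your outline silently presupposes in strengthened form, but in replacing the paper's hands-on Smith-form computation by a structural unimodular factorisation.
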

Section~\ref{s:well} is devoted the the proof of this theorem.
With Theorem~\ref{adhk} ensuring independence of the choices made in its construction, we can now define the critical group of a map.
\begin{definition}\label{dcrit}
Let $\bG$ be a connected map with $m$ edges. Let $T$ be the edge set of any spanning quasi-tree of $\bG$, and let $\vec{\bG}$ be any directed map obtained by directing the edges of $\bG$. Then the \emph{critical group} of $\bG$, denoted by $K(\bG)$,  is defined as the cokernel of  the matrix $A(\vec{\bG},T)+I$, 
\[ K(\bG):= \mathbb{Z}^{m}/\langle A(\vec{\bG},T)+I\rangle,  \]
where $I$ is the $m\times m$ identity matrix.

If $\bG$ is a disconnected map, then its critical group is defined to be the direct sum of the critical groups of its connected components.
\end{definition}

We shall show that the following properties hold.
\begin{itemize}
\item The critical group is  well-defined. (See Theorem~\ref{adhk}.)
\item The order of the critical group of a connected map $\bG$ is equal to the the number of spanning quasi-trees in  $\bG$.  (See Theorem~\ref{order}.)
\item The critical groups of a map and its partial duals are all isomorphic. In particular, the critical group of a map and its dual are isomorphic. (See Theorem~\ref{dualiso}.)
\item When $\bG$ is a plane map, the critical group $K(\bG)$ of the map $\bG$ is isomorphic to the classical critical group $K(G)$ of its underlying abstract graph $G$. (See Theorem~\ref{prop:plane}.)
\item The critical group of a map can be understood as the classical critical group of its underlying abstract graph modified by a perturbation by the topological information from its embedding. (See Theorem~\ref{thm:pet}.)
\end{itemize}

Definition~\ref{dcrit} introduces the critical group of a map as the cokernel of a map version of the signed cycle--cocycle matrix. As noted in Theorem~\ref{thm:classicalcritgroups}, the critical group of an abstract graph $G$ can be defined as the cokernel of its reduced Laplacian. 
Analogously we can define the critical group of a map through the Laplacian of an associated abstract directed graph:
\begin{itemize}
\item The critical group of a map $\bG$ can be obtained as the the cokernel of the reduced Laplacian of its directed medial graph $\vec M(\bG)$:  \[K(\bG)\cong \ints^{m-1} /\langle \Delta^q(\vec M(\bG))\rangle
\cong 
(\ints^{m}\cap \mathbf{1}^{\perp})/\langle \Delta(\vec M(\bG))\rangle
.\] (See Theorem~\ref{thm:groupssame} and Section~\ref{s.Laplace} for definitions.)
\end{itemize}

We have so far provided two algebraic formulations of the critical group of a map. As we mentioned at the start of the introduction, in the classical case, the critical group of a graph can be constructed as the group of critical states of the chip-firing game (or sandpile model)~\cite{MR1676732,Chris+Gordon}.  
An immediate question is whether the critical group of a map can be realised in a similar way.
In Section~\ref{sec:chip} we show that indeed it can be.
\begin{itemize}
\item The critical group of a map $\bG$ can be realised as the group of critical states of a  chip-firing game on the edges of $\bG$ (See Theorem~\ref{psndf1}). 
\end{itemize}
In order to show this we describe a version of the chip-firing game in which chips are stacked on the edges of a map, rather than vertices as in the classical case. 
We then use work on the classical case, described for example in~\cite{MR2477390}, to identify the critical states of this chip-firing game with the elements of the critical group of a map.

\begin{example}\label{basicexamp2}
Let $\vec{\bG}=[\sigma,\alpha,\phi]$ be the directed map from Example~\ref{basicexamp1}, shown in Figure~\ref{fex1},
and let $\bG$ be its underlying map.
By direct computation,
\[  A(\vec{\bG},\{c\})= 
\begin{pmatrix} 0&-1&-1&-1\\1&0&0&1\\1&0&0&1\\1&-1&-1&0\end{pmatrix}
,\text{ and }
 A(\vec{\bG},\{a,b,c\})=
 \begin{pmatrix} 0&1&0&1\\-1&0&1&1\\0&-1&0&0\\-1&-1&0&0\end{pmatrix}
.\]
It can be checked that $A(\vec{\bG},\{c\})=A(\vec{\bG}^{\{c\}})$, that    $A(\vec{\bG},\{a,b,c\})=A(\vec{\bG}^{\{a,b,c\}})$, and also that, in the notation of Equations~\eqref{twist1} and~\eqref{twist2} explained later, $A(\vec{\bG}^{\{c\}})\ast \{a,b\} = A(\vec{\bG}^{\{a,b,c\}})$.

The Smith Normal Form of both $A(\vec{\bG},\{c\})+I_4$  and $A(\vec{\bG},\{a,b,c\})+I_4$ is $\diag(1,1,1,6)$ and so $K(\bG)\cong \mathbb{Z} /6\mathbb{Z}$. Observe that the order of $K(\bG)$ equals the number of spanning quasi-trees in $\bG$. 
\end{example}

\begin{remark}\label{gdsag}
As noted above, the matrices $A(\vec{\bG},T)$ can be found in the literature, albeit in different settings. 
 They appear in knot theory as the matrices $\mathrm{IM}(D)$ from Bar-Natan and Garoufalidis' celebrated proof of the Melvin--Morton--Rozansky Conjecture in~\cite{zbMATH00885522}. The connection is as follows. Given a chord diagram $D$ with chords numbered from 1 to as $m$ as described in \cite[Definition~3.4]{zbMATH00885522}. Construct a map  $\vec{\bG}$ by taking  $\alpha=(1^-1^+) \cdots (m^-m^+)$, and for $\sigma$, start with the empty cycle then read backwards along the skeleton of $D$ and the first time you meet a chord $i$ append $i^+$ to $\sigma$, and the second time $i^-$.  Then $\mathrm{IM}(D) = A(\vec{\bG},\emptyset)$. 
 This observation reappears in Theorem~\ref{thm:Bouchetbouq}.

They appear in topological graph theory as Mohar's oriented overlap matrices from~\cite{zbMATH04123755}. When  $\vec{\bB}$ is a directed bouquet the matrix $A(\vec{\bB},\emptyset)$ coincides with the oriented overlap matrices of $\vec{\bG}$ with respect to its unique spanning tree. (In Mohar's terminology the appearance of  $(e^+ f^+ e^- f^-)$ when restricting $\alpha^{T}\sigma$ to the darts $e^-, e^+,f^-,f^+$ means ``$e$ is inside $f$''.) Furthermore by making use of Proposition~\ref{prop:submap} and Lemma~\ref{lem:dualtree} it is not hard to see that when  $\vec{\bG}$ is a directed map with edge set $E$ and $T$ is the edge set of one of its spanning trees, then the submatrix $A(\vec{\bG},T)[E-T]$ coincides with the oriented overlap matrix of $\vec{\bG}$ formed with respect to the tree on $T$. 
Mohar's work appears later in Theorem~\ref{thm:rank}.

They appear in Macris and Pul\'e's work on Euler circuits in  directed graphs~\cite{MR1395471}. Suppose $\bB$ is an oriented bouquet on darts 
$1,\dots, 2n$ with $\sigma=(1\,2\,\ldots\,2n)$ and with $n$ edges.
Let $\vec{\bB}$ be the directed graph obtained from $\bB$ by taking the smaller dart in each to be the tail. 
Following Macris and Pul\'e's notation, if $V$ is the set of darts and $P$ is the set of transpositions then their matrix $I_P$ coincides with $A(\vec{\bB},\emptyset)$. 
It is readily seen that Macris and Pul\'e's directed graph $G_P$ is  the (underlying) directed medial graph of $\bB$. (And all their graphs $G_P$ arise in this way.)  The Euler circuits in this directed medial graph correspond to the spanning quasi-trees in $\bB$. These observations together with the main result of~\cite{MR1395471} offer an alternative approach to our Theorem~\ref{basdh}.

Most importantly for us, they appear in Bouchet's work on circle graphs~\cite{MR900943}. Suppose that $\vec{\bB}=[\sigma,\alpha, \phi]$ is a directed bouquet that follows  Convention~\ref{con1}. Let $\mu:=\sigma$ to match Bouchet's notation.
 The permutation $\mu$ is then what Bouchet terms a ``separation'' (of the double occurrence word obtained from $\mu$ by forgetting the exponents), and  Bouchet's matrix $A(\mu)$ is readily seen to equal $A(\vec{\bB},\emptyset)$. The observation that Bouchet's $\mu^*$ equals  $\phi^{-1}$ provides an interpretation the main result of~\cite{MR900943} in terms of dual maps and quasi-trees, an observation we make use of in Theorem~\ref{thm:Bouchetbouq} below.   
\end{remark}

\section{The group is well-defined }\label{s:well}
For much of this section, we will be considering a directed bouquet $\vec\bB$ and the matrix $A(\vec\bB,\emptyset)$. 
Recall that for brevity, we abbreviate $A(\vec\bB,\emptyset)$ to $A(\vec\bB)$.

The following result is a translation (see Remark~\ref{gdsag}) of Theorem~3.2 of~\cite{zbMATH04123755}. 
\begin{theorem}\label{thm:rank}
Let $\vec{\bB}$ be a directed bouquet. Then the genus of $\vec{\bB}$ equals $r(A(\vec{\bB}))/2$.
\end{theorem}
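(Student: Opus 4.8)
The plan is to recast the claim as a statement about the corank of $A(\vec\bB)$ and then identify the kernel with the ``face space'' of the bouquet. Since $\vec\bB$ has a single vertex, the genus formula \eqref{eqn:genus} reads $g(\vec\bB)=\tfrac12(1+e-f)$, where $e=|E|$ and $f$ are the numbers of edges and faces. As $A(\vec\bB)$ is an $e\times e$ skew-symmetric real matrix, its rank is even and equals $e-\dim\ker A(\vec\bB)$, so the asserted equality $r(A(\vec\bB))=2g(\vec\bB)$ is equivalent to
\[\dim_{\reals}\ker A(\vec\bB)=f-1.\]
Everything thus reduces to computing this corank.

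To do so I would attach to each face a vector recording how its boundary walk runs along the edges. Writing $[d]$ for the edge containing a dart $d$ and setting $\varepsilon(i^+)=1$, $\varepsilon(i^-)=-1$, I associate to each face $F$ (a cycle of $\phi$) the vector $w_F:=\sum_{d\in F}\varepsilon(d)\,\mathbf e_{[d]}\in\reals^E$, where $\{\mathbf e_i\}_{i\in E}$ is the standard basis, and I let $W\colon\reals^{\mathcal F}\to\reals^E$ be the linear map sending each face to its vector, where $\mathcal F$ is the set of faces. Granting for the moment that $\im W=\ker A(\vec\bB)$, it remains only to compute $\dim\im W$, and here there is a clean argument. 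Each edge $i$ contributes $+\mathbf e_i$ through its head and $-\mathbf e_i$ through its tail, so $\sum_F w_F=0$ and $\mathbf 1\in\ker W$. Conversely, if $\sum_F c_F w_F=0$, then comparing coefficients of $\mathbf e_i$ gives $c_{F(i^+)}=c_{F(i^-)}$ for every edge $i$, where $F(d)$ is the face containing $d$. The incidence joining, for each edge $i$, the faces $F(i^+)$ and $F(i^-)$ is precisely the underlying graph of the dual map $\bG^*=[\phi^{-1},\alpha,\sigma^{-1}]$, whose vertices are the faces of $\vec\bB$; since $\vec\bB$ is connected so is its dual, whence all $c_F$ coincide and $\ker W=\langle\mathbf 1\rangle$. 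Thus $\dim\im W=f-1$, and the theorem follows once $\im W=\ker A(\vec\bB)$ is established.

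The inclusion $\im W\subseteq\ker A(\vec\bB)$ is the heart of the matter and the step I expect to be the main obstacle. It amounts to checking $A(\vec\bB)\,w_F=0$ for each face $F$, i.e.\ that $\sum_{d\in F}\varepsilon(d)\,A_{i,[d]}=0$ for every edge $i$. Geometrically this says that the closed boundary walk of $F$ crosses the chord $i$ with algebraic multiplicity zero, and I would prove it by reading the two interlacement patterns of Definition~\ref{sadbh} as signed crossings and telescoping the sum along the successive darts $d,\phi(d)$ of the cycle $F$. Carrying out this bookkeeping carefully, while tracking the head/tail conventions, is the genuinely technical part of the argument.

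For the reverse inclusion $\ker A(\vec\bB)\subseteq\im W$ — equivalently the matching lower bound $r(A(\vec\bB))\ge 2g(\vec\bB)$ — I would appeal to the oriented Cohn--Lempel equality, which expresses the number of circuits obtained by smoothing a chord diagram along its chords directly in terms of the rank of the oriented interlacement matrix; under the dictionary of Remark~\ref{gdsag} these circuits are exactly the faces of $\vec\bB$ and the matrix is $A(\vec\bB)$, so the two coranks agree. This is also the content of Mohar's Theorem~3.2, of which the present statement is a translation, so in a pinch the whole result may be obtained by invoking it through the correspondence recorded in Remark~\ref{gdsag}. Alternatively, the lower bound admits a direct induction on $e$: an edge interlacing no other contributes a zero row and column and may be deleted while decreasing both $e$ and $f$ by one, and any remaining edge belongs to an interlacing pair that can be pivoted away to reach a bouquet of the same genus with fewer edges.
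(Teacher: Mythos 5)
Your reduction of the statement to the corank computation $\dim\ker A(\vec\bB)=f-1$ is correct, and the one step you carry out in full---that $\ker W=\langle\mathbf 1\rangle$ via connectivity of the dual, hence $\dim\im W=f-1$---is fine. But the proposal rests entirely on the equality $\im W=\ker A(\vec\bB)$, and neither inclusion is actually proved. The inclusion $\im W\subseteq\ker A(\vec\bB)$, i.e.\ $\sum_{d\in F}\varepsilon(d)A_{i,[d]}=0$ for every face $F$ and edge $i$, is only announced: you describe a plan (``telescoping'' signed crossings along the face cycle) and yourself flag the bookkeeping as ``the genuinely technical part,'' so the heart of the argument is missing. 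For the reverse inclusion your primary suggestion is to invoke the oriented Cohn--Lempel equality or Mohar's Theorem~3.2---but that theorem \emph{is} the statement being proved. Indeed, the paper gives no independent proof at all: its entire ``proof'' is the dictionary of Remark~\ref{gdsag} together with the citation of Mohar. So proving one inclusion by hand while quoting Mohar for the other establishes nothing beyond what the bare citation already gives; as a self-contained argument it is circular, and as a citation argument the rest of your work is redundant.

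Your fallback induction also has a concrete error. The base step (deleting an edge interlacing no other, which removes a zero row and column and decreases $e$ and $f$ each by one, preserving both the rank and $2g$) is fine. But in the inductive step you claim that pivoting on an interlaced pair $\{e,f\}$ and removing it yields ``a bouquet of the same genus with fewer edges.'' Since the $2\times 2$ block indexed by $\{e,f\}$ is invertible, removing the pair after pivoting drops the rank of $A$ by exactly $2$ (Schur complement, via Lemma~\ref{lem:pivots} and Proposition~\ref{prop:submap}), so for the induction to close the genus must drop by exactly $1$, not remain the same. Moreover, that topological fact---that $\bB^{\{e,f\}}$ with $e$ and $f$ deleted has genus $g(\bB)-1$---is itself nontrivial and needs proof: partial duality does not preserve genus, and deleting edges changes the face count in a way that must be controlled. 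In short, the skeleton of your direct approach is sound and could be completed, but both load-bearing steps are missing, outsourced to the target theorem, or mis-stated; the only complete route contained in your proposal is its last sentence, which is precisely the paper's proof.
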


The next theorem is a rephrasing of the main theorem of~\cite{MR900943}. 
The first part can be deduced from Theorem~\ref{thm:rank} and was also proved (in several different ways) in~\cite[Theorem~3]{zbMATH00885522} via the connection given in Remark~\ref{gdsag}.

\begin{theorem}\label{thm:Bouchetbouq}
Let $\vec{\bB}$ be a directed bouquet. 
Then  $\vec{\bB}$ is a quasi-tree if and only if $A(\vec{\bB})$ is invertible. 
Moreover, when $A(\vec{\bB})$ is invertible, $A(\vec{\bB})^{-1}=A(\vec{\bB}^*)$. 
\end{theorem}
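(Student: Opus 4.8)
The plan is to prove the two assertions separately, obtaining the invertibility criterion from the rank formula of Theorem~\ref{thm:rank} and the inversion formula from Bouchet's theorem via the dictionary set up in Remark~\ref{gdsag}. For the first statement I would argue purely by counting. Since $\vec\bB$ is a bouquet it has a single vertex, so writing $m$ for the number of edges (the size of $A(\vec\bB)$) and $f$ for the number of faces, the genus formula~\eqref{eqn:genus} gives $g(\bB)=\tfrac12(2-1+m-f)=\tfrac12(1+m-f)$. By Theorem~\ref{thm:rank} the rank of $A(\vec\bB)$ equals $2g(\bB)=1+m-f$. As $A(\vec\bB)$ is an $m\times m$ matrix, it is invertible precisely when $1+m-f=m$, that is, when $f=1$, which says $\phi$ is cyclic; and this is exactly the condition for $\vec\bB$ to be a quasi-tree.

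For the second statement I would first record that the matrix on the right is well defined. Assuming $A(\vec\bB)$ is invertible, the first part gives that $\bB$ is a quasi-tree, so $\phi$, and hence $\phi^{-1}$, is cyclic. The geometric dual $\bB^*=[\phi^{-1},\alpha,\sigma^{-1}]$ therefore has cyclic vertex permutation $\phi^{-1}$, so $\bB^*$ is itself a bouquet and $A(\vec\bB^*)=A(\vec\bB^*,\emptyset)$ is defined. (Equivalently, $\bB^*=\bB^E$ and $E$ induces the spanning quasi-tree $\bB[E]=\bB$, so Lemma~\ref{lem:dualtree} gives $A(\vec\bB^*)=A(\vec\bB,E)$, recording that $A(\vec\bB^*)$ is read off the cyclic permutation $\alpha\sigma=\phi^{-1}$.) The goal is then $A(\vec\bB)\,A(\vec\bB^*)=I$.

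I would deduce this directly from the main theorem of~\cite{MR900943}. Setting $\mu:=\sigma$ as in Remark~\ref{gdsag}, Bouchet's separation matrix satisfies $A(\mu)=A(\vec\bB)$, and his dual separation is $\mu^*=\phi^{-1}$, which is precisely the vertex rotation of $\bB^*$; hence Bouchet's matrix $A(\mu^*)$ coincides with $A(\vec\bB^*)$. Bouchet's theorem states that $A(\mu)$ is nonsingular exactly when the associated object is, in our language, a quasi-tree, and that in this case $A(\mu)^{-1}=A(\mu^*)$. Reading this through the dictionary yields $A(\vec\bB)^{-1}=A(\vec\bB^*)$.

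The main obstacle will not be the linear algebra but the bookkeeping that certifies the translation: one must check that the sign rule of Definition~\ref{sadbh} (reading $(e^+ f^+ e^- f^-)$ versus $(f^+ e^+ f^- e^-)$ off $\alpha^T\sigma$) together with the head/tail convention of Convention~\ref{con1} reproduces Bouchet's orientation conventions for $A(\mu)$, and, crucially, that his dual separation $\mu^*$ equals $\phi^{-1}$ entry-for-entry rather than merely up to relabelling, so that the signs in the inverse come out right. A fully self-contained alternative would instead verify $A(\vec\bB)\,A(\vec\bB^*)=I$ by computing the $(e,g)$ entry $\sum_f A(\vec\bB)_{e,f}\,A(\vec\bB^*)_{f,g}$ and showing it equals $\delta_{e,g}$ via an analysis of how the interlacement pattern of the darts $e^\pm,f^\pm,g^\pm$ in $\sigma$ transforms under passage to $\alpha\sigma$; this is the combinatorial core that Bouchet's proof already supplies, and I would not reproduce it.
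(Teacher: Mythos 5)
Your proposal is correct and matches the paper's own treatment: the paper gives no self-contained proof, stating instead that the theorem is a rephrasing of the main theorem of~\cite{MR900943} via the dictionary of Remark~\ref{gdsag} (Bouchet's $\mu^*=\phi^{-1}$), and that the first part can be deduced from Theorem~\ref{thm:rank} --- exactly the two routes you take. Your explicit genus count $2g(\bB)=1+m-f$ for the rank argument, and your observation that $\bB^*$ is itself a bouquet so that $A(\vec\bB^*)=A(\vec\bB,E)$ is well defined, are correct fillings-in of details the paper leaves implicit.
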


We record two corollaries of Theorem~\ref{thm:Bouchetbouq}.
\begin{corollary}\label{cor:matrixisPU}
Let $\vec{\bG}$ be a directed, connected map and $T$ be the edge set of a spanning quasi-tree of $\vec{\bG}$. Then $A(\vec{\bG},T)$ is principally unimodular (PU).
\end{corollary}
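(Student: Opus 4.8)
The plan is to reduce to the case of a directed bouquet and then read off unimodularity from the ``moreover'' clause of Theorem~\ref{thm:Bouchetbouq}, which supplies an \emph{integer} inverse.

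First I would use Lemma~\ref{lem:dualtree} to replace $A(\vec{\bG},T)$ by $A(\vec{\bG}^T)$. Since $T$ is the edge set of a spanning quasi-tree, the partial dual $\vec{\bG}^T$ is a directed bouquet $\vec{\bB}$ (as recorded just before Proposition~\ref{prop:submap}). Hence $A(\vec{\bG},T)=A(\vec{\bB})$, and it suffices to show that $A(\vec{\bB})$ is PU for every directed bouquet $\vec{\bB}$. So let $X\subseteq E$ index an arbitrary \emph{nonsingular} principal submatrix $A(\vec{\bB})[X]$. By Proposition~\ref{prop:submap} this submatrix equals $A(\vec{\bB}[X])$, and the induced submap $\vec{\bB}[X]$ is itself a directed bouquet, since deleting darts from the single cycle $\sigma$ of $\vec{\bB}$ and removing any empty cycle still leaves a single cycle. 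Thus Theorem~\ref{thm:Bouchetbouq} may legitimately be applied to $\vec{\bB}[X]$.

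Because $A(\vec{\bB}[X])$ is invertible by hypothesis, Theorem~\ref{thm:Bouchetbouq} tells us that $\vec{\bB}[X]$ is a quasi-tree and, crucially, that $A(\vec{\bB}[X])^{-1}=A((\vec{\bB}[X])^*)$. The final step is the key observation: by Definition~\ref{sadbh} the entries of $A((\vec{\bB}[X])^*)$ all lie in $\{0,1,-1\}$, so $A(\vec{\bB}[X])$ is an integer matrix whose inverse is again an integer matrix. Taking determinants of the identity $A(\vec{\bB}[X])\,A(\vec{\bB}[X])^{-1}=I$ gives $\det(A(\vec{\bB}[X]))\cdot\det(A(\vec{\bB}[X])^{-1})=1$ with both factors integers, which forces $\det(A(\vec{\bB}[X]))=\pm 1$. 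Hence every nonsingular principal submatrix of $A(\vec{\bB})$ is unimodular, so $A(\vec{\bB})$, and therefore $A(\vec{\bG},T)$, is PU.

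I expect no serious obstacle: the substantive work has already been isolated into Theorem~\ref{thm:Bouchetbouq} (integrality of the inverse) and Proposition~\ref{prop:submap} (principal submatrices are the matrices of induced sub-bouquets). The only points needing a moment's care are verifying that an induced submap of a bouquet remains a bouquet, so that Theorem~\ref{thm:Bouchetbouq} applies to $\vec{\bB}[X]$, and noting that the weaker value $\pm 1$ already suffices for unimodularity, even though the even-order skew-symmetric Pfaffian identity mentioned in Section~\ref{danjkj} would in fact pin the determinant down to $+1$.
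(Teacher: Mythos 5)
Your proposal is correct and follows essentially the same route as the paper's own proof: reduce to a directed bouquet via Lemma~\ref{lem:dualtree}, identify each nonsingular principal submatrix with $A(\vec{\bB}[X])$ via Proposition~\ref{prop:submap}, and invoke the integrality of the inverse from Theorem~\ref{thm:Bouchetbouq} to force determinant $\pm 1$. The extra checks you flag (that an induced submap of a bouquet is again a bouquet, and that $\pm 1$ suffices) are left implicit in the paper but are exactly the right points to verify.
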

\begin{proof}
As $\vec{\bG}^T$ is a bouquet, Lemma~\ref{lem:dualtree} implies that it is sufficient to establish the result in the case where $\vec{\bG}$ is a bouquet and $T=\emptyset$.

To do this let $\vec{\bB}$ be a directed bouquet.
Suppose that $\det(A(\vec{\bB})[X])\ne 0$. 
Then, by Theorem~\ref{thm:Bouchetbouq} and Proposition~\ref{prop:submap}, 
$(A(\vec{\bB})[X])^{-1} = A((\vec{\bB}[X])^*)$. In particular, $(A(\vec{\bB})[X])^{-1}$ is integer-valued. Hence $\det(A(\vec{\bB})[X]) =\det(A(\vec{\bB}[X]))=\pm 1$. Thus $A(\vec{\bB})$ is PU.
\end{proof}

\begin{corollary}\label{cor:bouquets}
Let $\vec{\bB}$ be a directed bouquet and $X$ be a subset of the edges of $\bB$. Then 
\[ \det (A(\vec\bB)[X]) = \begin{cases} 1 & \text{if $X$ is the edge set of a spanning quasi-tree of $\vec\bB$,} \\ 0 & \text{otherwise.} \end{cases}\]
\end{corollary}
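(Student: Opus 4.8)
The plan is to reduce the statement to the invertibility criterion of Theorem~\ref{thm:Bouchetbouq} applied to the induced submap $\vec\bB[X]$, using the identity $A(\vec\bB)[X] = A(\vec\bB[X])$ from Proposition~\ref{prop:submap}. First I would record that, since $\bB$ is a bouquet (so $\sigma$ is a single cycle), the induced submap $\vec\bB[X]$ is again a directed bouquet: deleting the darts of edges outside $X$ from the cyclic permutation $\sigma$, and removing the resulting empty cycle when $X=\emptyset$, leaves a cyclic permutation on the surviving darts. In particular $\bB[X]$ has exactly one vertex, the same as $\bB$, so by the definition of a spanning quasi-tree, $X$ is the edge set of a spanning quasi-tree of $\bB$ if and only if $\bB[X]$ is a quasi-tree (equivalently, by Proposition~\ref{prop:onevertex}(1), if and only if $\alpha^X\sigma$ is cyclic).

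If $X$ is not a spanning quasi-tree, then $\vec\bB[X]$ is not a quasi-tree, so Theorem~\ref{thm:Bouchetbouq} gives that $A(\vec\bB[X])$ is singular; hence $\det(A(\vec\bB)[X]) = \det(A(\vec\bB[X])) = 0$, as required. (When $X=\emptyset$ this case does not arise, since the empty map is a quasi-tree and $\det(A[\emptyset])=1$ by convention, matching the other case.)

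If $X$ is a spanning quasi-tree, then $\vec\bB[X]$ is a quasi-tree and Theorem~\ref{thm:Bouchetbouq} gives that $A(\vec\bB[X])$ is invertible, so its determinant is nonzero. By Corollary~\ref{cor:matrixisPU} the matrix $A(\vec\bB)$ is PU, and $A(\vec\bB)[X]$ is one of its nonsingular principal submatrices, so $\det(A(\vec\bB)[X]) = \pm 1$. To fix the sign I would use skew-symmetry: the matrix $A(\vec\bB)[X]$ is skew-symmetric, being a principal submatrix of the skew-symmetric matrix $A(\vec\bB)$. Since it is nonsingular it cannot have odd order (an odd-order skew-symmetric matrix is singular), so it has even order; and an even-order skew-symmetric unimodular matrix has determinant $0$ or $1$. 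Being nonsingular, its determinant is therefore $1$, completing this case.

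The argument is short, and the only subtlety — the one place I expect to need care — is pinning the determinant to $+1$ rather than $-1$ in the spanning quasi-tree case. This is exactly where skew-symmetry is essential: principal unimodularity alone would only give $\pm 1$. The two recalled facts about skew-symmetric matrices (odd order forces singularity, even order forces a unimodular determinant into $\{0,1\}$) resolve the sign cleanly.
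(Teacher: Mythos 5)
Your proposal is correct and follows essentially the same route as the paper: combine Proposition~\ref{prop:submap} with Theorem~\ref{thm:Bouchetbouq} to characterise when the principal minor is nonzero, then use Corollary~\ref{cor:matrixisPU} together with the skew-symmetry facts from Subsection~\ref{danjkj} (odd order forces singularity; even order plus unimodularity forces determinant $1$) to pin the nonzero value to $+1$. Your explicit check that $\vec\bB[X]$ is itself a bouquet, so that Theorem~\ref{thm:Bouchetbouq} genuinely applies to it, is a detail the paper leaves implicit, but the argument is the same.
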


\begin{proof}
It follows from Theorem~\ref{thm:Bouchetbouq} and Proposition~\ref{prop:submap} that $\det (A(\vec\bB)[X])$ is non-zero if and only if $X$ is the edge set of a spanning quasi-tree of $\vec\bB$. It follows from Corollary~\ref{cor:matrixisPU} and the remarks in Subsection~\ref{danjkj} about the principle minors of skew-symmetric matrices, that if $\det (A(\vec\bB)[X])$ is non-zero, then it is equal to $1$.
\end{proof}

We say that edges $e$ and $f$ of a map are \emph{interlaced} in a cyclic permutation $\beta$ of its darts if removing all the entries of $\beta$ except for the darts 
$e^-, e^+,f^-,f^+$ results in the cyclic permutation $(e^+f^+e^-f^-)$ or $(e^+f^-e^-f^+)$. Furthermore, we say that edges $e$ and $f$ of a bouquet $[\sigma,\alpha,\phi]$ are \emph{interlaced} if they are interlaced in $\sigma$.
So edges $e$ and $f$ of a directed bouquet $\vec \bB$ are interlaced if and only if the $(e,f)$-entry of $A(\vec \bB)$ is non-zero.

The following lemmas are easy observations.

\begin{lemma}\label{lem:walknewtree}
Let $T$ be the edge set of a spanning quasi-tree of a directed connected map $\vec \bG=[\sigma,\alpha,\phi]$ and let $e$ and $f$ be edges of $\vec \bG$.
Suppose that
$\alpha^T \sigma$ has the form $(e^a W f^b X e^c Y f^d Z)$, where $\{a,c\}=\{b,d\}=\{+,-\}$, and $W$, $X$, $Y$ and $Z$ are (possibly empty) subwords of $\alpha^T \sigma$ collectively including all the darts arising from edges of $\bG$ other than $e$ and $f$. Then
\[ \alpha^{T\bigtriangleup \{e,f\}} \sigma 
 = (e^a W f^d Z e^c Y f^b X).\]
\end{lemma}

\begin{lemma}\label{lem:easyobsinterlace}
Let $T$ be the edge set of a spanning quasi-tree of a connected map $\bG=[\sigma,\alpha,\phi]$
and let $e$ and $f$ be edges of $\vec \bG$. 
Then $T':= T\bigtriangleup \{e,f\}$ is the edge set of a spanning quasi-tree of $\bG$ if and only if 
$e$ and $f$ are interlaced in $\alpha^T \sigma$.
\end{lemma}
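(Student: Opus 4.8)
The plan is to translate the statement into a question about cycle structure via Proposition~\ref{prop:onevertex}(1). Since $T$ is the edge set of a spanning quasi-tree, $\alpha^T\sigma$ is a single cyclic permutation, and $T':=T\bigtriangleup\{e,f\}$ is the edge set of a spanning quasi-tree if and only if $\alpha^{T'}\sigma$ is cyclic. So it suffices to prove that $\alpha^{T\bigtriangleup\{e,f\}}\sigma$ is cyclic if and only if $e$ and $f$ are interlaced in $\alpha^T\sigma$. The first step I would record is the algebraic identity underlying edge-toggling: writing $\tau_e=(e^+e^-)$ and $\tau_f=(f^+f^-)$, it is immediate from the definition of $\alpha^{(\cdot)}$ that $\alpha^{T\bigtriangleup\{e\}}=\tau_e\,\alpha^T$, and iterating gives $\alpha^{T\bigtriangleup\{e,f\}}\sigma=\tau_f\tau_e\,(\alpha^T\sigma)$.

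The engine of the argument is the standard fact that left-multiplying a permutation by a transposition $(x\,y)$ changes its number of cycles by exactly one: it splits a cycle when $x$ and $y$ lie in a common cycle, and merges two cycles when they do not. Put $\pi:=\alpha^T\sigma$, a single cycle. The darts $e^+,e^-$ lie in that cycle, so $\tau_e\pi$ has exactly two cycles; explicitly, if $\pi=(e^+\,A\,e^-\,B)$ then $\tau_e\pi=(e^+\,A)(e^-\,B)$. The effect of the second toggle is then dictated solely by whether $f^+$ and $f^-$ sit in the same cycle of $\tau_e\pi$: if they lie in different cycles (one inside $A$, one inside $B$) then $\tau_f\tau_e\pi$ is once again a single cycle, while if they lie in the same cycle it splits further and is not cyclic. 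To finish I would match this dichotomy with interlacement by inspecting the cyclic order of the four darts in $\pi=(e^+\,A\,e^-\,B)$: exactly one $f$-dart in each of $A,B$ yields the alternating order $e,f,e,f$, which is precisely the interlaced pattern, whereas both $f$-darts on the same side yields a non-alternating order. This bookkeeping step is short but must be done carefully.

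I expect the main obstacle to be the converse (non-interlaced) direction, because the paper's Lemma~\ref{lem:walknewtree} supplies the cycle computation only in the interlaced form $(e^a W f^b X e^c Y f^d Z)$; it gives the forward direction instantly, producing the single cycle $(e^a W f^d Z e^c Y f^b X)$, hence a quasi-tree by Proposition~\ref{prop:onevertex}(1). For the non-interlaced case one can run the transposition count above, or perform the parallel direct computation: writing $\pi=(e^a W e^c X f^b Y f^d Z)$, the same toggling bookkeeping shows $\alpha^{T\bigtriangleup\{e,f\}}\sigma$ breaks into more than one cycle and so is not cyclic. The only genuine care required throughout is keeping the left/right multiplication conventions consistent and tracking the four distinguished darts accurately.
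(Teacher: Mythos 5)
Your proposal is correct and follows essentially the same route as the paper: reduce via Proposition~\ref{prop:onevertex}(1) to the question of whether $\alpha^{T\bigtriangleup\{e,f\}}\sigma$ is cyclic, handle the interlaced direction by the cycle computation of Lemma~\ref{lem:walknewtree}, and check that the non-interlaced toggle breaks the permutation into several cycles. Your transposition split/merge engine (writing $\alpha^{T\bigtriangleup\{e,f\}}\sigma=\tau_f\tau_e\,\alpha^T\sigma$ and counting cycles) is a clean, uniform way of carrying out exactly the bookkeeping that the paper's proof compresses into ``it is easy to verify'' for the non-interlaced case.
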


\begin{proof}
 If $e$ and $f$ are interlaced in $\alpha^T \sigma$, then the result follows from the previous lemma. If $e$ and $f$ are not interlaced in $\alpha^T \sigma$, then it is easy to verify that $\alpha^{T\bigtriangleup \{e,f\}}\sigma$ has more than one cycle and therefore 
 $T\bigtriangleup \{e,f\}$ is not the edge set of a spanning quasi-tree of $\bG$.
\end{proof}

Suppose that $\bB=[\sigma,\alpha,\phi]$ is a bouquet and that $e$ and $f$ are interlaced. Our next goal is to describe how the matrices $A(\bB)$ and $A(\bB^{\{e,f\}})$ are related. 

For a matrix $A$ with rows and columns both indexed by a set $E$, let $A_{i\leftrightarrow j}$ denote the operation of interchanging the rows indexed by $i$ and $j$, and then interchanging the columns indexed by $i$ and $j$. Let $A_{-i}$ denote the operation of multiplying the row indexed by $i$ by $-1$ and then multiplying the column indexed by $i$ by $-1$. 

These matrix operations may be used to describe the effect on $A(\bB)$ of making various changes to $\bB$. 
Suppose $\bB'$ is formed from $\bB$ by interchanging the darts $i^{+}$ with $j^{+}$ 
and $i^{-}$ with $j^{-}$
in the map. Then  $A(\bB')=A(\bB)_{i\leftrightarrow j}$. 
If $\bB'$ is formed from $\bB$ by changing the direction of edge $i$, that is interchanging its head and tail, then
$A(\bB')=A(\bB)_{-i}$. Finally if $\bB'$ is formed from $\bB$ by reversing the orientation, that is replacing $\sigma$ by $\sigma^{-1}$, then $A(\bB')=-A(\bB)$.

Now, suppose that $A$ is a skew-symmetric $\{-1,0,1\}$-matrix with rows and columns indexed by $E$ and that $e,f \in E$. Furthermore suppose that 
\begin{equation}\label{twist1}
A= \begin{pNiceMatrix}[first-row,first-col] &e&f&E' \\ e& 0 & z & u \\ f&-z & 0 & v \\ E'&-u^t & -v^t & A'
\end{pNiceMatrix},
\end{equation}
where $E'=E-\{e,f\}$, $z \in \{-1,1\}$, $u$ and $v$ are row vectors indexed by $E'$ and the rows and columns of $A'$ are indexed by $E'$. 
Let 
\begin{equation}\label{twist2}
A*\{e,f\} = \begin{pNiceMatrix}[first-row,first-col] 
&e&f&E'\\
e&0 & -z & -zv \\ 
f&z & 0 & zu \\ E' & z v^t & - zu^t & A''\end{pNiceMatrix},
\end{equation}
where 
$A'' = A'+z(v^t u - u^t v)$. 
Notice that $A*\{e,f\}$ is skew-symmetric.  We say that $A*\{e,f\}$ is the \emph{pivot} of $A$ about $e$ and $f$. This is a special case of the principal pivot transform~\cite{MR3396730,MR0114760}.

\begin{lemma}\label{lem:pivots}
Let $\vec \bB$ be a directed bouquet with interlaced edges $e$ and $f$. Then $A(\vec \bB^{\{e,f\}}) = A(\vec \bB)*\{e,f\}$. 
\end{lemma}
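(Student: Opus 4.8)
The plan is to read both matrices off the relevant cyclic permutations and compare them entry by entry. First I would note that since $e$ and $f$ are interlaced, Lemma~\ref{lem:easyobsinterlace} applied with $T=\emptyset$ shows that $\{e,f\}$ is the edge set of a spanning quasi-tree of $\bB$; hence $\vec\bB^{\{e,f\}}$ is again a bouquet, and by Lemma~\ref{lem:dualtree} we have $A(\vec\bB^{\{e,f\}}) = A(\vec\bB,\{e,f\})$, a matrix read directly from the cyclic permutation $\alpha^{\{e,f\}}\sigma$. Applying Lemma~\ref{lem:walknewtree} with $T=\emptyset$, I would write $\sigma = (e^a W f^b X e^c Y f^d Z)$ with $\{a,c\}=\{b,d\}=\{+,-\}$, so that $\alpha^{\{e,f\}}\sigma = (e^a W f^d Z e^c Y f^b X)$, the subwords $W,X,Y,Z$ moving intact. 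Everything then reduces to comparing the interlacement patterns of pairs of darts in these two cyclic words.

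Next I would normalise the signs. Reversing the direction of an edge $i\in\{e,f\}$ replaces $A(\vec\bB)$ by $A(\vec\bB)_{-i}$ and commutes with partial duality, and a short direct check shows $(A_{-i})*\{e,f\} = (A*\{e,f\})_{-i}$. Thus the claimed identity for $\vec\bB$ is equivalent to the identity for $\vec\bB$ with $e$ and/or $f$ reversed, which lets me assume $(a,b,c,d)=(+,+,-,-)$, i.e.\ $z=1$ and $\sigma = (e^+ W f^+ X e^- Y f^- Z)$. With this normalisation I would verify the blocks of $A(\vec\bB)*\{e,f\}$ in turn. The $(e,f)$-entry is immediate: restricting each word to $\{e^\pm,f^\pm\}$ turns $(e^+ f^+ e^- f^-)$ into $(e^+ f^- e^- f^+)$, flipping the sign as required. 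For $g\in E'=E-\{e,f\}$, the entries $A_{e,g}$ and $A_{f,g}$ are governed solely by which of the two $e$-arcs (resp.\ $f$-arcs) of $\sigma$ contain $g^+$ and $g^-$. Labelling each block $W,X,Y,Z$ by its pair of arc-membership bits, one checks that the $e$-partition of $\alpha^{\{e,f\}}\sigma$ is exactly the $f$-partition of $\sigma$, and vice versa (up to the orientation of the arcs); this yields at once $A(\vec\bB^{\{e,f\}})_{e,g} = -A_{f,g}$ and $A(\vec\bB^{\{e,f\}})_{f,g} = A_{e,g}$, as demanded by \eqref{twist2}.

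The main obstacle is the principal block indexed by $E'$, where I must show $A(\vec\bB^{\{e,f\}})_{g,h} = A_{g,h} + z(A_{f,g}A_{e,h} - A_{e,g}A_{f,h})$. The key observation is that passing from $\sigma$ to $\alpha^{\{e,f\}}\sigma$ changes the cyclic order of the four blocks from $W,X,Y,Z$ to $W,Z,Y,X$ while preserving the internal order of each block; that is, only the blocks $X$ and $Z$ are interchanged. Consequently the interlacement of $g$ and $h$ can change only through darts lying in $X$ and $Z$, and I would carry out a case analysis on the blocks containing $g^+,g^-,h^+,h^-$, using the quadrant labelling to keep the bookkeeping systematic. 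Skew-symmetry (so only one of $A(\vec\bB^{\{e,f\}})_{g,h}$ and $A(\vec\bB^{\{e,f\}})_{h,g}$ need be treated) together with the symmetry exchanging the roles of $e$ and $f$ reduces the genuinely distinct cases to a manageable few.

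In each case the correction term $A_{f,g}A_{e,h}-A_{e,g}A_{f,h}$ is seen to account exactly for the change in interlacement: it vanishes unless one of $g,h$ is interlaced with $f$ and the other with $e$ in the appropriate arcs, which is precisely the configuration in which a dart in $X$ and a dart in $Z$ reverse their relative cyclic position. This entry-by-entry comparison over the four blocks completes the identification $A(\vec\bB^{\{e,f\}}) = A(\vec\bB)*\{e,f\}$, and I expect the $E'$-block case analysis, rather than any conceptual difficulty, to be where the real work lies.
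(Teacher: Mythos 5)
Your proposal is correct and follows essentially the same route as the paper: both rest on Lemma~\ref{lem:walknewtree}'s block swap $(e^a W f^b X e^c Y f^d Z) \mapsto (e^a W f^d Z e^c Y f^b X)$, exploit sign/relabelling operations that commute with pivoting to cut down the configurations, and finish with an entry-by-entry verification of the three blocks of the pivoted matrix. The only differences are organisational: you dispatch the rows indexed by $e$ and $f$ via the arc-partition correspondence (slightly slicker than the paper's three-case check of restricted cyclic words), and you arrange the $E'$-block cases by block membership of $g^{\pm},h^{\pm}$ where the paper reduces to seven explicit restricted cyclic words --- the substance is the same.
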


\begin{proof}
Suppose that $\vec \bB=[\sigma,\alpha,\phi]$ and that \[\sigma = (e^a W f^b X e^c Y f^d Z),\] where $\{a,c\}=\{b,d\}=\{+,-\}$, and $W$, $X$, $Y$ and $Z$ are (possibly empty) subwords of $\sigma$ including all the darts arising from edges of $\bB$ other than $e$ and $f$. 
Now suppose that $\vec \bB*\{e,f\}=[\sigma',\alpha,\phi']$. Then from Lemma~\ref{lem:walknewtree}, we have
\[ \sigma' 
 = (e^a W f^d Z e^c Y f^b X).\]

The matrix operations discussed just before the lemma commute with pivoting in the following sense.
\begin{enumerate}
    \item For all $i$, $(A_{-i})*\{e,f\} = (A*\{e,f\})_{-i}$.
    \item For all distinct $i$ and $j$ with neither equal to $e$ or $f$, $(A_{i\leftrightarrow j})*\{e,f\} = (A*\{e,f\})_{i\leftrightarrow j}$.
    \item $(A_{e\leftrightarrow f})*\{e,f\} = (A*\{e,f\})_{e\leftrightarrow f}$.
    \item $(-A)*\{e,f\}=-(A*\{e,f\})$.
\end{enumerate}
These observations enable us to cut down the number of cases that we need to check. As both $A(\bB^{\{e,f\}})$ and $(A(\bB)*\{e,f\})$ are skew-symmetric, we focus only on the entries above the diagonal.

We first show that the entries in the 
rows and columns of $A(\vec \bB^{\{e,f\}})$ indexed by $e$ and $f$ are as claimed. Certainly 
$A(\bB^{\{e,f\}})_{e,f}=-A(\bB)_{e,f}$.

Now let $g$ be an edge of $\vec \bB$ other than $e$ or $f$. To verify that 
$A(\bB^{\{e,f\}})_{e,g}=(A(\bB)*\{e,f\})_{e,g}$ and
$A(\bB^{\{e,f\}})_{f,g}=(A(\bB)*\{e,f\})_{f,g}$,
it suffices to consider the cyclic permutations $\hat \sigma$ and $\hat \sigma'$ obtained from $\sigma$ and $\sigma'$, respectively, by removing all the darts other than $e^+$, $e^-$, $f^+$, 
$f^-$, $g^+$ and $g^-$. 
Because of the operations 1.--4. above that commute with pivoting we may assume that $\hat\sigma$ is one of $(e^+g^+g^-f^+e^-f^-)$, $(e^+g^+f^+g^-e^-f^-)$, and
$(e^+g^+f^+e^-g^-f^-)$. 
Then $\hat\sigma'$ is respectively $(e^+g^+g^-f^-e^-f^+)$, $(e^+g^+f^-e^-f^+g^-)$ and  $(e^+g^+f^-e^-g^-f^+)$, and one may check that 
$A(\bB^{\{e,f\}})_{e,g}=(A(\bB)*\{e,f\})_{e,g}$ and
$A(\bB^{\{e,f\}})_{f,g}=(A(\bB)*\{e,f\})_{f,g}$ in each case.

Now let $g$ and $h$ be distinct edges of $\vec 
\bG$ other than $e$ or $f$. To verify that $A(\bB^{\{e,f\}})_{g,h}=(A(\bB)*\{e,f\})_{g,h}$ it suffices to consider the cyclic permutations $\hat \sigma$ and $\hat \sigma'$ obtained from $\sigma$ and $\sigma'$, respectively, by removing all the darts other than $e^+$, $e^-$, $f^+$, 
$f^-$, $g^+$, $g^-$, $h^+$ and $h^-$. It is not difficult to check that if 
at least one of the edges in one of the sets $\{e,f\}$ and $\{g,h\}$ is not interlaced with at least one of the edges in the other set, then
\[ A(\bB^{\{e,f\}})_{g,h}=A(\bB)_{g,h}=(A(\bB)*\{e,f\})_{g,h}.\]
Because of this observation and the operations that commute with pivoting we may assume that $\hat\sigma$ is one of 
\[\begin{array}{ccc}
(e^+g^+f^+h^+e^-g^-f^-h^-),& 
(e^+g^+h^+f^+e^-g^-h^-f^-),& 
(e^+g^+h^+f^+e^-h^-g^-f^-),\\
(e^+g^+h^+f^+g^-e^-h^-f^-),& 
(e^+g^+h^+f^+h^-e^-g^-f^-),& 
(e^+g^+h^+f^+g^-e^-f^-h^-),\\ 
(e^+g^+h^+f^+h^-e^-f^-g^-). &&
\end{array}\]
In each case it may be checked that
$A(\bB^{\{e,f\}})_{g,h}=(A(\bB)*\{e,f\})_{g,h}$, as required. 
\end{proof}

The next lemma is the key step in showing that the critical group is well-defined.
\begin{lemma}\label{lem:smith}
Let $\vec \bB$ be a directed bouquet with interlaced edges $e$ and $f$. Then $A(\vec \bB)+I$ and $A(\vec \bB^{\{e,f\}})+I$ have the same invariant factors.
\end{lemma}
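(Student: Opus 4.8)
The plan is to show that $A(\vec\bB^{\{e,f\}})+I$ can be obtained from $A(\vec\bB)+I$ by a sequence of operations that preserve the Smith Normal Form, namely left- and right-multiplication by integer matrices that are invertible over $\ints$. By Lemma~\ref{lem:pivots} we know $A(\vec\bB^{\{e,f\}}) = A(\vec\bB)*\{e,f\}$, so the task reduces to a purely matrix-theoretic one: given a skew-symmetric PU matrix $A$ in the block form~\eqref{twist1} with $z\in\{-1,1\}$, prove that $(A*\{e,f\})+I$ and $A+I$ have the same invariant factors. The principal unimodularity of $A$ (Corollary~\ref{cor:matrixisPU}) will be essential, since it guarantees that the relevant $2\times 2$ pivot block is unimodular and hence that the transforming matrices have integer entries.

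The key computation is to exhibit explicit unimodular integer matrices $P$ and $Q$ with $P\bigl(A+I\bigr)Q = (A*\{e,f\})+I$, or at least $P(A+I)Q$ equal to $(A*\{e,f\})+I$ up to further SNF-preserving moves. First I would recall the general principal pivot transform identity: if we write $A = \begin{pmatrix} B & C \\ -C^t & A'\end{pmatrix}$ with $B = \begin{pmatrix} 0 & z \\ -z & 0\end{pmatrix}$ the $2\times 2$ block on $\{e,f\}$, then the pivot on the block $B$ produces the Schur-complement-type matrix whose lower-right block is $A' + C^t B^{-1} C$; here $B^{-1} = \begin{pmatrix} 0 & -z \\ z & 0\end{pmatrix}$ (since $z^2=1$), which recovers the formula $A'' = A' + z(v^t u - u^t v)$ in~\eqref{twist2}. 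The natural factorization to write down is then, in block form, a product of an integer elementary matrix acting on the left and one acting on the right, built from $B^{-1}$ and the off-diagonal blocks, that realises the pivot. Because $B$ is unimodular the entries of $B^{-1}$ are integers, so these factors are genuinely $\ints$-invertible.

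I expect the main obstacle to be handling the $+I$ term correctly: the clean pivot identities above are statements about $A$ itself, whereas we need them for $A+I$, and the pivot operation does not commute with adding the identity in any naive way. The careful approach is to construct the transformation at the level of $A+I$ directly, treating $A+I = \begin{pmatrix} B+I_2 & C \\ -C^t & A'+I\end{pmatrix}$ and computing the appropriate Schur-complement factorization of \emph{this} matrix. The upper-left block $B+I_2 = \begin{pmatrix} 1 & z \\ -z & 1\end{pmatrix}$ has determinant $1+z^2 = 2$, which is a genuine complication: it is not unimodular, so one cannot simply clear the off-diagonal blocks by Schur complementation over $\ints$. This is where I anticipate the real work lies, and it is likely the reason the statement is phrased in terms of invariant factors rather than via an explicit $PAQ$ equality. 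I would therefore expect the honest route to be to compute the Smith Normal Forms of both matrices indirectly — for instance by relating their nonzero minors or their cokernels — rather than by a single congruence; one natural tool is to use the product formula $\det\bigl((A*\{e,f\})+I\bigr) = \det(A+I)$, which should follow from the pivot structure, and then to promote this determinant equality to an equality of all invariant factors using the PU property to control every principal minor.

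As a fallback plan, if the direct factorization proves too delicate, I would instead argue at the level of the cokernels: show that the cokernel $\ints^{|E|}/\langle A+I\rangle$ is isomorphic to $\ints^{|E|}/\langle (A*\{e,f\})+I\rangle$ by exhibiting an explicit isomorphism of the two quotient modules, constructed from the pivot data, and then invoke the fact that isomorphic finitely generated abelian groups have the same invariant factors. In either approach the crucial inputs are Lemma~\ref{lem:pivots}, which identifies the pivot as the partial-dual operation, and Corollary~\ref{cor:matrixisPU}, which supplies the integrality needed to keep all intermediate matrices invertible over $\ints$; the delta-matroid heritage of the pivot (the principal pivot transform) is exactly what ensures these good integrality properties.
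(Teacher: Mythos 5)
Your reduction via Lemma~\ref{lem:pivots} is the same first step as the paper's, and your diagnosis of the obstacle is accurate: a block Schur-complement pivot on the $2\times 2$ block $\begin{pmatrix}1&z\\-z&1\end{pmatrix}$ of $A+I$ fails because that block has determinant $2$. But the conclusion you draw from this — that no explicit $\ints$-invertible factorization exists and the statement must be proved "indirectly" — is wrong, and the two indirect routes you substitute are not proofs. Equality of determinants does not determine invariant factors ($\diag(2,2)$ and $\diag(1,4)$ have equal determinant), and principal unimodularity controls principal minors of $A$, whereas the invariant factors of $A+I$ are governed by the gcds of \emph{all} $k\times k$ minors of $A+I$; no mechanism is given, and none exists, for "promoting" the determinant identity using the PU property. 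Your fallback — "exhibit an explicit isomorphism of the cokernels from the pivot data" — is a restatement of the goal rather than an argument; constructing such an isomorphism is essentially the same task as constructing the unimodular $P,Q$ you set aside.

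The missing idea is that the explicit factorization does exist, provided one never pivots on the determinant-$2$ block. The paper's proof: after normalizing $z=1$ (negate the row and column indexed by $f$), swap rows $e,f$ and columns $e,f$; then for each $h\in E'$ subtract $u_h$ times the \emph{sum} of columns $e$ and $f$ from column $h$; then for each $g\in E'$ add $u_g$ times the \emph{difference} of row $f$ and row $e$ to row $g$. Every step is an elementary integer row or column operation — the vector $(0,2,-(u+v))^t$ that arises is only ever multiplied by integers and added, never divided by — and a direct computation shows the result is exactly $A*\{e,f\}+I$ up to negating the rows and columns indexed by $e$ and $f$, which also preserves invariant factors. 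So the lemma is phrased in terms of invariant factors not because a $P(A+I)Q$ identity is unavailable, but because that is the natural invariant; the paper's proof is precisely such an identity, obtained by a bespoke sequence of operations adapted to the skew-symmetric structure rather than by generic block pivoting.
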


\begin{proof}
By Lemma~\ref{lem:pivots} 
it is sufficient to show that $A(\vec \bB)+I$ and $A(\vec \bB)*\{e,f\}+I$ have the same invariant factors. Suppose that
\[ A(\vec \bB)+I= \begin{pNiceMatrix}[first-row,first-col] &e&f&E' \\ e& 1 & z & u \\ f&-z & 1 & v \\ E'&-u^t & -v^t & A'
\end{pNiceMatrix}.\]
If necessary, by multiplying the row indexed by $f$ by $-1$ and then the column indexed by $f$ by $-1$, operations which do not change the invariant factors, we may assume that $z=1$. 

Interchanging the rows indexed by $e$ and $f$, and then interchanging the columns indexed by $e$ and $f$ gives
\[\begin{pNiceMatrix}[first-row,first-col] &e&f&E' \\ e& 1 & -1 & v \\ f&1 & 1 & u \\ E'&-v^t & -u^t & A'
\end{pNiceMatrix}.\]
Now for each $h$ in $E'$, subtract $u_h$ times the sum of the columns indexed by $e$ and $f$ from the column indexed by $h$ to give
\[\begin{pNiceMatrix}[first-row,first-col] &e&f&E' \\ e& 1 & -1 & v \\ f&1 & 1 & -u \\ E'&-v^t & -u^t & B
\end{pNiceMatrix},\]
where $B_{g,h}=A'_{g,h}+(u_g+v_g)u_h$.  Finally for each $g$ in $E'$, add $u_g$ times the difference between the row indexed by $f$ and the row indexed by $e$  to the row indexed by $g$ to give
\[\begin{pNiceMatrix}[first-row,first-col] &e&f&E' \\ e& 1 & -1 & v \\ f&1 & 1 & -u \\ E'&-v^t & u^t & B'
\end{pNiceMatrix},\]
where 
\[B'_{g,h}=B_{g,h}+u_g(-u_h-v_h)= A'_{g,h}+(u_g+v_g)u_h + u_g(-u_h-v_h).\]
Thus $B'=A'+z(v^tu-u^tv)$, and the matrix we have obtained is $A(\vec \bB)*\{e,f\}+I$. Thus $A(\vec \bB)+I$ and 
$A(\vec \bB)*\{e,f\}+I$ have the same invariant factors.
\end{proof}

Before finally proving Theorem~\ref{adhk}, we recall that if $T_1$ and $T_2$ are the edge sets of spanning quasi-trees of a map, then $|T_1\bigtriangleup T_2|$ is even. This can be seen by noting that Proposition~\ref{prop:onevertex} implies that both $\alpha^{T_1}\sigma$ and $\alpha^{T_2}\sigma$ are cyclic. As 
$\alpha^{T_2}\sigma =
\alpha^{T_1\bigtriangleup T_2}\alpha^{T_1}\sigma$, 
the permutation 
$\alpha^{T_1\bigtriangleup T_2}$ must be even.

\begin{proof}[Proof of Theorem~\ref{adhk}]
First $A(\vec \bG_2,T_2)+I$ may be obtained from 
$A(\vec \bG_1,T_2)+I$ by multiplying some of the rows and columns by $-1$, an operation which does not change the invariant factors. Thus the cokernels of $A(\vec \bG_1,T_2)+I$ and $A(\vec \bG_2,T_2)+I$ are isomorphic.

We now prove by induction on $|T_1\bigtriangleup T_2|$ that 
$A(\vec \bG_1,T_1)+I$ and $A(\vec \bG_1,T_2)+I$ have the same invariant factors. 
Let $\vec \bB_1$ and $\vec \bB_2$ be the directed bouquets $\vec \bG_1^{T_1}$ and $\vec \bG_1^{T_2}$, and note that  $\vec \bB_2 = \vec \bB_1^{T_1 \bigtriangleup T_2}$.
If $T_1=T_2$, then there is nothing to prove, so we may suppose that $T_1 \ne T_2$.

By the first part of Proposition~\ref{prop:onevertex},
$|T_1\bigtriangleup T_2|$ is the edge set of a spanning quasi-tree of $\vec \bB_1$, so by Theorem~\ref{thm:Bouchetbouq}, the matrix $A(\vec \bB_1)[T_1\bigtriangleup T_2]$ is invertible. Hence it must contain at least one non-zero entry implying that there are edges $e$ and $f$ in $T_1\bigtriangleup T_2$ which are interlaced in $\vec\bB_1$. 
By Lemma~\ref{lem:easyobsinterlace},
$\{e,f\}$ is the edge set of a spanning quasi-tree of $\vec\bB_1$.
Let $T_3:= T_1 \bigtriangleup \{e,f\}$
and $\vec\bB_3:=\vec\bB_1^{\{e,f\}}=\vec\bG_1^{T_3}$. By Lemma~\ref{lem:smith}, 
$A(\vec \bB_1)+I$ and
$A(\vec \bB_3)+I$ have the same invariant factors, so  Lemma~\ref{lem:dualtree} implies that $A(\vec \bG_1,T_1)+I$ and
$A(\vec \bG_1,T_3)+I$
have the same invariant factors. Now $|T_3 \bigtriangleup T_2| < |T_1 \bigtriangleup T_2|$, so we may apply the inductive hypothesis to deduce that 
$A(\vec \bG_1,T_3)+I$ and
$A(\vec \bG_1,T_2)+I$
have the same invariant factors. Hence
$A(\vec \bG_1,T_1)+I$ and
$A(\vec \bG_1,T_2)+I$
have the same invariant factors.
\end{proof}

Our next result generalizes work of Cori and Rossin~\cite[Theorem~2]{MR1756151} who established the case where $\bG$ is a plane map and also of Berman~\cite[Proposition~4.1]{MR819700} who essentially proved the plane case but without mentioning critical groups.
\begin{theorem}\label{dualiso}
Let $\bG$ be a connected map and $A$ be a subset of its edges. Then the critical groups of $\bG$ and $\bG^A$ are isomorphic. In particular, the critical groups of $\bG$ and $\bG^*$ are isomorphic.
\end{theorem}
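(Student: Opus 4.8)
The plan is to show that, for compatibly chosen data, the matrices defining $K(\bG)$ and $K(\bG^A)$ are literally the same, so that their cokernels coincide; the statement for the geometric dual then follows by setting $A=E$. First I would fix the edge set $T$ of a spanning quasi-tree of $\bG$ and direct the edges of $\bG$ arbitrarily to obtain a directed map $\vec{\bG}$. By the second part of Proposition~\ref{prop:onevertex}, $T\bigtriangleup A$ is the edge set of a spanning quasi-tree of $\bG^A$; since a map possessing a spanning quasi-tree is necessarily connected, this simultaneously records that $\bG^A$ is connected, so that $K(\bG^A)$ is defined. By Theorem~\ref{adhk} and Definition~\ref{dcrit} we are then free to compute $K(\bG)$ from $A(\vec{\bG},T)+I$ and $K(\bG^A)$ from $A(\vec{\bG}^A,T\bigtriangleup A)+I$, where $\vec{\bG}^A$ denotes the partial dual of the directed map $\vec{\bG}$ (partial duality preserving the head/tail assignment, hence Convention~\ref{con1}).

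The crux is to identify these two matrices. By Lemma~\ref{lem:dualtree},
\[ A(\vec{\bG},T)=A(\vec{\bG}^{\,T}) \quad\text{and}\quad A(\vec{\bG}^A,T\bigtriangleup A)=A\bigl((\vec{\bG}^A)^{T\bigtriangleup A}\bigr), \]
so it suffices to show that the two bouquets $\vec{\bG}^{\,T}$ and $(\vec{\bG}^A)^{T\bigtriangleup A}$ are the same directed map. This reduces the theorem to the composition law for partial duality, $(\bG^A)^B=\bG^{A\bigtriangleup B}$, applied with $B=T\bigtriangleup A$, which yields $(\vec{\bG}^A)^{T\bigtriangleup A}=\vec{\bG}^{A\bigtriangleup(T\bigtriangleup A)}=\vec{\bG}^{\,T}$. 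Consequently $A(\vec{\bG},T)=A(\vec{\bG}^A,T\bigtriangleup A)$, so $A(\vec{\bG},T)+I=A(\vec{\bG}^A,T\bigtriangleup A)+I$, and the cokernels, hence the critical groups, coincide. Taking $A=E$ and recalling that $\bG^E=\bG^*$ then gives the statement for the geometric dual.

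The main point to verify carefully is the composition law $(\bG^A)^B=\bG^{A\bigtriangleup B}$, which the excerpt records only in the special case $B=A$ (the involution property). From the definition $\bG^A=[\alpha^A\sigma,\,\alpha,\,\sigma^{-1}\alpha^{A^c}]$, it suffices to establish the permutation identity $\alpha^B\alpha^A=\alpha^{A\bigtriangleup B}$: once this holds, the first two entries of $(\bG^A)^B$ and $\bG^{A\bigtriangleup B}$ agree, and in a map the face permutation is determined by the other two. The identity itself is a short dart-by-dart check: for a dart $d$ of an edge $e$, using that $\alpha$ restricts to the transposition of the two darts of $e$, one finds $\alpha^B\alpha^A(d)=\alpha(d)$ precisely when $e$ lies in exactly one of $A$ and $B$, and $\alpha^B\alpha^A(d)=d$ otherwise, which is exactly the action of $\alpha^{A\bigtriangleup B}$. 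I expect this bookkeeping to be the only genuine obstacle; everything else is an immediate assembly of Proposition~\ref{prop:onevertex}, Lemma~\ref{lem:dualtree}, and the independence of choices guaranteed by Theorem~\ref{adhk}.
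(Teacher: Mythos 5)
Your proof is correct and takes essentially the same route as the paper's: choose $T$, use Proposition~\ref{prop:onevertex} to get that $T\bigtriangleup A$ is the edge set of a spanning quasi-tree of $\bG^A$, identify $(\bG^A)^{T\bigtriangleup A}$ with $\bG^T$, and conclude via Lemma~\ref{lem:dualtree} that the defining matrices coincide. The only difference is that you explicitly verify the composition law $(\bG^A)^B=\bG^{A\bigtriangleup B}$ through the permutation identity $\alpha^B\alpha^A=\alpha^{A\bigtriangleup B}$, a step the paper asserts without proof (``$(\bG^A)^{T\bigtriangleup A}$ and $\bG^T$ are the same bouquet'').
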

\begin{proof}
Let $T$ be the edge set of a spanning quasi-tree of $\bG$. Then by Proposition~\ref{prop:onevertex}, $T\bigtriangleup A$ is the edge set of a spanning quasi-tree of $\bG^A$. Moreover, $(\bG^A)^{T\bigtriangleup A}$ and $\bG^T$ are the same bouquet. Thus, by Lemma~\ref{lem:dualtree}, $A(\vec \bG,T)+I = A(\vec \bG^A,T\bigtriangleup A)+I$ and the result follows.
\end{proof}

In our final result of this section we show that the critical group is independent of the surface orientation. More precisely suppose that $\embG_1$ is a connected embedded graph and $\embG_2$ is obtained from $\embG_1$ by reversing the surface embedding. Let $\bG_1=[\sigma_1,\alpha_1,\phi_1]$ and $\bG_2=[\sigma_2,\alpha_2,\phi_2]$ be the connected maps obtained from $\embG_1$ and $\embG_2$ respectively. Then
$\sigma_2=\sigma_1^{-1}$ and $\alpha_2=\alpha_1$. So $\phi_2=\alpha_1\phi_1^{-1}\alpha_1$. We show that the critical groups of $\bG_1$ and $\bG_2$ are isomorphic.
Before proving the theorem, we note that if $\bB_1=[\beta,\alpha,\beta^{-1}\alpha]$ is a bouquet, then 
$\bB_2=[\alpha^T\beta^{-1}\alpha^T,\alpha,\alpha^T\beta\alpha^{E-T}]$
is a bouquet. Moreover, if the edges of $\bB_1$ and $\bB_2$ are directed 
to give $\vec{\bB}_1$ and $\vec{\bB}_2$, respectively, 
so that the same darts are heads of edges in both directed bouquets, then  $A(\vec{\bB}_2)$ is obtained from $A(\vec{\bB}_1)^t$ 
by multiplying the elements in rows indexed by $T$ by $-1$ and then multiplying the elements in columns indexed by $T$ by $-1$.

\begin{theorem}\label{thm:planarsame}
Let $\bG_1=[\sigma,\alpha,\sigma^{-1}\alpha]$ and $\bG_2=[\sigma^{-1},\alpha,\sigma\alpha]$ be connected maps. Then the critical groups of $\bG_1$ and $\bG_2$ are isomorphic.
\end{theorem}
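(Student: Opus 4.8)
The plan is to realise both critical groups as cokernels of matrices attached to bouquets obtained from a \emph{common} spanning quasi-tree, and then to read off the relationship between these two matrices from the observation recorded immediately before the theorem. By Theorem~\ref{adhk} the critical group of a connected map is independent of the chosen spanning quasi-tree and of the chosen edge directions, so I am free to make convenient choices throughout. First I would fix a spanning quasi-tree $T$ of $\bG_1$, so that $\alpha^T\sigma$ is cyclic by Proposition~\ref{prop:onevertex}, and direct the edges of both $\bG_1$ and $\bG_2$ so that the same darts are the heads; write $\vec{\bG}_1$ and $\vec{\bG}_2$ for the resulting directed maps.

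Next I would check that $T$ is simultaneously a spanning quasi-tree of $\bG_2$ and identify the two associated bouquets. Since $\alpha^T\sigma$ is cyclic and $\alpha^T$ is an involution, $\alpha^T\sigma^{-1}=\alpha^T(\alpha^T\sigma)^{-1}\alpha^T$ is conjugate to $(\alpha^T\sigma)^{-1}$ and hence also cyclic; thus $T$ is a spanning quasi-tree of $\bG_2=[\sigma^{-1},\alpha,\sigma\alpha]$ as well. Writing $\vec{\bB}_1:=\vec{\bG}_1^{T}$ and $\vec{\bB}_2:=\vec{\bG}_2^{T}$, a direct comparison of the defining permutations shows that these are exactly the bouquets $\bB_1$ and $\bB_2$ of the preceding discussion (with $\beta=\alpha^T\sigma$): the face permutation $\alpha^T\beta\alpha^{E-T}=\sigma\alpha^{E-T}$ of $\bB_2$ agrees with the face permutation $\sigma\alpha^{E-T}$ of $\bG_2^{T}$. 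By Lemma~\ref{lem:dualtree} we then have $A(\vec{\bG}_i,T)=A(\vec{\bB}_i)$ for $i=1,2$, so that $K(\bG_i)$ is the cokernel of $A(\vec{\bB}_i)+I$.

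The noted observation supplies the crucial matrix relation: $A(\vec{\bB}_2)$ is obtained from $A(\vec{\bB}_1)^t$ by negating the rows and columns indexed by $T$. Encoding this as conjugation by the diagonal sign matrix $D_T=\diag(\epsilon_e)$ with $\epsilon_e=-1$ for $e\in T$ and $\epsilon_e=+1$ otherwise, and using $D_T^2=I$, I obtain $A(\vec{\bB}_2)+I=D_T\bigl(A(\vec{\bB}_1)^t+I\bigr)D_T=D_T\bigl(A(\vec{\bB}_1)+I\bigr)^t D_T$. The matrix $D_T$ is unimodular over $\ints$, so pre- and post-multiplying by it leaves the invariant factors unchanged, and a matrix and its transpose have the same Smith Normal Form. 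Hence $A(\vec{\bB}_1)+I$ and $A(\vec{\bB}_2)+I$ have the same invariant factors, so by the cokernel description in Subsection~\ref{danjkj} their cokernels are isomorphic; that is, $K(\bG_1)\cong K(\bG_2)$.

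The genuine content of the argument is the matrix identity relating $A(\vec{\bB}_2)$ and $A(\vec{\bB}_1)^t$, which is the observation stated just before the theorem; once that is in hand the proof is pure bookkeeping with the Smith Normal Form. I therefore expect the main obstacle to lie in that preliminary observation, namely the careful combinatorial check that reversing the surface orientation (replacing $\sigma$ by $\sigma^{-1}$) transposes the overlap data and flips signs exactly on the quasi-tree edges, together with the mild care needed to keep the head--tail assignments of $\vec{\bB}_1$ and $\vec{\bB}_2$ aligned so that the sign bookkeeping reduces to the clean conjugation by $D_T$.
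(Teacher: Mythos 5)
Your proposal is correct and follows essentially the same route as the paper's proof: the same choice of directions (same darts as heads), the same observation that $\alpha^T\sigma^{-1}=\alpha^T(\alpha^T\sigma)^{-1}\alpha^T$ makes $T$ a common spanning quasi-tree, the same appeal to the pre-theorem remark relating $A(\vec{\bB}_2)$ to $A(\vec{\bB}_1)^t$ with signs flipped on $T$, and the same Smith Normal Form conclusion. Your encoding of the sign flips as conjugation by the unimodular diagonal matrix $D_T$ is a slightly tidier way of doing the bookkeeping the paper does by hand, but the argument is the same.
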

\begin{proof}
Direct the edges of $\bG_1$ and $\bG_2$ so that the head of each edge is the same dart in each map. Observe that $T$ is the edge set of a spanning quasi-tree of $\bG_1$ if and only if it is the edge set of a spanning quasi-tree of $\bG_2$. Furthermore, $\alpha^T\sigma^{-1}=\alpha^T(\alpha^T\sigma)^{-1}\alpha^T$.
So, by the remarks before the theorem, $A(\bG_2,T)$ may be obtained from $A(\bG_1,T)^t$, and similarly $A(\bG_2,T)+I$ may be obtained from $A(\bG_1,T)^t +I$, by multiplying the elements in rows indexed by $T$ by $-1$ and then multiplying the elements in columns indexed by $T$ by $-1$. 
The matrices $A(\bG_1,T)+I$ and $A(\bG_1,T)^t +I$ have the same Smith Normal form, as one may obtain the Smith Normal form of $A(\bG_2,T)^t +I$ by applying the same sequence of operations as one would apply to obtain the Smith Normal form of  $A(\bG_1,T)+I$, but replacing each column operation by the corresponding row operation and vice versa. 
Finally we note that $A(\bG_2,T)+I$ has the same Smith Normal form as 
$A(\bG_1,T)^t +I$. So the critical groups of $\bG_1$ and $\bG_2$ are isomorphic, as required.
\end{proof}

\section{Connections with the classical critical group}\label{sec:connections}
In this section we compare the critical group of a map with the classical critical group of an abstract graph. We begin by showing that if $\bG$ is a plane map, then its critical group is isomorphic to the classical critical group of its underlying abstract graph.

\begin{lemma}\label{lem:remarksbefore}
Let $\vec \bG$ be a directed connected map and let $T$ be the edge set of a spanning tree of the   underlying abstract directed graph $\vec G$ of $\vec \bG$. Let $e$ be in $T$. Then the row of $A(\vec {\mathbb G},T)+I$ indexed by $e$ is the signed incidence vector of the fundamental cocycle $C^*(T,e)$ of $\vec G$.
\end{lemma}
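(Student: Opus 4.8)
The plan is to read off the entries of $A(\vec\bG,T)$ from the cyclic word $\alpha^T\sigma$, after first describing that word concretely in terms of the contour of the tree $T$. First I would note that, although Definition~\ref{sadbh} is stated for a spanning quasi-tree, a spanning tree $T$ of the underlying graph is automatically a spanning quasi-tree of $\vec\bG$: merging the cycles of $\sigma$ along the edges of $T$ (exactly the dart-merging used in the proof of Proposition~\ref{prop:onevertex}) combines all $v$ vertex-cycles into one, so $\alpha^T\sigma$ is cyclic and $T$ is a spanning quasi-tree by Proposition~\ref{prop:onevertex}. Hence $A(\vec\bG,T)+I$ is defined, and since $A$ is skew-symmetric its diagonal entry at $e$ is $0$, so $(A+I)_{e,e}=1$, which already matches the $e$-entry of the signed incidence vector of $C^*(T,e)$. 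It then remains to match the entries $A_{e,f}$ for $f\neq e$.

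The key structural step is to locate all darts of $\alpha^T\sigma$ relative to $e^+$ and $e^-$. Deleting $e$ splits $T$ into two subtrees, $T_1$ containing the head-vertex of $e$ (so $e^+$ sits at a $T_1$-vertex) and $T_2$ containing the tail-vertex (so $e^-$ sits at a $T_2$-vertex). Writing $\alpha^T\sigma=\alpha^{\{e\}}\bigl(\alpha^{T\setminus e}\sigma\bigr)$, the forest $T\setminus e$ has two components, so by the same merging analysis $\alpha^{T\setminus e}\sigma$ has exactly two cycles $C_1,C_2$, consisting respectively of all darts at $T_1$-vertices and all darts at $T_2$-vertices. Merging $C_1$ and $C_2$ through $\alpha^{\{e\}}$ then shows that, reading $\alpha^T\sigma$ cyclically starting at $e^+$, one traverses $C_1\setminus\{e^+\}$, reaches $e^-$, and then traverses $C_2\setminus\{e^-\}$. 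Thus the forward arc from $e^+$ to $e^-$ consists exactly of the $T_1$-darts and the complementary arc exactly of the $T_2$-darts.

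With this decomposition the conclusion is a direct read-off, since $C^*(T,e)$ is precisely the cut separating $V(T_1)$ from $V(T_2)$. A tree edge $g\neq e$ lies wholly in $T_1$ or $T_2$, so both its darts fall in one arc; $g$ is therefore not interlaced with $e$, giving $A_{e,g}=0=x_g$ (this zero/nonzero pattern can alternatively be extracted from Lemma~\ref{lem:easyobsinterlace}, as $T\bigtriangleup\{e,g\}$ is too small to be a quasi-tree). A non-tree edge $f$ yields a nonzero entry exactly when its two darts lie in different arcs, i.e. when $f$ crosses the cut, i.e. when $f\in C^*(T,e)$. For the sign, $A_{e,f}=+1$ means the restricted cyclic order is $(e^+f^+e^-f^-)$, which by the arc description happens exactly when $f^+$ lies in the $T_1$-arc, i.e. when the head of $f$ lies on the same side of the cut as the head of $e$; this is precisely the defining condition for $x_f=+1$, and $A_{e,f}=-1$ matches $x_f=-1$ symmetrically. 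For $f\notin C^*(T,e)$ both darts lie in one arc, giving $A_{e,f}=0=x_f$.

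I expect the main obstacle to be the careful bookkeeping of sign conventions: reconciling Convention~\ref{con1} (which dart of $e$ is the head) with the head-based rule defining the signed incidence vector of a cocycle, and checking that the direction in which the arc is traversed is the one that produces the $\pm1$ pattern of Definition~\ref{sadbh}. Once the arc decomposition is in place, the support and the signs both follow by inspection, so the genuine content is establishing that the $T_1$- and $T_2$-darts occupy the two arcs cut out by $e^+$ and $e^-$.
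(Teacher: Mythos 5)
Your proof is correct and follows essentially the same route as the paper's: both decompose $\alpha^T\sigma$ via $\alpha^{\{e\}}$ into the two cycles of $\alpha^{T\setminus\{e\}}\sigma$, identify those cycles with the two sides of the fundamental cut determined by $e$, and then read off the support and signs of the row from the resulting arc structure exactly as you do. The only minor difference is that the paper establishes the cycle--cut correspondence by invoking Lemma~\ref{lem:easyobsinterlace} together with the fact that no spanning quasi-tree has fewer edges than a spanning tree (so no tree edge can be interlaced with $e$), whereas you derive it directly from the component structure of the forest $T\setminus\{e\}$ --- the route you mention only parenthetically as an alternative.
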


\begin{proof}
Suppose that $\bG=[\sigma,\alpha,\phi]$ is the underlying map of $\vec \bG$.  
Then $\alpha^{\{e\}}\alpha^T  \sigma$ has two cycles, one containing $e^+$ and the other containing $e^-$. Let $C^+$ be the cycle containing $e^+$ and $C^-$ be the cycle containing $e^-$. Let $A:=A(\mathbb G,T)$.
For $f\ne e$, it is easy to check that
\[ A_{e,f} = \begin{cases} 1 & \text{if $f^+ \in C^+$ and $f^- \in C^-$,}\\
-1 & \text{if $f^+ \in C^-$ and $f^- \in C^+$,}\\
0 & \text{otherwise.}\end{cases}\]
By Lemma~\ref{lem:easyobsinterlace}, if $A_{e,f}\ne 0$, then $T\bigtriangleup \{e,f\}$ is the edge set of a spanning quasi-tree of $\bG$. But no spanning quasi-tree has fewer edges than a spanning tree, so 
$f \notin T$. Therefore no edge of $T$ other than $e$ has one dart in $C^+$ and the other in $C^-$.

We can write $\sigma = \alpha^{T-\{e\}}(\alpha^{e}\alpha^T\sigma)$. For each edge in $T-\{e\}$, its two constituent darts lie in the same cycle of $\alpha^{e}\alpha^T\sigma$, so we can partition the cycles of $\sigma$ into two sets $S^+$ and $S^-$, so that for $i\in \{+,-\}$ the darts belonging to one of the cycles in $S^i$ are precisely those belonging to $C^i$. The edges with one dart in $S^+$ and one dart in $S^-$ are precisely the edges of the fundamental cocircuit $C^* (T,e)$ of $\vec G$. For each such edge $f$, its sign is positive 
in the signed incidence vector 
if and only if $f^+$ is in $S^+$.

  Thus we see that the row of $A(\vec {\mathbb G},T)+I$ indexed by $e$ is the signed incidence vector of the fundamental cocycle $C^*(T,e)$.
\end{proof}

We will require the following well-known lemma on abstract directed graphs.
\begin{lemma}\label{lem:cycoplanar}
    Let $G$ be a directed connected abstract graph and let $T$ be the edge set of a spanning tree of $G$, containing edge $f$ but not edge $e$. 
Then $f \in C(T,e)$ if and only if $e \in C^*(T,f)$. Moreover when these both occur, $e$ and $f$ have the same sign in the signed incidence vector of $C(T,e)$ if and only if they have opposite signs in the signed incidence vector of $C^*(T,f)$.
\end{lemma}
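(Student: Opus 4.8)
The plan is to prove both parts directly from the tree structure of $T$ and the explicit sign conventions set out in Subsection~\ref{ss:class}. Write $h(g)$ and $t(g)$ for the head and tail of a directed edge $g$. Recall that $C(T,e)$ consists of $e$ together with the unique $t(e)$--$h(e)$ path in $T$, and that deleting the edge $f$ from $T$ splits it into two components; let $U_h$ and $U_t$ be the vertex sets of the components containing $h(f)$ and $t(f)$ respectively. Then $C^*(T,f)$ is exactly the set of edges of $G$ with one endpoint in $U_h$ and one in $U_t$, and $U_h$, $U_t$ are precisely the vertex sets of the two components of $G\backslash C^*(T,f)$.

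For the first (unsigned) part, I would observe that $f$ lies on the $T$-path joining the endpoints of $e$ if and only if those endpoints lie in different components of $T-f$, that is, one in $U_h$ and one in $U_t$. Since $f\neq e$ and $f\in T$, membership $f\in C(T,e)$ is exactly the statement that $f$ lies on that path. On the other hand, $e$ having one endpoint in $U_h$ and one in $U_t$ is precisely the condition $e\in C^*(T,f)$. Hence $f\in C(T,e)\iff e\in C^*(T,f)$.

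For the signed part, the key simplification is that each distinguished edge carries a canonical $+1$: walking around $C(T,e)$ in the direction of $e$ traverses $e$ forwards, so $e$ has sign $+1$ in $C(T,e)$; and $h(f)\in U_h$ by definition, so $f$ has sign $+1$ in $C^*(T,f)$. Thus ``$e$ and $f$ have the same sign in $C(T,e)$'' is equivalent to ``$f$ has sign $+1$ in $C(T,e)$'', and ``$e$ and $f$ have opposite signs in $C^*(T,f)$'' is equivalent to ``$e$ has sign $-1$ in $C^*(T,f)$''. So the whole second assertion reduces to showing that the sign of $f$ in $C(T,e)$ is $+1$ if and only if the sign of $e$ in $C^*(T,f)$ is $-1$.

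To finish, I would walk around $C(T,e)$ in the direction of $e$: from $t(e)$ along $e$ to $h(e)$, then back to $t(e)$ along the $T$-path $P$, which crosses $f$ exactly once because $f\in C(T,e)$. Since $e$ crosses the cut, exactly one of $h(e),t(e)$ lies in $U_h$, and I split into the two cases. If $h(e)\in U_h$ (so $e$ has sign $+1$ in $C^*(T,f)$), then $P$ crosses $f$ from $U_h$ to $U_t$, i.e.\ from $h(f)$ to $t(f)$, against $f$'s direction, giving $f$ sign $-1$ in $C(T,e)$; if instead $h(e)\in U_t$ (so $e$ has sign $-1$), then $P$ crosses $f$ from $t(f)$ to $h(f)$, with $f$'s direction, giving $f$ sign $+1$. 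In both cases the required equivalence holds. The only delicate point, and the step I would write most carefully, is matching the direction in which the cyclic walk crosses $f$ to the head/tail labelling of $f$ while keeping the two independent sign conventions straight (walk-consistency for cycles, head-component for cocycles); beyond this bookkeeping there is no real obstacle.
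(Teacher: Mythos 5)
Your proof is correct. Note that the paper itself offers no proof of this lemma at all: it is stated as ``well-known'' and used without further justification, so there is no argument in the paper to compare yours against. Your write-up supplies exactly the standard argument one would want. The three essential steps all check out: (i) the identification of $C^{\ast}(T,f)$ with the set of edges crossing the cut $(U_h,U_t)$ determined by $T-f$, together with the observation that $U_h$ and $U_t$ are the vertex sets of the components of $G\backslash C^{\ast}(T,f)$ (valid because each side is spanned by a component of $T-f$); (ii) the normalization that the distinguished edge always carries sign $+1$ ($e$ in $C(T,e)$ because the walk direction is by definition that of $e$, and $f$ in $C^{\ast}(T,f)$ trivially from the head-component convention), which correctly reduces the signed claim to ``$f$ has sign $+1$ in $C(T,e)$ iff $e$ has sign $-1$ in $C^{\ast}(T,f)$''; and (iii) the two-case analysis, where the crucial fact making the bookkeeping work is that $f$ is the \emph{only} edge of the tree path $P$ crossing the cut, so the walk from $h(e)$ back to $t(e)$ traverses $f$ exactly once and in the direction dictated by which side of the cut $h(e)$ lies on. Loops and parallel edges, which the paper permits, cause no trouble: a loop $e$ has both statements vacuously false, consistent with your argument since its endpoints lie in the same $U$-set.
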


It is now easy to verify the following theorem.
\begin{theorem}\label{prop:plane}
Let $\vec \bG$ be a directed, connected plane map and $T$ be the edge set of a spanning tree of $\vec \bG$. Then the signed cycle--cocycle  matrix of 
the underlying abstract directed graph of $\vec \bG$
and $T$ is $A(\vec \bG,T)+I$. 

Therefore the critical group of a plane map is isomorphic to the classical critical group of its underlying abstract graph.
\end{theorem}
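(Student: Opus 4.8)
The plan is to prove the first sentence of Theorem~\ref{prop:plane}---that the signed cycle--cocycle matrix equals $A(\vec \bG, T) + I$---by establishing a row-by-row equality, since the second sentence then follows immediately from Definition~\ref{dcrit} and the definition of the classical critical group as the cokernel of the signed cycle--cocycle matrix. First I would observe that because $\vec \bG$ is plane, a spanning tree of the underlying abstract graph $\vec G$ is the same thing as a spanning quasi-tree of the map, so $T$ may legitimately be used both to index the rows of the cycle--cocycle matrix $M(\vec G, T)$ and to form $A(\vec \bG, T)$. The matrix has two kinds of rows: those indexed by edges $e \in T$ (fundamental cocycle rows) and those indexed by edges $e \notin T$ (fundamental cycle rows), and I would handle these two cases separately.

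For the rows indexed by $e \in T$, the work is already essentially done by Lemma~\ref{lem:remarksbefore}, which shows that the row of $A(\vec \bG, T) + I$ indexed by $e \in T$ is exactly the signed incidence vector of the fundamental cocycle $C^*(T, e)$. Crucially, Lemma~\ref{lem:remarksbefore} is stated for an arbitrary directed connected map and a spanning \emph{tree} $T$ of its underlying graph, so it applies here; note that the diagonal $+I$ term supplies the $+1$ entry corresponding to $e \in C^*(T,e)$ itself, matching the convention that the cocycle contains $e$. So for these rows I can simply cite Lemma~\ref{lem:remarksbefore}.

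For the rows indexed by $e \notin T$, I need the corresponding statement for fundamental cycles: that the row of $A(\vec \bG, T) + I$ indexed by $e$ is the signed incidence vector of the fundamental cycle $C(T, e)$. Here is where planarity enters essentially and where the main obstacle lies. The natural route is duality: by Lemma~\ref{lem:dualtree} and the behaviour of $A$ under partial duality, passing to the geometric dual $\bG^*$ interchanges the roles of cycles and cocycles (edges in $T$ become edges outside the dual tree $E - T$ and vice versa), and for a plane map the dual is again plane with $E - T$ a spanning tree of $G^*$. Applying the already-proven cocycle case of Lemma~\ref{lem:remarksbefore} in $\vec \bG^*$ then reinterprets those rows back in $\vec \bG$ as fundamental cycle rows. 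The combinatorial dictionary needed to confirm that the dual cocycle signs translate to the correct primal cycle signs is supplied precisely by Lemma~\ref{lem:cycoplanar}, which relates membership and signs of $C(T,e)$ and $C^*(T,f)$; this is exactly the planar cycle--cocycle sign reciprocity. I expect the delicate part to be bookkeeping the orientation conventions through the dual, ensuring that the sign rules for $A_{e,f}$ (the interlacement pattern $(e^+f^+e^-f^-)$ versus $(f^+e^+f^-e^-)$) match up with the head-of-edge sign rule for signed incidence vectors after the cycle--cocycle swap, rather than any deep structural difficulty.

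Having matched both families of rows, every row of $A(\vec \bG, T) + I$ coincides with the corresponding row of the signed cycle--cocycle matrix $M(\vec G, T)$, so the two matrices are equal. The final assertion then follows at once: $K(\bG) = \ints^{|E|}/\langle A(\vec \bG, T) + I\rangle = \ints^{|E|}/\langle M(\vec G, T)\rangle = K(G)$, where the last equality is the definition of the classical critical group recalled in Subsection~\ref{ss:class}, with the independence of $M(\vec G, T)$ from the chosen directions and spanning tree (and of $K(\bG)$ from its choices, via Theorem~\ref{adhk}) guaranteeing this is well-posed.
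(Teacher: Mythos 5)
Your first half coincides exactly with the paper's proof: the rows indexed by edges of $T$ are handled by citing Lemma~\ref{lem:remarksbefore}, and the deduction of the critical-group statement at the end is the same immediate consequence. The divergence, and the gap, is in the rows indexed by $e \notin T$. Your route through the geometric dual can be set up correctly (indeed $A(\vec{\bG}^*, E-T) = A(\vec{\bG},T)$ since $(\bG^*)^{E-T}$ and $\bG^T$ are the same bouquet, $E-T$ is a spanning tree of the plane dual, and Lemma~\ref{lem:remarksbefore} applied to $\vec{\bG}^*$ identifies row $e$ with the signed incidence vector of the dual fundamental cocycle $C^*(E-T,e)$ of $\vec G^*$). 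But the step you defer as ``bookkeeping'' is the entire content of the case, and the tool you cite for it cannot do the job: Lemma~\ref{lem:cycoplanar} is a statement about fundamental cycles and cocycles inside a \emph{single} abstract directed graph, and it says nothing about how the head/tail conventions of the directed dual map (which keeps the same darts as heads and tails) relate geometrically to the primal conventions. Translating ``signed incidence vector of $C^*(E-T,e)$ in $\vec G^*$'' into ``signed incidence vector of $C(T,e)$ in $\vec G$'' requires exactly such a comparison across duality, a verification comparable in substance to the proof of Lemma~\ref{lem:remarksbefore} itself, and it is absent from your proposal.

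The paper avoids the dual altogether using an ingredient you never invoke: skew-symmetry of $A(\vec{\bG},T)$. For $e \notin T$ and $f \in T$, skew-symmetry gives $A_{e,f} = -A_{f,e}$, the already-proved cocycle case gives $A_{f,e} = M_{f,e}$, and Lemma~\ref{lem:cycoplanar} (used where it actually applies, inside $\vec G$) gives $M_{e,f} = -M_{f,e}$; hence $A_{e,f} = M_{e,f}$. For distinct $e,f \notin T$, one has $M_{e,f}=0$ since $C(T,e) \subseteq T \cup \{e\}$, while $A_{e,f}=0$ because planarity makes spanning quasi-trees coincide with spanning trees, so $T \bigtriangleup \{e,f\}$ is not the edge set of a spanning quasi-tree and Lemma~\ref{lem:easyobsinterlace} forces the entry to vanish; the diagonal entries of $A+I$ and $M$ are both $1$. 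If you wish to keep the detour through the dual, it can at most replace the support statement (which is sign-free: the dual cocycle is contained in $T \cup \{e\}$), but the sign identification must still come from skew-symmetry together with Lemma~\ref{lem:cycoplanar}, at which point the detour buys nothing. The repair is therefore short, but as written your key step is not justified by the tools you cite.
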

\begin{proof}
Let $M:=M(\vec {\mathbb G},T)$ and $A:=A(\vec{\mathbb G},T)$.
By Lemma~\ref{lem:remarksbefore}, for each edge $e$ of $T$, the rows corresponding to $e$ in $M$ and $A+I$ are equal.

Now suppose that $e\notin T$. 
As $M$ depends only on the underlying abstract directed graph of $\vec{\mathbb G}$,
Lemma~\ref{lem:cycoplanar} implies that
for each $f\ne e$
\[ M_{e,f} = \begin{cases} 0 & \text{if $f\notin T$,} \\ -M_{f,e} & \text{if $f\in T$.}\end{cases}\]
As $\vec{\mathbb G}$ is plane, the edge sets of the spanning quasi-trees of $\vec{\mathbb G}$ are precisely the edge sets of the spanning trees of the underlying abstract directed graph $\vec G$. So if $f \notin T$, then $T\bigtriangleup\{e,f\}$ is not the edge set of a spanning quasi-tree of $\vec{\mathbb G}$. Thus Lemma~\ref{lem:easyobsinterlace} implies that $A_{e,f}=0$. If $f\in T$, then as $A$ is skew-symmetric, $A_{e,f}=-A_{f,e}$. The first part of the theorem follows and the second is an immediate consequence.
\end{proof}

In Corollary~\ref{planeconv} we shall see that the critical group of a map coincides with the classical critical group of its underlying abstract graph if and only if the map is plane.

\begin{remark}
In~\cite[Proposition 5.7]{zbMATH01116184} Bouchet proved a result concerning multimatroids which may be translated to delta-matroids and then applied to maps as follows. Let $\bG$ be a connected map with edge set $E$ and let $T$ be the edge set of a spanning quasi-tree of $\bG$. For every edge $e$ of $E$ there is a unique minimal pair $(X,Y)$ of disjoint subsets of $E- \{e\}$ so that 
\begin{enumerate}
    \item if $e\in T$, then there is no set $T'$ with  
$X \subseteq T' \subseteq (E- \{e\})-Y$, so that $T'$ is the edge set of a spanning quasi-tree of $\bG$;

    \item if $e\notin T$, then there is no set $T'$ with  
$X\cup\{e\} \subseteq T' \subseteq E-Y$, so that $T'$ is the edge set of a spanning quasi-tree of $\bG$.
\end{enumerate}
Furthermore $T\bigtriangleup\{e,f\}$ is the edge set of a spanning quasi-tree of $\bG$ if and only if $f \in X\cup Y$. The set $X\cup Y\cup\{e\}$ is called the \emph{fundamental circuit of $T$ and $e$}. 

Combining these observations with Lemma~\ref{lem:easyobsinterlace}, we see that each row of 
$A(\vec \bG,T)+I$ is an appropriately signed incidence vector of the fundamental circuit of $T$ and the edge indexing the row. So the critical group is constructed using the row space of a matrix formed by appropriately signed fundamental circuits, in much the same way as it can be for abstract graphs. 
\end{remark}

\medskip
 
Let $G$ be a connected abstract graph with edge set $E$ of size $m$ and $T$ be the edge set of a spanning tree of $G$ with $r$ edges. Then, by Lemma~\ref{lem:cycoplanar},
 the abstract directed graph $\vec G$ has signed circuit--cocycle matrix  of the form
   \[
M(\vec G,T)=   \begin{pNiceMatrix}[first-row,first-col] &T&E- T\\
T&I_r& B \\
E-T&-B^t & I_{m-r}\\
\end{pNiceMatrix}.
\]

\begin{proposition}\label{prop:criel}
Let  $G=(V,E)$ be a connected abstract graph with $m$ edges and $T$ be the edge set of a spanning tree of $G$ with $r$ edges. Let $\vec G$ be any directed graph obtained from $G$ by giving a direction to each of its edges and let 
$B= M(\vec G,T)[T,E- T]$. Then, for the critical group $K(G)$ we have the following isomorphisms.
\[      K(G) \cong \ints^{m-r} /\langle B^t B+I_{m-r}\rangle  \cong \ints^{r} /\langle B B^t+I_{r}\rangle.\]
  \end{proposition}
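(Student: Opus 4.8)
The plan is to start from the block form of $M(\vec G,T)$ displayed just before the statement and reduce it to a block-diagonal matrix by integer row and column operations, reading off the two cokernels from the two diagonal blocks. The only general fact needed is recorded in Subsection~\ref{danjkj}: pre- or post-multiplying an integer matrix by a unimodular integer matrix leaves its Smith Normal Form, and hence its cokernel $\ints^m/\langle\,\cdot\,\rangle$, unchanged up to isomorphism. Since $K(G)=\ints^{m}/\langle M(\vec G,T)\rangle$ by definition, it therefore suffices to transform $M(\vec G,T)$ by such operations into a convenient normal form.

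For the first isomorphism I would clear the lower-left block. Writing $M:=M(\vec G,T)=\begin{pmatrix} I_r & B\\ -B^t & I_{m-r}\end{pmatrix}$ with rows and columns indexed by $T$ and $E-T$, left-multiplication by the unimodular matrix $\begin{pmatrix} I_r & 0 \\ B^t & I_{m-r}\end{pmatrix}$ adds $B^t$ times the tree-rows to the non-tree-rows and yields $\begin{pmatrix} I_r & B \\ 0 & B^tB+I_{m-r}\end{pmatrix}$; right-multiplication by the unimodular $\begin{pmatrix} I_r & -B \\ 0 & I_{m-r}\end{pmatrix}$ then clears the upper-right block, leaving $\diag(I_r,\,B^tB+I_{m-r})$. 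As the cokernel of a block-diagonal matrix is the direct sum of the cokernels of its blocks and $\ints^r/\langle I_r\rangle$ is trivial, this gives $K(G)\cong\ints^{m-r}/\langle B^tB+I_{m-r}\rangle$.

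The second isomorphism is obtained symmetrically, clearing the upper-right block first: left-multiplying $M$ by $\begin{pmatrix} I_r & -B \\ 0 & I_{m-r}\end{pmatrix}$ produces $\begin{pmatrix} BB^t+I_r & 0 \\ -B^t & I_{m-r}\end{pmatrix}$, and then right-multiplying by $\begin{pmatrix} I_r & 0 \\ B^t & I_{m-r}\end{pmatrix}$ clears the lower-left block, giving $\diag(BB^t+I_r,\,I_{m-r})$ and hence $K(G)\cong\ints^{r}/\langle BB^t+I_r\rangle$.

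Regarding difficulty, there is essentially no obstacle here: this is routine block Gaussian elimination, and all the substantive content --- in particular the fact that the lower-left block is exactly $-B^t$ --- is already supplied by Lemma~\ref{lem:cycoplanar} and the block form preceding the statement. The only points requiring care are bookkeeping ones: checking that each elementary block matrix used is integer-valued with determinant $1$ (so that the operations preserve the cokernel, not merely its order), and keeping the signs consistent so that the two off-diagonal cancellations indeed produce $B^tB+I$ and $BB^t+I$ rather than sign variants. I would also note in passing that the argument uses neither planarity nor loop-freeness beyond what is already built into the displayed form of $M(\vec G,T)$.
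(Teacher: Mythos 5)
Your proposal is correct and follows essentially the same route as the paper: the first isomorphism uses exactly the same pair of unimodular block matrices $\begin{pmatrix} I_r & 0 \\ B^t & I_{m-r}\end{pmatrix}$ and $\begin{pmatrix} I_r & -B \\ 0 & I_{m-r}\end{pmatrix}$ to reduce $M(\vec G,T)$ to $\diag(I_r,\,B^tB+I_{m-r})$, and your symmetric elimination for the second isomorphism is just the explicit version of the paper's ``a similar argument.'' The only cosmetic difference is that the paper phrases the conclusion via invariant factors of the Smith Normal Form rather than via the cokernel of a block-diagonal matrix; these are equivalent.
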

 
  \begin{proof} 
  Recall that $K(G):=  \ints^{m} / \langle M(\vec G,T) \rangle$.
 We have  
   \[
\begin{pmatrix} 
I_r& 0 \\
B^t & I_{m-r}\\
\end{pmatrix}
\begin{pmatrix} 
I_r& B \\
-B^t & I_{m-r}\\
\end{pmatrix}
\begin{pmatrix} 
I_r& -B \\
0 & I_{m-r}\\
\end{pmatrix}
=
\begin{pmatrix} 
I_r& 0 \\
0 & B^t B+I_{m-r}\\
\end{pmatrix}.
\]
As the integer matrices  
$\begin{pmatrix} 
I_r& 0 \\
B^t & I_{m-r}\\
\end{pmatrix}$ and
$\begin{pmatrix} 
I_r& -B \\
0 & I_{m-r}\\
\end{pmatrix}$
are unimodular, the matrices $M(\vec G,T)$ and
$B^t B+I_{m-r}$
have the same non-trivial invariant factors. Thus 
$K(G) \cong \ints^{m-r} /\langle B^t B+I_{m-r}\rangle$. A similar argument shows that $K(G) \cong \ints^{r} /\langle B B^t+I_{r}\rangle$.
 \end{proof}

 \medskip
  
 If an abstract loopless graph $G$ has an apex vertex $q$, that is a vertex sending a single edge to every other vertex, then we may take $T$ to be the spanning tree whose edges are the edges of $G$ incident with $q$. Then, 
 the matrix $B B^t+I_r$ is just  the reduced Laplacian of the graph $\Delta_q(G)$ 
 and  we get the usual presentation of the critical group as 
 $K(G)\cong \ints^r/\langle \Delta_q(G)\rangle$, see~\cite{MR1676732}. We extend this approach to maps to get another presentation of the critical group of a map.

Lemma~\ref{lem:cycoplanar} implies the following.
\begin{lemma}
 Let $\vec \bG$ be a directed, connected map with edge set $E$ and $T$ be the edge set of a spanning tree of $\vec \bG$ with $r$ edges. Let $G$ be the underlying abstract directed  graph of $\vec \bG$ and let $B=M(\vec G,T)[T, E- T]$. Then 
 \[
    A(\vec \bG,T)+I= \begin{pNiceMatrix}[first-row,first-col] &T&E- T\\
T&I_r& B \\
E- T&-B^t & D\\
\end{pNiceMatrix}.
\]
Moreover, the matrix $(I_r\,|\, B)$ is the totally unimodular representation of the cycle matroid of the underlying abstract graph of $\bG$ and $D':=D-I$, is PU.
 \end{lemma}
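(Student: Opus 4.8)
The plan is to prove the three assertions of the lemma in order, each reducing to an earlier structural result. First, for the block form of $A(\vec{\bG},T)+I$, I would use Lemma~\ref{lem:remarksbefore} to handle the rows indexed by $T$: that lemma states precisely that for $e \in T$, the row of $A(\vec{\bG},T)+I$ indexed by $e$ is the signed incidence vector of the fundamental cocycle $C^*(T,e)$. Comparing with the displayed form of $M(\vec G,T)$ given just before Proposition~\ref{prop:criel}, the top block $(\,I_r \mid B\,)$ of $M(\vec G,T)$ is exactly the collection of these signed cocycle incidence vectors, so the top $r$ rows of $A(\vec{\bG},T)+I$ agree with the top rows of $M(\vec G,T)$, giving the blocks $I_r$ and $B$. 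For the bottom-left block, since $A(\vec{\bG},T)$ is skew-symmetric (as noted after Definition~\ref{sadbh}) and the off-diagonal $T$-by-$(E-T)$ entries of $I$ vanish, the $(E-T,T)$ block is the negative transpose of the $(T,E-T)$ block, namely $-B^t$. The bottom-right block is then simply named $D$, so this part is essentially a matter of assembling these observations; I do not expect it to be the main obstacle.

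For the second assertion, that $(I_r \mid B)$ is the totally unimodular representation of the cycle matroid of the underlying abstract graph, I would appeal to the standard fact from matroid theory that for a graph $G$ with spanning tree $T$, the matrix $(I_r \mid B)$, where $B$ records the fundamental cocycle (equivalently fundamental cycle) data, is the standard totally unimodular representation of the cycle matroid $M(G)$. Here $B = M(\vec G,T)[T,E-T]$ consists of $\pm 1,0$ entries by construction, and Lemma~\ref{lem:cycoplanar} gives the correspondence between the fundamental cycle and cocycle signings. This is a well-known classical fact, so I would state it with a brief reference to the representability of graphic matroids rather than reprove total unimodularity from scratch.

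The genuinely substantive part, and the step I expect to be the main obstacle, is showing that $D' := D - I$ is PU. The natural strategy is to relate $D$ to a matrix of the form $A(\vec\bB)$ for a bouquet $\vec\bB$, since Corollary~\ref{cor:matrixisPU} tells us that all matrices $A(\vec\bG,T)$ are PU, and PU-ness is exactly what we need. The key idea is that the bottom-right block $D$ should itself be (up to the identity correction) a matrix of the type $A(\vec\bG',T')$ for a suitably contracted or restricted map. Concretely, I would identify $D' = D - I$ with the submatrix $A(\vec\bG,T)[E-T]$, which is a principal submatrix of the PU matrix $A(\vec\bG,T)$; since every principal submatrix of a PU matrix is PU (a principal submatrix of a principal submatrix is again a principal submatrix of the original), this would immediately give that $D'$ is PU. The work lies in verifying that $D = A(\vec\bG,T)[E-T] + I_{m-r}$, i.e. that the diagonal of the $(E-T)$ block of $A(\vec\bG,T)+I$ contributes the $I$ and the off-diagonal entries match, which follows directly from reading off the block decomposition established in the first part together with skew-symmetry of $A(\vec\bG,T)$. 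Once $D' = A(\vec\bG,T)[E-T]$ is established, PU-ness is immediate from Corollary~\ref{cor:matrixisPU}.

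In summary, the whole argument is an assembly of three ingredients: Lemma~\ref{lem:remarksbefore} for the rows indexed by $T$, skew-symmetry of $A(\vec\bG,T)$ to fill in the lower-left block by transposition, and Corollary~\ref{cor:matrixisPU} (principal submatrices of PU matrices are PU) for the claim about $D'$. The only point requiring care is the correct bookkeeping of which diagonal entries the added $I$ affects, so that $D'$ is recognized exactly as the principal submatrix $A(\vec\bG,T)[E-T]$ rather than something that differs from it by a diagonal correction.
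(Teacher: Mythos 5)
Your proposal is correct and follows essentially the same route the paper intends: the paper gives no explicit proof (it simply states that Lemma~\ref{lem:cycoplanar} implies the result), and the natural argument is exactly your assembly of Lemma~\ref{lem:remarksbefore} for the rows indexed by $T$, skew-symmetry of $A(\vec\bG,T)$ for the $-B^t$ block, the classical total-unimodularity fact for $(I_r\,|\,B)$, and the identification $D'=A(\vec\bG,T)[E-T]$ together with Corollary~\ref{cor:matrixisPU} (principal submatrices of PU matrices are PU). Your bookkeeping of where the added identity lands on the diagonal is also exactly right.
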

Thus, the information needed to recover $A(\vec \bG,T)+I$ is the matrices $B$ and $D'$.  
 
\begin{theorem}\label{thm:pet}
Let $\vec \bG$ be a directed, connected map with edge set $E$ comprising $m$ edges, and let $T$ be the edge set of a spanning tree of $\vec \bG$ with $r$ edges. 
Let $B=A(\vec \bG,T)[T, E- T]$, $D'=A(\vec \bG,T)[E-T]$ and $D=D'+I_{m-r}$. Then 
\[ K(\bG) \cong  \ints^{m-r}/\langle B^t B+D\rangle. \]
\end{theorem}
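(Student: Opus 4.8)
The plan is to reduce the block matrix by the same unimodular row and column operations used in the proof of Proposition~\ref{prop:criel}, now carrying the general block $D$ through the computation in place of the identity. By the preceding lemma, $K(\bG)=\ints^m/\langle A(\vec\bG,T)+I\rangle$ is the cokernel of
\[
M = \begin{pmatrix} I_r & B \\ -B^t & D\end{pmatrix},
\]
whose rows and columns are indexed by $T$ and $E-T$, so it suffices to compute the Smith Normal Form of $M$.

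First I would clear the lower-left block. Left-multiplying $M$ by the unimodular integer matrix $L:=\left(\begin{smallmatrix} I_r & 0 \\ B^t & I_{m-r}\end{smallmatrix}\right)$ gives
\[
LM = \begin{pmatrix} I_r & B \\ 0 & B^tB+D\end{pmatrix}.
\]
Next I would clear the upper-right block: right-multiplying by the unimodular integer matrix $R:=\left(\begin{smallmatrix} I_r & -B \\ 0 & I_{m-r}\end{smallmatrix}\right)$ gives
\[
LMR = \begin{pmatrix} I_r & 0 \\ 0 & B^tB+D\end{pmatrix}.
\]
Since pre- and post-multiplication by unimodular integer matrices preserves the Smith Normal Form, $M$ and $\diag(I_r,\, B^tB+D)$ have isomorphic cokernels. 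The $I_r$ block contributes only trivial invariant factors, as $\ints^r/\langle I_r\rangle$ is the zero group, so
\[
K(\bG)\cong \ints^r/\langle I_r\rangle \oplus \ints^{m-r}/\langle B^tB+D\rangle \cong \ints^{m-r}/\langle B^tB+D\rangle,
\]
as required.

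The genuine content lies in the preceding lemma, which supplies the block structure of $A(\vec\bG,T)+I$; in particular that its top-left block is exactly $I_r$, equivalently that $A(\vec\bG,T)_{e,f}=0$ for distinct $e,f\in T$. This is where the hypothesis that $T$ is the edge set of a \emph{spanning tree} (rather than merely a spanning quasi-tree) is used: by Lemma~\ref{lem:easyobsinterlace} a non-zero such entry would force $T\bigtriangleup\{e,f\}$, a set of only $r-2$ edges, to be the edge set of a spanning quasi-tree, which is impossible since spanning quasi-trees have at least $r$ edges. Granting this structure, the argument above is an obstacle-free generalization of Proposition~\ref{prop:criel}; the one point worth noting is that the Schur-complement computation produces $B^tB+D$ for an arbitrary integer block $D$, so no property of $D$ beyond integrality is needed.
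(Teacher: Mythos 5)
Your proof is correct and takes essentially the same approach as the paper: both start from the block form $A(\vec\bG,T)+I=\left(\begin{smallmatrix} I_r & B\\ -B^t & D\end{smallmatrix}\right)$ given by the lemma preceding the theorem, and reduce to the Schur complement $B^tB+D$ by unimodular integer operations (the paper post-multiplies by $\left(\begin{smallmatrix} I_r & -B\\ 0 & I_{m-r}\end{smallmatrix}\right)$ and reads off the invariant factors from the resulting block-triangular matrix, while you additionally clear the remaining off-diagonal block, which is the same computation made fully explicit). Your justification that the $T\times T$ block is $I_r$ -- via Lemma~\ref{lem:easyobsinterlace} and the fact that no spanning quasi-tree has fewer edges than a spanning tree -- is exactly the argument the paper uses in proving Lemma~\ref{lem:remarksbefore}.
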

\begin{proof}
Postmultiplying  
$A(\vec \bG,T)+I$ by the unimodular matrix $\begin{pmatrix}I_r& -B \\
0 & I_{m-r}
\end{pmatrix}
$ yields
  \[
\begin{pmatrix} 
I_r& B \\
-B^t & D
\end{pmatrix}
\begin{pmatrix} 
I_r& -B \\
0 & I_{m-r}
\end{pmatrix}
=
\begin{pmatrix} 
I_r& 0 \\
-B^t & D+B^t B
\end{pmatrix}.
\]
Thus the matrices $A(\vec \bG,T)+I$ and $D+B^t B$ have the same invariant factors, except possibly for the addition or removal of factors equal to one.
\end{proof}
 
Taken together, the previous theorem and Proposition~\ref{prop:criel} explain formally our intuition that the critical group of a map is the classical critical group of the underlying abstract graph modified by a perturbation due to the topological information describing the embedding.

\section{Enumerating quasi-trees and the order of the critical group}
\label{sec:qtrees}

The spanning trees of an abstract graph  play an important role in the theory of the classical critical group. In particular, the order of the critical group of an abstract graph is equal to its number of spanning trees. 
Key to this fact are the connections between the reduced Laplacian and spanning trees, as demonstrated by the Matrix--Tree Theorem (stated here as Theorem~\ref{kmt}). 

Recent work in the areas of graph polynomials~\cite{zbMATH06824436,zbMATH05960754,zbMATH07395405, zbMATH05876837} and matroids~\cite{zbMATH07094555,zbMATH07055741} hints at a general principle that spanning quasi-trees play the role of spanning trees when changing setting from abstract graph theory to topological graph theory. In this section we see further evidence towards such a general principle by demonstrating intimate relationships between spanning quasi-trees, the matrices $A(\vec{\bG},T)$ and the critical groups for maps that run parallel to the classical relationships between spanning trees, the reduced Laplacian $\Delta^q(G)$, and the critical group of an abstract graph.   

\medskip

We begin with the surprising result that that the characteristic polynomial of $A(\vec{\bB})$ enumerates the spanning quasi-trees of a bouquet $\bB$  with respect to genus. We use $P_A(t)$ to denote the \emph{characteristic polynomial}, $\det(t I_m-A)$, of an $m\times m$ matrix $A$.

  \begin{theorem}\label{charpolycount}
  Let $\bB$ be a bouquet with edge set $E$, and suppose that $\vec{\bB}$ is obtained by arbitrarily directing the edges of $\bB$.  Then 
     \[
 t^{|E|}  P_{A(\vec{\bB})}(t^{-1}) =   \sum_{\bT} t^{2g(\bT)},
 \]
   where the sum is over all spanning quasi-trees of $\bB$, and where $g(\bT)$ denotes the genus of $\bT$.
 \end{theorem}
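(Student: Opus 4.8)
The plan is to show that both sides of the claimed identity equal the generating function $\sum_{S} t^{|S|}$, where $S$ ranges over the edge sets of spanning quasi-trees of $\bB$, by combining the principal-minor expansion of the characteristic polynomial with Corollary~\ref{cor:bouquets}.

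First I would rewrite the left-hand side in a more convenient form. Writing $m:=|E|$ and factoring a $t$ out of each of the $m$ rows of $t^{-1}I_m - A(\vec{\bB})$, we obtain
\[ t^{m} P_{A(\vec{\bB})}(t^{-1}) = t^{m}\det\!\big(t^{-1}I_m - A(\vec{\bB})\big) = \det\!\big(I_m - t\,A(\vec{\bB})\big). \]
Next I would expand this determinant over principal submatrices. The standard fact that the coefficients of the characteristic polynomial are, up to sign, the sums of the principal minors gives
\[ \det\!\big(I_m - t\,A(\vec{\bB})\big) = \sum_{S\subseteq E} (-t)^{|S|}\,\det\!\big(A(\vec{\bB})[S]\big). \]

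At this point Corollary~\ref{cor:bouquets} does the main work: the minor $\det(A(\vec{\bB})[S])$ equals $1$ when $S$ is the edge set of a spanning quasi-tree of $\bB$ and vanishes otherwise, so only the spanning-quasi-tree terms survive. To match the exponent on the right-hand side, I would compute the genus of such a spanning quasi-tree $\bT=\bB[S]$ using the genus formula~\eqref{eqn:genus}: since $\bB$ is a bouquet and $\bT$ is spanning it has a single vertex ($v=1$), it has $|S|$ edges, and being a quasi-tree it has a single face ($f=1$), whence $g(\bT)=\tfrac12(2-1+|S|-1)=|S|/2$. In particular every spanning quasi-tree has an even number of edges, so the sign $(-t)^{|S|}$ is automatically $t^{|S|}=t^{2g(\bT)}$, and summing over all spanning quasi-trees yields the claimed equality.

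The argument is essentially routine once Corollary~\ref{cor:bouquets} is available, so I do not anticipate a genuine obstacle; the only point needing a moment's care is the sign, and this resolves itself via the observation that $|S|=2g(\bT)$ is even (equivalently, that an odd-order skew-symmetric principal submatrix is singular, as noted in Subsection~\ref{danjkj}). It is also worth checking the degenerate term $S=\emptyset$: it corresponds to the spanning quasi-tree $\bB[\emptyset]$ of genus $0$ and contributes the constant term $1$ on both sides, consistent with the convention $\det(A[\emptyset])=1$.
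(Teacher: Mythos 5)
Your proposal is correct and follows essentially the same route as the paper: both expand the (reciprocal) characteristic polynomial as a sum of principal minors, invoke Corollary~\ref{cor:bouquets} to reduce the sum to spanning quasi-trees, handle the sign via the evenness of quasi-tree edge sets (equivalently, the singularity of odd-order skew-symmetric matrices), and finish with the genus computation $g(\bT)=|S|/2$. The only cosmetic difference is that you phrase the expansion as $\det(I_m - t\,A(\vec{\bB}))=\sum_{S}(-t)^{|S|}\det(A(\vec{\bB})[S])$, whereas the paper works with the coefficient formula $P_A(t)=\sum_k (-1)^{m-k}E_{m-k}(A)t^k$.
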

\begin{proof}
For any $m\times m$ matrix $A$ it is well-known (see, for example,~\cite[Section~1.2]{MR2978290}) that 
\begin{equation}\label{pshv}
   P_A(t)= \sum_{k=0}^m (-1)^{m-k}E_{m-k}(A)t^k,
 \end{equation}
where  $E_k(A)$   denotes the sum of the principal minors of $A$ of size $k$. 
Recall from Subsection~\ref{danjkj} that a skew-symmetric matrix of odd order is necessarily singular. Combining this observation with Corollary~\ref{cor:bouquets}, we see that
\begin{equation}\label{pshv2} t^mP_{A(\vec \bB)}(t^{-1})= \sum_{k=0}^m t^k E_k(A(\vec \bB)) 
= \sum_{\substack{X \subseteq E \\ \vec\bB [X] \text{ is a quasi-tree}}} t^{|X|}.  
    \end{equation}
When $\vec \bB[X]$ is a quasi-tree
 it has one vertex, $|X|$ edges, and one face and therefore has genus $(2-1+|X|-1)/2 = |X|/2$.
\end{proof}

\begin{example}
Let $\bG$ and $\vec{G}$ be as in Example~\ref{basicexamp1}. Then $\bG^{\{c\}}$ and $\bG^{\{a,b,c\}}$ are both bouquets. 
The matrix $A(\vec{\bG}^{\{c\}})$ has characteristic polynomial $t^4+5t^2$, and so by Theorem~\ref{charpolycount} the generating function for the spanning quasi-trees of $\bB$  with respect to genus is $5t^2+1$. It can be checked directly that the edge set of the spanning quasi-tree of genus zero is $\emptyset$, and the edge sets of those of genus one are $\{a,b\}$, $\{a,c\}$, $\{a,d\}$, $\{b,d\}$ and $\{c,d\}$.

Similarly, $A(\vec{\bG}^{\{a,b,c\}})$ has characteristic polynomial $t^4+4t^2+1$, so by Theorem~\ref{charpolycount} the generating function for the spanning quasi-trees of $\bB$  with respect to genus is $t^4+4t^2+1$. Again it can be checked directly that  edge set of the spanning quasi-tree of genus zero is $\emptyset$, and the edge sets of those of genus one are $\{a,b\},\{a,d\},\{b,c\},\{b,d\}$, and the one of genus two is $\{a,b,c,d\}$.
\end{example}

Theorem~\ref{charpolycount} shows that  the characteristic polynomial of $A(\vec{\bB})$ enumerates the spanning quasi-trees of a bouquet $\bB$ by genus. 
As every connected map is a partial dual of a bouquet, 
one may hope to extend Theorem~\ref{charpolycount} to an arbitrary map $\bG$ by forming a partial dual $\bG^T$ that is a bouquet and considering the characteristic polynomial of $A(\vec{\bG}^T)=A(\vec{\bG},T)$. But, as partial duals may have different numbers of spanning quasi-trees of any given genus, $t^{|E|}  P_{A(\vec{\bG},T))}(t^{-1})$ does \emph{not} in general enumerate the spanning quasi-trees of $\bG$ by genus. 
However, since partial duality does not change the total number of spanning trees, evaluating this polynomial at $t=1$ gives the total number of spanning quasi-trees in $\bG$. 

 This observation leads to an analogue of the Matrix--Tree Theorem for maps, expressing the number of  spanning quasi-trees in a map as the determinant of a matrix.   
 The resulting ``Matrix--Quasi-tree Theorem'', stated below as Theorem~\ref{basdh} is implicit in the literature. As outlined in Remark~\ref{gdsag}, by considering the directed medial graph, it can be deduced from Macris and Pule's work~\cite[Theorem~1]{MR1395471} on counting Eulerian circuits (see also~\cite{MR1428586,MR1762101}).

\begin{theorem}[Matrix--Quasi-tree Theorem]\label{basdh}
Let $\bG$ be a connected map on $m$ edges, $T$ be the edge set of any spanning quasi-tree of $\bG$, and $\vec{\bG}$ be obtained by directing the edges of $\bG$. 
Then  $\bG$ contains exactly 
$\det(A(\vec{\bG},T)+I_m)$ spanning quasi-trees.
\end{theorem}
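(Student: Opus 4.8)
The plan is to reduce the determinant $\det(A(\vec{\bG},T)+I_m)$ to a sum of principal minors of $A(\vec{\bG},T)$ and then to evaluate those minors using the bouquet results already established. The starting point is the standard determinantal identity
\[ \det(A + I_m) = \sum_{X \subseteq E} \det(A[X]), \]
valid for any $m \times m$ matrix $A$ with rows and columns indexed by $E$, with the convention $\det(A[\emptyset]) = 1$. This follows by expanding $\det(I_m + A)$ multilinearly in its columns: writing the $i$-th column of $I_m + A$ as the sum of the $i$-th standard basis vector and the $i$-th column of $A$ and distributing over all $2^m$ choices, the only surviving terms are those in which the $A$-columns selected form some set $X$ while the basis-vector columns fill the complement, and each such term contributes exactly $\det(A[X])$.

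Applying this with $A = A(\vec{\bG},T)$ yields
\[ \det(A(\vec{\bG},T)+I_m) = \sum_{X \subseteq E} \det\bigl(A(\vec{\bG},T)[X]\bigr). \]
Since $T$ is the edge set of a spanning quasi-tree, the partial dual $\vec{\bG}^T$ is a bouquet, and Lemma~\ref{lem:dualtree} identifies $A(\vec{\bG},T)$ with $A(\vec{\bG}^T)$. Corollary~\ref{cor:bouquets} then evaluates every principal minor on the nose: $\det(A(\vec{\bG}^T)[X])$ equals $1$ when $X$ is the edge set of a spanning quasi-tree of $\vec{\bG}^T$ and $0$ otherwise. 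Consequently the sum collapses to the number of spanning quasi-trees of $\bG^T$.

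It then remains to transfer this count from $\bG^T$ back to $\bG$. By the second part of Proposition~\ref{prop:onevertex}, the assignment $Y \mapsto Y \bigtriangleup T$ is a bijection between the edge sets of spanning quasi-trees of $\bG$ and those of $\bG^T$, so the two maps have equally many spanning quasi-trees. Combining this with the previous display gives $\det(A(\vec{\bG},T)+I_m) = \#\{\text{spanning quasi-trees of } \bG\}$, as required.

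The only genuinely load-bearing ingredient is the determinantal identity, which is where the additive shift by $I_m$ in the definition of the critical group acquires its enumerative meaning. The crucial consequence of skew-symmetry — that the nonzero principal minors equal $+1$ rather than merely $\pm 1$, so that the sum is an honest count with no cancellation — is already packaged into Corollary~\ref{cor:bouquets}. I therefore do not anticipate any real obstacle: the substance lies in recognising the right identity and in checking that the partial-duality bijection of Proposition~\ref{prop:onevertex} correctly transfers the quasi-tree count from $\bG^T$ to $\bG$.
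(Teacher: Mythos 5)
Your proof is correct and takes essentially the same route as the paper: the paper obtains the count by specializing Theorem~\ref{charpolycount} at $t=1$ (whose proof is precisely your principal-minor expansion combined with Corollary~\ref{cor:bouquets}), converts $\det(I_m - A(\vec{\bG}^T))$ to $\det(A(\vec{\bG}^T)+I_m)$ using skew-symmetry and transposition, and then invokes Lemma~\ref{lem:dualtree} and Proposition~\ref{prop:onevertex} exactly as you do. Your direct expansion of $\det(A(\vec{\bG},T)+I_m)$ merely inlines that $t=1$ specialization and sidesteps the transposition step -- a cosmetic streamlining, not a different method.
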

\begin{proof}
As $T$ is the edge set of a spanning quasi-tree, by Proposition~\ref{prop:onevertex} the partial dual $\vec{\bG}^T$ is a bouquet. 
Applying Theorem~\ref{charpolycount} to this bouquet, then taking $t=1$,  gives that 
  $\det (I_m - A(\vec{\bG}^T))$ equals the number of spanning quasi-trees in $\bG^T$. 
Since $A(\vec{\bG}^T)$ is skew-symmetric and the determinant is invariant under transposition we may write this as 
$\det ( A(\vec{\bG}^T)+I_m)$.
 
Next by Lemma~\ref{lem:dualtree}, $A(\vec{\bG}^T) = A(\vec{\bG},T)$, so $\det(A(\vec{\bG},T)+I_m)$ equals the number of spanning quasi-trees of $\bG^T$. 
The result then follows upon noting that by Proposition~\ref{prop:onevertex} the maps $\bG$ and $\bG^T$ contain the same number of spanning quasi-trees.
\end{proof}

It is worth highlighting that, following the notation in the statement of Theorem~\ref{basdh}, when $\bG$ is plane,  $\det(A(\vec{\bG},T)+I_m)$ counts the number of  spanning trees of $\bG$.

\medskip

Through an application of Theorem~\ref{basdh} we can  determine the order of the critical group of a map.

\begin{theorem}\label{order}
The order of the critical group $K(\bG)$ of a connected map $\bG$ is equal to the number of spanning quasi-trees in $\bG$.
\end{theorem}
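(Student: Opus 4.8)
The plan is to combine the general principle relating the order of a cokernel to a determinant (recorded in Subsection~\ref{danjkj}) with the Matrix--Quasi-tree Theorem (Theorem~\ref{basdh}). First I would recall the definition: by Definition~\ref{dcrit}, if $T$ is the edge set of any spanning quasi-tree of $\bG$ and $\vec{\bG}$ is any directing of the edges of $\bG$, then
\[ K(\bG) = \ints^{m}/\langle A(\vec{\bG},T)+I_m\rangle,\]
where $m$ is the number of edges. The crucial algebraic input is the fact from Subsection~\ref{danjkj} that, for an integer matrix $A$ that is invertible over $\reals$, one has $|\ints^m/\langle A\rangle| = |\det(A)|$. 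So the entire argument reduces to computing $|\det(A(\vec{\bG},T)+I_m)|$.

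The main step is then immediate from Theorem~\ref{basdh}, which asserts that $\det(A(\vec{\bG},T)+I_m)$ equals the number of spanning quasi-trees of $\bG$. Since $\bG$ is a connected map, it has at least one spanning quasi-tree (indeed $T$ itself induces one), so this number is a positive integer; in particular $\det(A(\vec{\bG},T)+I_m)\neq 0$, which guarantees that $A(\vec{\bG},T)+I_m$ is invertible over $\reals$ and licenses the application of the cokernel-order formula. Combining these gives
\[ |K(\bG)| = |\ints^m/\langle A(\vec{\bG},T)+I_m\rangle| = |\det(A(\vec{\bG},T)+I_m)| = \#\{\text{spanning quasi-trees of } \bG\}.\]

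I do not anticipate a serious obstacle here, since the hard analytic and combinatorial work has already been done in establishing Theorem~\ref{basdh} (the determinant-counts-quasi-trees statement) and Theorem~\ref{adhk} (well-definedness of $K(\bG)$). The only points requiring a moment's care are confirming that the determinant is nonzero so that the order formula from Subsection~\ref{danjkj} applies, and noting that the answer is independent of the choices of $T$ and of the directing $\vec{\bG}$ — but this independence is exactly what Theorem~\ref{adhk} provides, and the determinant count in Theorem~\ref{basdh} is likewise manifestly choice-independent. Thus the proof is essentially a one-line synthesis of the earlier results.
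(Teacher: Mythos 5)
Your proposal is correct and follows exactly the paper's own argument: apply Theorem~\ref{basdh} to identify $\det(A(\vec{\bG},T)+I_m)$ with the number of spanning quasi-trees, note that this makes the matrix non-singular, and invoke the cokernel-order formula from Subsection~\ref{danjkj}. Nothing is missing.
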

\begin{proof}
$K(\bG)$  is defined as $\mathbb{Z}^{m}/\langle A(\vec{\bG},T)+I_m\rangle$, where  $\vec{\bG}$ is obtained by directing the $m$ edges of $\bG$ and $T$ is the edge set of any spanning quasi-tree.
By Theorem~\ref{basdh} $\det (A(\vec{\bG},T)+I_m)$ equals the number of spanning quasi-trees in $\bG$. In particular, it is non-zero.
As $A(\vec{\bG},T)+I_m$ is non-singular, it follows from the remarks in Section~\ref{danjkj} that
$|K(\bG)|=|\mathbb{Z}^{m}/\langle A(\vec{\bG},T)+I_m\rangle|=\det (A(\vec{\bG},T)+I_m)$.
\end{proof}

Combining Theorems~\ref{prop:plane} and~\ref{order}  gives the following result.
\begin{corollary}\label{planeconv}
The critical group $K(\bG)$ of a connected map $\bG$ is isomorphic to the critical group $K(G)$ of its underlying abstract graph $G$ if and only if  $\bG$ is plane.
\end{corollary}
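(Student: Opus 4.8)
The plan is to prove the two directions separately, reducing the nontrivial one to a comparison of orders. The forward implication is immediate: if $\bG$ is plane, then Theorem~\ref{prop:plane} already supplies the isomorphism $K(\bG)\cong K(G)$. So the substance lies in the converse, which I would prove in contrapositive form: if $\bG$ is not plane then $K(\bG)\not\cong K(G)$. Since isomorphic groups have equal order, it suffices to show the orders differ, and for this I would invoke Theorem~\ref{order}, which gives $|K(\bG)|$ as the number of spanning quasi-trees of $\bG$, together with Theorem~\ref{classicalorder}, which gives $|K(G)|$ as the number of spanning trees of $G$.

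The first step is to identify the spanning trees of $G$ with the genus-zero spanning quasi-trees of $\bG$. If $T$ is the edge set of a spanning tree of $G$, then $\bG[T]$ is connected with $v$ vertices and $v-1$ edges; writing $f'$ for its number of faces, formula~\eqref{eqn:genus} gives its genus as $(1-f')/2$, and since $f'\ge 1$ while the genus is non-negative we must have $f'=1$, so $\bG[T]$ is a genus-zero spanning quasi-tree. Conversely, a genus-zero spanning quasi-tree has $v$ vertices and one face, hence $v-1$ edges by~\eqref{eqn:genus}, and so is a spanning tree. This already yields $|K(\bG)|\ge |K(G)|$, with equality precisely when every spanning quasi-tree of $\bG$ is a spanning tree.

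The main obstacle, and the heart of the argument, is therefore to exhibit a spanning quasi-tree of positive genus whenever $\bG$ is not plane; I would handle this using geometric duality, which cleanly produces a maximal-genus quasi-tree. By Proposition~\ref{prop:onevertex}, applied with $A$ equal to the full edge set, $\bG[S]$ is a spanning quasi-tree of $\bG$ if and only if $\bG^*[E\setminus S]$ is a spanning quasi-tree of $\bG^*$. Since $\bG^*$ is connected it has a spanning tree $T^*$, which is a genus-zero spanning quasi-tree of $\bG^*$ with $f-1$ edges (as $\bG^*$ has $f$ vertices). Its complement then induces a spanning quasi-tree $\bG[E\setminus T^*]$ of $\bG$, having $v$ vertices, one face, and $m-f+1$ edges, so by~\eqref{eqn:genus} its genus is $(2-v+m-f)/2=g(\bG)$. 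If $\bG$ is not plane this genus is at least one, so $\bG[E\setminus T^*]$ is a spanning quasi-tree that is not a spanning tree. Hence $|K(\bG)|>|K(G)|$, and the two groups cannot be isomorphic, completing the proof. The only point needing care is the genus bookkeeping in this complementation step, but this is a routine application of Euler's formula once the duality correspondence of quasi-trees from Proposition~\ref{prop:onevertex} is in hand.
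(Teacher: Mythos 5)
Your proposal is correct and follows essentially the same route as the paper's proof: the plane direction via Theorem~\ref{prop:plane}, and the converse by comparing $|K(\bG)|$ and $|K(G)|$ through Theorems~\ref{order} and~\ref{classicalorder}, producing a positive-genus spanning quasi-tree as the complement of a spanning tree of $\bG^*$ via Proposition~\ref{prop:onevertex}. Your version merely spells out the genus bookkeeping (identifying spanning trees with genus-zero quasi-trees and computing the complement's genus as $g(\bG)$) that the paper leaves as a brief parenthetical remark.
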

\begin{proof}
Every non-plane connected map contains a non-plane spanning quasi-tree. 
(To see this, let $T$ be the edge set of a spanning tree of $\bG^*$ and let $E-T$ be its complementary set. By Proposition~\ref{prop:onevertex},
$\bG[E-T]$ is a spanning quasi-tree of $\bG$. Using Equation~\eqref{eqn:genus} and the definition of $\bG^*$, $\bG[E-T]$ has strictly more edges than a spanning tree of $\bG$ unless $\bG$ has genus zero.)
Furthermore, every spanning tree of $\bG$ is a spanning quasi-tree.
Thus by Theorems~\ref{classicalorder} and~\ref{order}, when $\bG$ is non-plane $|K(G)|<|K(\bG)|$. The converse is Theorem~\ref{prop:plane}. 
\end{proof}

\begin{remark}\label{savgh}
Much in the proofs of Theorems~\ref{charpolycount} and~\ref{basdh} depend on properties of skew-symmetric PU-matrices rather than properties of maps. 
Extracting the matrix parts of the arguments in fact proves a more general result about delta-matroids. We keep our remarks concise and refer to~\cite{zbMATH07307106} for an introduction to delta-matroids.

Delta-matroids were introduced by Bouchet in~\cite{MR904585} and generalise matroids by effectively allowing bases of different sizes. 
A delta-matroid consists of a pair $(E,\mathcal{F})$, where $E$ is a finite set and $\mathcal{F}$ is a non-empty collection of its subsets, satisfying a particular exchange property whose definition we do not include here. The elements of $\mathcal{F}$ are called \emph{feasible sets}. A matroid is exactly a delta-matroid in which all feasible sets have the same size. A delta-matroid is \emph{normal} if the empty set is feasible. Delta-matroids also have a notion of a partial dual (often called a twist) which is an analogue of the map partial dual (see~\cite{zbMATH07094555}). 

If $A$ is a skew-symmetric matrix over any field $F$, then a delta-matroid $D(A)=(E,\mathcal{F})$ can be obtained by taking $E$ to be a set of row labels of the matrix, and taking $X\in \mathcal{F}$ if and only if $A[X]$ is non-singular. If a delta-matroid has a partial dual isomorphic to $D(A)$ then it is \emph{representable} over $F$. It is \emph{regular} if it is representable over every field. Geelen~\cite{MR2694410} showed  that a delta-matroid is regular if and only if we can take the matrix $A$ to be a skew-symmetric PU-matrix over the reals. The proofs of Theorems~\ref{charpolycount} and~\ref{basdh} thus give results about regular delta-matroids.

Following the proof of Theorem~\ref{charpolycount} until Equation~\eqref{pshv2} shows that the characteristic polynomial enumerates the feasible sets in a normal regular delta-matroid by size. Specifically, if $A$ is an $m\times m$ skew-symmetric PU matrix over the reals, then 
\[t^m  P_A(t^{-1}) = \sum_{F\in \mathcal{F}}  t^{|F|}.\]
Since partial duality does not change the number of feasible sets of delta-matroids, by taking $t=1$ in this equation gives an analogue of Theorem~\ref{basdh}: a regular delta-matroid has exactly $\det (A+I)$ feasible sets, where $A$ is a representing real skew-symmetric PU-matrix for one of its partial duals. 
\end{remark}

\begin{remark}\label{dfth}
In the light of Remark~\ref{savgh} it is natural to ask if the critical group of a map $\bG$ depends only on the delta-matroid $D(A(\vec\bG,T))$. Rather surprisingly the answer is no.

Let $\bG_1=[\sigma_1, \alpha, \phi_1 ]$ for  
$\sigma_1=( 1^+ 2^+ 3^+ 4^+ 5^+ 6^+ 1^-3^-2^-4^-6^-5^-)$
and $\bG_2=[\sigma_2, \alpha, \phi_2 ]$ for 
$\sigma_2 =(  1^+ 2^+ 3^+ 6^+5^+4^+ 1^- 3^- 2^- 5^- 6^- 4^-)$, 
where $\alpha$ is given by Convention~\ref{con1} and the $\phi_i$ are implicit.
Then it can be checked that $K(\bG_1)= \mathbb{Z}/3\mathbb{Z}  \oplus  \mathbb{Z}/6\mathbb{Z}$ and $K(\bG_2)=\mathbb{Z}/18\mathbb{Z}$.  
The delta-matroids $D(A(\bG_1))$ and $D(A(\bG_2))$ are equal, because the matrices $A(\bG_1)$ and $A(\bG_2)$ are both PU and are equal modulo 2. Indeed the maps $\bG_1$ and $\bG_2$ are 2-isomorphic in the sense of~\cite{MR4224067}  and hence give rise to the same delta-matroid. 

However, Geelen showed~\cite[Theorem~6.2]{MR2694410} that every normal  3-connected regular delta-matroid
is represented by a unique PU-matrix up to multiplying rows and columns by $-1$. It follows from this and the fact that the principal pivot transform~\cite{MR3396730, MR2694410} on matrices preserves the PU-property,
that we can define the critical group for 3-connected regular delta-matroids. We omit the details. 
\end{remark}

\bigskip

We now established weighted versions of Theorem~\ref{basdh}. Given a connected map $\bG$ with edge set $E=\{1,\ldots,m\}$ we associate an indeterminate $z_i$ with each edge $i$. With each subset $X$ of $E$, we associate a monomial $Z(X):=\prod_{i\in X}z_i$. Then the \emph{weighted quasi-tree generating polynomial} is defined by 
\[ Q(\bG):= \sum_{\substack{X\subseteq E: \\ \bB[X] \text{ is a quasi-tree}}}   Z(X).
\]
Suppose that we arbitrarily direct the edges of $\bB$ to obtain $\vec \bB$. Define the matrix $Z:=\diag(z_1,\ldots,z_m)$. 
\begin{theorem} [Weighted Matrix--Quasi-tree Theorem for bouquets]\label{thm:wmqtt}
Let $\bB$ be a bouquet with $m$ edges and let $\vec \bB$ be obtained from $\bB$ by arbitrarily directing its edges. Then
\[ Q(\bB) = \det(A(\vec \bB)Z+I_m).\]
\end{theorem}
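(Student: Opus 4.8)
The plan is to reduce the statement to a standard multilinear expansion of the determinant combined with Corollary~\ref{cor:bouquets}, which already packages all of the map-theoretic content.

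First I would establish the purely algebraic identity that for any $m\times m$ matrix $M$ with rows and columns indexed by $E$, and for $Z=\diag(z_1,\dots,z_m)$,
\[ \det(MZ+I_m) = \sum_{X\subseteq E} Z(X)\,\det(M[X]), \]
with the convention $\det(M[\emptyset])=1$. To see this, note that the $j$-th column of $MZ+I_m$ is $z_j M_{\cdot j}+e_j$, the sum of $z_j$ times the $j$-th column $M_{\cdot j}$ of $M$ and the $j$-th standard basis vector $e_j$. Expanding the determinant by multilinearity in its columns, one chooses in each column either the $z_j M_{\cdot j}$ term or the $e_j$ term; indexing by the set $X$ of columns where the $M$-term is selected, one factors out $\prod_{j\in X}z_j=Z(X)$. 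The remaining matrix has the standard basis column $e_j$ in each position $j\notin X$, and reordering rows and columns by the same permutation so that the indices of $X$ come first puts it in block lower-triangular form with diagonal blocks $M[X]$ and an identity block. Its determinant is therefore exactly $\det(M[X])$, which yields the identity.

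Second, I would apply this identity with $M=A(\vec\bB)$ to obtain
\[ \det(A(\vec\bB)Z+I_m)=\sum_{X\subseteq E} Z(X)\,\det\big(A(\vec\bB)[X]\big), \]
and then invoke Corollary~\ref{cor:bouquets}, which states that $\det(A(\vec\bB)[X])$ equals $1$ when $X$ is the edge set of a spanning quasi-tree of $\vec\bB$ and $0$ otherwise. Hence only the quasi-tree terms survive. Finally, since $\bB$ is a bouquet it has a single vertex, so every induced submap $\bB[X]$ again has a single vertex; consequently ``$\bB[X]$ is a quasi-tree'' coincides with ``$X$ is the edge set of a spanning quasi-tree of $\bB$''. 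The surviving sum is therefore precisely $\sum_{X:\,\bB[X]\text{ a quasi-tree}} Z(X)=Q(\bB)$, as required.

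There is no substantive obstacle here: the theorem's content is carried entirely by Corollary~\ref{cor:bouquets} (which in turn rests on Bouchet's Theorem~\ref{thm:Bouchetbouq} and the skew-symmetric PU facts of Subsection~\ref{danjkj}). The only point demanding care is the determinant identity, specifically checking that the block reduction produces the principal minor with the correct sign $+1$ and that the empty-set convention is respected. One could alternatively derive the identity by specializing the characteristic-polynomial expansion in Equation~\eqref{pshv2} (setting all $z_i$ equal recovers Theorem~\ref{charpolycount}, since $A(\vec\bB)$ is skew-symmetric), but the direct multilinear expansion is cleaner because $A(\vec\bB)Z$ is not itself skew-symmetric.
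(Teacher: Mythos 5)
Your proposal is correct and follows essentially the same route as the paper: both expand $\det(A(\vec\bB)Z+I_m)$ as a sum of principal minors $\sum_{X\subseteq E} Z(X)\det(A(\vec\bB)[X])$ and then invoke Corollary~\ref{cor:bouquets} to identify the surviving terms with spanning quasi-trees. The only difference is presentational: you prove the principal-minor expansion from scratch by multilinearity, while the paper cites it as standard and factors out $Z(X)$ from each minor.
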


\begin{proof}
We have
\[ \det(A(\vec \bB)Z+I_m) = \sum_{X\subseteq E} \det ((A(\vec \bB)Z)[X]),\]
where $(A(\vec \bB)Z)[X]$ denotes the principal submatrix $A(\vec \bB)Z$ 
containing the rows and columns corresponding to edges in $X$.
Furthermore for any subset $X$ of $E$, we have
\[ \det (A(\vec \bB)Z)[X] = \det (A(\vec \bB)[X]) Z(X).\]
The result now follows from Corollary~\ref{cor:bouquets}.
\end{proof}

The general case follows easily.
\begin{theorem}[Weighted Matrix--Quasi-tree Theorem]\label{thm:wmqtt2}
Let $\bG$ be a connected map with $m$ edges, let $\vec \bG$ be obtained from $\bG$ by arbitrarily directing its edges and let $T$ be the edge set of a spanning quasi-tree of $\bG$. Then
\[  Q(\bG) = \det(A(\vec \bG,T)Z_T+I_m)Z(T),\]
where $Z_T$ is the $m\times m$ diagonal matrix so that
\[ z_{i,i} = \begin{cases} z_i & \text{if $i \notin T$,}\\
z_i^{-1} & \text{if $i \in T$.}\end{cases}\]
\end{theorem}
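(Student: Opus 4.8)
The plan is to reduce the general case to the bouquet case (Theorem~\ref{thm:wmqtt}) via partial duality, exactly as was done for the unweighted Matrix--Quasi-tree Theorem (Theorem~\ref{basdh}). Since $T$ is the edge set of a spanning quasi-tree, Proposition~\ref{prop:onevertex} guarantees that the partial dual $\bB := \bG^T$ is a bouquet, and Lemma~\ref{lem:dualtree} gives $A(\vec\bB) = A(\vec\bG,T)$. The idea is therefore to apply Theorem~\ref{thm:wmqtt} to $\bB$ but with a substituted diagonal weight matrix, and then translate the resulting sum over spanning quasi-trees of $\bB$ back into a sum over spanning quasi-trees of $\bG$ using the bijection supplied by partial duality.

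First I would apply Theorem~\ref{thm:wmqtt} to $\vec\bB$ with the weight matrix $W := Z_T$ in place of $Z$. Since $A(\vec\bB)W + I_m = A(\vec\bG,T)Z_T + I_m$, and since the diagonal entry of $Z_T$ is $z_i$ for $i \notin T$ and $z_i^{-1}$ for $i \in T$, this immediately yields
\[
\det(A(\vec\bG,T)Z_T + I_m) = \sum_{\substack{Y \subseteq E:\\ \bB[Y] \text{ is a quasi-tree}}} \; \prod_{i \in Y \setminus T} z_i \prod_{i \in Y \cap T} z_i^{-1},
\]
where the split of the monomial simply records the two cases in the definition of $Z_T$.

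Next I would invoke the bijection $X \mapsto Y := X \bigtriangleup T$ furnished by the second part of Proposition~\ref{prop:onevertex}, which sends the spanning quasi-trees of $\bG$ onto those of $\bB = \bG^T$. The key bookkeeping step is to track how the monomial transforms under this bijection. Because $i \notin T$ forces $i \in Y \iff i \in X$, while $i \in T$ forces $i \in Y \iff i \notin X$, one obtains $Y \setminus T = X \setminus T$ and $Y \cap T = T \setminus X$. Substituting these identities shows that the summand associated with $X$ equals $\prod_{i \in X \setminus T} z_i \cdot \prod_{i \in T \setminus X} z_i^{-1}$, and multiplying this by $Z(T) = \prod_{i \in T} z_i$ (viewed as the product over the disjoint sets $X \cap T$ and $T \setminus X$) collapses it to $Z(X)$. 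Summing over all spanning quasi-trees $X$ of $\bG$ then gives $Z(T)\det(A(\vec\bG,T)Z_T + I_m) = Q(\bG)$, as required.

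The argument is essentially a change of variables combined with the partial-duality bijection, so I do not expect a serious obstacle; the only point demanding care is the correct handling of the inverted variables $z_i^{-1}$ attached to the edges of $T$, together with the verification that the compensating factor is precisely $Z(T)$. Once the identities $Y \setminus T = X \setminus T$ and $Y \cap T = T \setminus X$ are in hand, everything follows by direct substitution.
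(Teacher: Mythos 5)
Your proof is correct and is precisely the argument the paper intends: the paper gives no explicit proof of this theorem, remarking only that ``the general case follows easily'' from the bouquet case, and your reduction---substituting $Z_T$ for $Z$ in Theorem~\ref{thm:wmqtt} applied to $\bB=\bG^T$ (using Lemma~\ref{lem:dualtree} to identify $A(\vec\bB)=A(\vec\bG,T)$), then transporting the sum through the partial-duality bijection $X\mapsto X\bigtriangleup T$ of Proposition~\ref{prop:onevertex} and absorbing the compensating factor $Z(T)$---is exactly that easy deduction. The bookkeeping identities $Y\setminus T = X\setminus T$ and $Y\cap T = T\setminus X$ are right, so the monomials collapse to $Z(X)$ as you claim.
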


As $Q(\bG)$ is a multivariate polynomial in the  variables $z_1$, \ldots, $z_m$, we may consider questions concerning the location of its complex roots.  
Because $A(\vec \bB,T)$ is skew-symmetric, we may deduce the following result from the remarks after Theorem~4.2 of~\cite{MR2353258}. Recall that a multivariate polynomial with complex coefficients is \emph{Hurwitz stable} if it is non-zero when each of its variables has strictly positive real part.

\begin{theorem}\label{thm:hurst}
Let $\bG$ be a connected map. Then $Q(\bG)$ is Hurwitz stable.
\end{theorem}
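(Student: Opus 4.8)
The plan is to reduce Hurwitz stability of $Q(\bG)$ to a statement about the determinant $\det(A(\vec\bB,T)Z+I_m)$ for a skew-symmetric matrix, where it follows from a known stability criterion. By Theorem~\ref{thm:wmqtt2} the polynomial $Q(\bG)$ equals $\det(A(\vec \bG,T)Z_T+I_m)Z(T)$ up to the monomial factor $Z(T)$ and the substitution $z_i \mapsto z_i^{-1}$ for $i \in T$. Since $\bG$ is connected it has a spanning quasi-tree, so we may pass to the bouquet $\vec\bB := \vec\bG^T$, for which $A(\vec\bB) = A(\vec\bG,T)$ by Lemma~\ref{lem:dualtree}, and work with $Q(\bB) = \det(A(\vec \bB)Z + I_m)$ from Theorem~\ref{thm:wmqtt}. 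Thus it suffices to prove that $\det(A(\vec\bB)Z+I_m)$ is Hurwitz stable, and then argue that the passage back to $Q(\bG)$ preserves stability.

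The main engine is the cited result: the remarks following Theorem~4.2 of~\cite{MR2353258} give stability for determinants of the form $\det(AZ+I)$ precisely when $A$ is skew-symmetric (equivalently, $iA$ is Hermitian with the right definiteness, so that its real eigenvalues force the relevant nonvanishing when the $z_i$ have positive real part). Since $A(\vec\bB)$ is skew-symmetric by the remark after Definition~\ref{sadbh}, this applies directly and yields that $\det(A(\vec\bB)Z+I_m)$ is Hurwitz stable. Concretely, one checks that if each $z_i$ has strictly positive real part, then $A(\vec\bB)Z+I_m$ is nonsingular: writing $Z = R + iS$ with $R = \diag(\mathrm{Re}(z_i))$ positive definite, the skew-symmetry of $A(\vec\bB)$ forces the real part of the associated quadratic form to stay positive, so no nonzero vector lies in the kernel.

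The step requiring care is transferring stability from $Q(\bB) = \det(A(\vec\bB)Z+I_m)$ to $Q(\bG)$. Two issues arise: the diagonal substitution $z_i \mapsto z_i^{-1}$ on the coordinates indexed by $T$, and the monomial prefactor $Z(T)$. The prefactor is harmless, since multiplying a polynomial by a monomial in the variables does not change whether it is nonzero on the open right half-plane in each variable (the variables themselves are nonzero there). The substitution is the genuine point: the map $w \mapsto 1/w$ sends the open right half-plane $\{\mathrm{Re}(w)>0\}$ to itself, because if $w = a+bi$ with $a>0$ then $1/w = (a-bi)/(a^2+b^2)$ also has positive real part. Hence Hurwitz stability is preserved under inverting any subset of the variables, and applying this to the coordinates in $T$ shows $Q(\bG)$ inherits stability from $Q(\bB)$.

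I expect the principal obstacle to be invoking the external criterion cleanly: one must verify that the hypotheses in~\cite{MR2353258} match our situation exactly (skew-symmetry over $\reals$, the normalization $AZ+I$ rather than some other pencil, and the correct half-plane convention), and in particular confirm that real skew-symmetry—rather than a stronger positivity assumption—is what their theorem requires for this determinantal form. Once that citation is pinned down, the remaining work is the elementary half-plane-invariance observations above, which are routine. I would therefore write the proof as: reduce to the bouquet via Lemma~\ref{lem:dualtree} and Theorem~\ref{thm:wmqtt}, cite~\cite{MR2353258} for stability of $\det(A(\vec\bB)Z+I_m)$ using skew-symmetry, and close by noting that both the monomial factor and the coordinatewise inversion in Theorem~\ref{thm:wmqtt2} preserve Hurwitz stability.
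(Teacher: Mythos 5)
Your proposal is correct and takes essentially the same approach as the paper: the paper's entire proof of Theorem~\ref{thm:hurst} is the observation that $A(\vec\bG,T)$ is skew-symmetric combined with the remarks after Theorem~4.2 of~\cite{MR2353258}. Your extra steps---passing to the bouquet via Lemma~\ref{lem:dualtree} and Theorem~\ref{thm:wmqtt}, and checking that the monomial factor $Z(T)$ and the substitution $z_i\mapsto z_i^{-1}$ (which maps the open right half-plane to itself) preserve Hurwitz stability---simply make explicit the transfer from Theorem~\ref{thm:wmqtt2} that the paper leaves implicit.
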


\bigskip

We close this section by showing that a result of Berman~\cite{MR819700} concerning abstract graphs also holds for maps. Given an abstract graph $G$, its (binary) cycle space $\mathcal C_2(G)$ is the 
subspace of $\ints_2^{|E(G)|}$ 
generated by the incidence vectors of its cycles. Its orthogonal complement $\mathcal C_2^{\perp}(G)$
is the (binary) cocycle space of $G$, that is, the subspace of $\ints_2^{|E(G)|}$ generated by the incidence vectors of its cocycles. 
Its \emph{bicycle space} is $\mathcal B_2(G):=\mathcal C_2(G)\cap \mathcal C_2^{\perp}(G)$.
Berman~\cite{MR819700} showed that if $G$ is connected, then $|\mathcal B_2(G)|$ divides the number of spanning trees of $G$.

As $\mathcal C_2(G)$ and $\mathcal C_2^{\perp}(G)$ are orthogonal complements, we have 
$\mathcal B_2(G)= (\mathcal C_2(G) + \mathcal C_2^{\perp}(G))^{\perp}$.
It is well-known~\cite{Chris+Gordon} that if $T$ is the edge set of a spanning tree of $G$, then 
the rows of $M_2(G,T)$, the binary matrix obtained from $M(\vec G,T)$ by ignoring the signs, form a basis for $\mathcal C_2(G) + \mathcal C_2^{\perp}(G)$.
Thus $\mathcal B_2(G)$ is the kernel of $M_2(G,T)$.
It does not seem to be possible to define analogues of the cycle and cocycle space for maps, but the preceding discussion suggests a way to define the bicycle space of a map.  

For a connected map $\bG$ and edge set $T$ of a spanning quasi-tree of $\bG$, we define the matrix $A_2(\bG,T)$ to be the binary matrix obtained from $A(\vec \bG,T)$ by ignoring the signs. (This does not depend on the direction of the edges in $\vec \bG$, so it is a function of the underlying map $\bG$.) 
It may be shown directly using elementary row operations
that if $M$ is a symmetric binary matrix having zero diagonal
with $M_{e,f}\ne 0$, then the binary rowspaces of $I+M$ and $I+(M*\{e,f\})$ coincide. Consequently $\ker(I+M)=\ker(I+(M*\{e,f\}))$. Now, it is straightforward to apply the methods of Section~\ref{s:well}, to show that $\ker (A_2(\bG,T)+I)$ does not depend on the choice of $T$. 
So we define the bicycle space $\mathcal B_2(\bG):= 
\ker (A_2(\bG,T)+I)$, where $T$ is the edge set of an arbitrary spanning quasi-tree of $\bG$. The proof of the next result follows Berman's proof~\cite{MR819700} of the analogous result for abstract graphs.

\begin{theorem}\label{thm:bic}
Let $\bG$ be a connected map. Then $|\mathcal B_2(\bG)|$ divides the number of spanning quasi-trees of $\bG$.
\end{theorem}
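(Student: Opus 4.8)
The plan is to mirror Berman's argument by reducing everything to the Smith Normal Form of the single matrix $M := A(\vec{\bG},T)+I_m$, where $T$ is the edge set of a fixed spanning quasi-tree and the edges are directed arbitrarily. First I would record two facts already available. By the Matrix--Quasi-tree Theorem (Theorem~\ref{basdh}), $\det M$ equals the number of spanning quasi-trees of $\bG$; in particular $M$ is nonsingular over $\ints$. Second, since every entry of $A(\vec{\bG},T)$ lies in $\{-1,0,1\}$, ignoring signs agrees with reduction modulo $2$, so $A_2(\bG,T)+I_m$ is precisely the reduction of $M$ modulo $2$. Hence, by the definition of the bicycle space, $\mathcal B_2(\bG)=\ker(A_2(\bG,T)+I_m)$ is exactly the kernel of $M$ viewed over $\ints_2$. (The earlier verification that this kernel is independent of $T$ means the choice of $T$ is immaterial.)

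Next I would pass to the Smith Normal Form $\diag(b_1,\ldots,b_m)$ of $M$, with all $b_i$ strictly positive since $M$ is nonsingular. Writing $M=P\,\diag(b_1,\ldots,b_m)\,Q$ with $P$ and $Q$ unimodular over $\ints$, their reductions modulo $2$ are invertible over $\ints_2$, because the determinants of $P$ and $Q$ are odd. Consequently the rank of $M$ over $\ints_2$ equals the rank of $\diag(b_1,\ldots,b_m)$ over $\ints_2$, namely the number of $b_i$ that are odd. Therefore the $\ints_2$-nullity of $M$, which is $\dim_{\ints_2}\mathcal B_2(\bG)$, equals the number $d$ of invariant factors $b_i$ that are even.

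To finish, I would observe that $|\mathcal B_2(\bG)|=2^{d}$, while the number of spanning quasi-trees is $\det M=\prod_{i=1}^m b_i$. Each of the $d$ even invariant factors contributes at least one factor of $2$ to this product, so $2^{d}$ divides $\prod_{i} b_i$, which is exactly the claimed divisibility of the number of spanning quasi-trees by $|\mathcal B_2(\bG)|$.

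The argument is short precisely because the substantive work has already been carried out: the Matrix--Quasi-tree Theorem supplies both the nonsingularity of $M$ and the interpretation of $\det M$ as the quasi-tree count, and the independence of $\ker(A_2(\bG,T)+I_m)$ from $T$ is established just before the statement. The one point that needs care, and where I expect the only (still routine) difficulty to lie, is the bookkeeping that identifies the $\ints_2$-nullity of $M$ with the number of even invariant factors; factoring $M$ through its Smith Normal Form and using that the unimodular factors stay invertible modulo $2$ handles this cleanly.
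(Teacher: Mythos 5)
Your proposal is correct and follows essentially the same route as the paper: both identify $\mathcal B_2(\bG)$ with the $\ints_2$-kernel of $A(\vec\bG,T)+I$ reduced mod $2$, equate its dimension with the number of even invariant factors, and conclude via the quasi-tree count (the paper cites Theorem~\ref{order}, you cite Theorem~\ref{basdh}, which amount to the same thing since the group order is the product of the invariant factors). The only difference is that you spell out the Smith-Normal-Form bookkeeping that the paper's proof asserts without detail, which is a fine (and correct) addition.
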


\begin{proof}
By definition, $|\mathcal B_2(\bG)|=|\ker (A_2(\bG,T)+I)|$, where $T$ is the edge set of an arbitrary spanning quasi-tree of $\bG$. Let $l$ denote the (binary) nullity of 
$A_2(\bG,T)+I$. Then $l$ equals the number of even invariant factors of $A(\vec \bG,T)+I$, where $\vec \bG$ is obtained from $\bG$ by arbitrarily directing its edges. Theorem~\ref{order} implies that $2^l$ divides the number of spanning quasi-trees of $\bG$ and the result follows.
\end{proof}

\section{The critical group through the Laplacian}\label{s.Laplace}
Recall from Section~\ref{sec:prelim} that the classical critical group of an abstract graph can be defined as the cokernel of its signed cycle--cocycle matrix  or of its reduced Laplacian. 
Definition~\ref{dcrit} introduces the critical group of a map as the cokernel of a map version of the signed cycle--cocycle matrix. We now give an analogue of the Laplacian definition and show that these two definitions yield isomorphic groups.

Let $\bG=[\sigma,\alpha,\phi]$ be a connected map with set $D$ of darts. For a dart $d$, let $e(d)$ denote the edge $(d,\alpha(d))$.
The \emph{directed medial graph} $\vec M(\bG)$ is an abstract directed graph, with a vertex corresponding to each edge of $\bG$ and for each dart $d$ a directed edge from $e(d)$ to $e(\sigma(d))$. Notice that $\vec M(\bG)$ may contain parallel edges and loops, is strongly connected and every vertex of $\vec M(\bG)$ has both indegree and outdegree equal to $2$. Furthermore, for any set $A$ of edges of $\bG$, we have $\vec M(\bG^A)=\vec M(\bG)$. (Note that, in the literature, in contrast with our usage, medial graphs are typically embedded graphs.) 

Given an abstract directed graph $\vec G$ with $n$ vertices, its adjacency matrix $\Adj(\vec G)$ has rows and columns indexed by the vertices of $\vec G$, and $(i,j)$-entry equal to the number of edges of $\vec G$ with tail $i$ and head $j$. Let $d_i$ denote the outdegree of vertex $i$. Then the Laplacian $\Delta(\vec G)$ is given by $\Delta(\vec G):= \diag(d_1,\ldots,d_n)-\Adj(\vec G)$. Thus every row and column of $\Delta(\vec M(\bG))$ has sum zero, $\Delta(\vec M(\bG))$ has rank $n-1$ over the reals, and if $\vec M(\bG)$ is loopless, then each diagonal element of $\Delta(\vec M(\bG))$ is equal to $2$.  

\begin{definition}\label{dahdh}
Given a connected map $\bG$ with $m$ edges, we let $K_{\Delta}(\bG)$ denote the group defined by \[K_{\Delta}(\bG):= (\ints^m\cap \mathbf{1}^{\perp})/\langle \Delta(\vec M(\bG))\rangle.\]  
\end{definition}
It follows from Equation~\eqref{eq:lapdance} that
\begin{equation} \label{eq:lapboring}
K_{\Delta}(\bG)\cong \ints^{m-1}/\langle \Delta^q(\vec M(\bG))\rangle.
\end{equation}
Thus Equation~\eqref{eq:lappedby} gives
\begin{equation} \label{eq:lapsize}
|K_{\Delta}(\bG)| =  \det(\Delta^q(\vec M(\bG))).
\end{equation}

\begin{example}\label{ex:lap}
Consider the map $\bG$  in Figure~\ref{fex1}, and its 
 directed medial graph $\vec M(\bG)$ which is  shown in Figure~\ref{fex5}.
Its Laplacian is 
\[\begin{pmatrix*} 2 &-1&-1&0\\ 
0 & 2 & -1 & -1\\
0 & -1 & 2 & -1\\
-2 & 0 & 0 & 2
\end{pmatrix*}.
\] It is easily checked that its Smith Normal Form is $\diag(1,1,6,0)$, and that $ K_{\Delta}(\bG)=\mathbb{Z}/6\mathbb{Z}$.
\end{example}

The main result of this section is the following.
\begin{theorem}\label{thm:groupssame}
Let $\bG$ be a connected map. Then $K(\bG)\cong K_{\Delta}(\bG)$.
\end{theorem}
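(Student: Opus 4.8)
The plan is to exploit the partial-duality invariance of both sides to reduce to a bouquet, and then to compare the two defining matrices directly by integer row and column operations. First I would note that $K_{\Delta}(\bG)$ is \emph{literally} unchanged under partial duality: by Definition~\ref{dahdh} it depends only on $\vec M(\bG)$, and the paper records that $\vec M(\bG^A)=\vec M(\bG)$ for every edge set $A$. On the other side $K(\bG)\cong K(\bG^A)$ by Theorem~\ref{dualiso}. Choosing $A=T$ to be the edge set of a spanning quasi-tree, Proposition~\ref{prop:onevertex} makes $\bG^T$ a bouquet $\bB$. Hence it suffices to prove $K(\bB)\cong K_{\Delta}(\bB)$ for a directed bouquet $\vec\bB=[\sigma,\alpha,\phi]$ with $T=\emptyset$; that is,
\[
\ints^{m}/\langle A(\vec\bB)+I\rangle \;\cong\; (\ints^{m}\cap\mathbf{1}^{\perp})/\langle \Delta(\vec M(\bB))\rangle .
\]

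Next I would make the medial Laplacian of a bouquet completely explicit. Let $R$ be the $2m\times m$ dart–edge incidence matrix, whose $(d,i)$-entry is $1$ when dart $d$ belongs to edge $i$ and $0$ otherwise, and let $S$ be the $2m\times 2m$ permutation matrix of $\sigma$ acting on darts. Since distinct edges share no darts and each edge has exactly two darts, $R^{t}R=2I_{m}$, while $R^{t}SR$ is precisely the adjacency matrix of $\vec M(\bB)$ (its $(i,j)$-entry counts transitions of $\sigma$ from an $i$-dart to a $j$-dart). Consequently
\[
\Delta(\vec M(\bB))=R^{t}(I-S)R,
\]
a rank $m-1$ matrix with vanishing row and column sums, as demanded by Definition~\ref{dahdh}. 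This factorization packages the transitions of $\sigma$ — the same cyclic data that defines the interlacement entries of $A(\vec\bB)$ — in a form amenable to elimination.

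The core of the argument is then to transform $\Delta(\vec M(\bB))$ into $A(\vec\bB)+I$ by integer row and column operations that preserve invariant factors, in the spirit of the manipulations used for Lemma~\ref{lem:smith} and Theorem~\ref{thm:pet}. Because $\Delta(\vec M(\bB))$ has rank $m-1$ while $A(\vec\bB)+I$ is nonsingular (its determinant being the number of spanning quasi-trees by Theorem~\ref{basdh}), the comparison must absorb exactly one trivial factor: by Equation~\eqref{eq:lapdance} we have $\ints^{m}/\langle\Delta(\vec M(\bB))\rangle\cong\ints\oplus K_{\Delta}(\bB)$, so matching the two matrices up to a single $\ints$-summand is exactly what restricting to $\mathbf{1}^{\perp}$ records. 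As a consistency check that should fall out of the construction, the orders already agree: $|K(\bB)|=\det(A(\vec\bB)+I)$ is the number of spanning quasi-trees by Theorem~\ref{order}, while $|K_{\Delta}(\bB)|=\det(\Delta^{q}(\vec M(\bB)))$ by Equation~\eqref{eq:lapsize} counts the rooted arborescences of $\vec M(\bB)$, which is root-independent since $\vec M(\bB)$ is Eulerian; via the Euler-circuit correspondence of Remark~\ref{gdsag} (every vertex has in- and out-degree $2$, so the normalising factor $\prod_v(\deg^{+}(v)-1)!$ equals $1$) this is again the number of spanning quasi-trees.

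The hard part will be producing the explicit unimodular equivalence between the signed interlacement matrix $A(\vec\bB)+I$ and the reduced medial Laplacian $\Delta^{q}(\vec M(\bB))$, i.e. upgrading the equality of determinants to equality of the entire sequence of invariant factors. The delicate ingredient is the sign bookkeeping: one must track, around the single cycle $\sigma$, how the head/tail labelling of each dart (Convention~\ref{con1}) converts the incidence data carried by $R$ and $I-S$ into the signed cyclic patterns $(e^{+}f^{+}e^{-}f^{-})$ versus $(f^{+}e^{+}f^{-}e^{-})$ that define the entries of $A(\vec\bB)$. I would organise this by introducing the \emph{signed} dart–edge incidence matrix, with $+1$ at each head dart and $-1$ at each tail dart, and showing that the eliminations diagonalising $\Delta(\vec M(\bB))$ against this signed incidence reproduce $A(\vec\bB)+I$ on $\mathbf{1}^{\perp}$. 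Verifying that every operation used is integral and unimodular, so that it leaves the Smith normal form unchanged, is where the main technical effort lies.
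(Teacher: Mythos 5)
Your reduction to the bouquet case is sound and matches the paper's: $K_{\Delta}$ is literally invariant under partial duality because $\vec M(\bG^A)=\vec M(\bG)$, while $K(\bG)\cong K(\bG^A)$ by Theorem~\ref{dualiso}, so one may assume $\bG$ is a bouquet $\bB$. Your factorization $\Delta(\vec M(\bB))=R^{t}(I-S)R$ is also correct. But the proposal stops exactly where the theorem begins: the entire content of the statement is the passage from $\Delta(\vec M(\bB))$ to $A(\vec\bB)+I$, and you only announce that this will be achieved ``by integer row and column operations,'' explicitly deferring their construction (``the hard part will be producing the explicit unimodular equivalence\dots where the main technical effort lies''). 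No such sequence of operations is exhibited, and none can be inferred from what you wrote: equality of determinants (even together with the rank bookkeeping via Equation~\eqref{eq:lapdance}) does not produce a unimodular equivalence, and you cannot appeal to the mere \emph{existence} of one on the grounds that the cokernels are isomorphic, since that isomorphism is precisely what is to be proved --- that would be circular. As it stands, the proposal is a plan with its key step missing, not a proof.

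It is worth noting that the ingredients you relegate to a ``consistency check'' are strong enough to close the gap, and this is exactly the route the paper takes; the paper never produces a matrix equivalence at all. It defines $g(d)=u_{e(\sigma(d))}-u_{e(d)}$ on darts, extends to a homomorphism $f:\ints^m\to\ints^m\cap\mathbf{1}^{\perp}$ by $f(u_i)=g(i^+)$, verifies that $f$ sends every row of $A(\vec\bB)+I$ into $\langle\Delta(\vec M(\bB))\rangle$ (so it descends to $\hat f:K(\bB)\to K_{\Delta}(\bB)$), and shows $\hat f$ is surjective. Then the order count --- precisely your chain: $\det(A(\vec\bB)+I)$ equals the number of spanning quasi-trees by Theorem~\ref{basdh}, which equals the number of Euler tours of $\vec M(\bB)$, which equals the number of arborescences by the BEST theorem, which equals $\det(\Delta^{q}(\vec M(\bB)))$ --- promotes a surjection between finite groups of equal cardinality to an isomorphism. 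If you want to repair your write-up with minimal change, replace the unconstructed unimodular equivalence by a concrete homomorphism of this kind plus surjectivity, and let your ``consistency check'' carry the load as the final step; aiming for the full Smith-normal-form equivalence of the two matrices is strictly harder than what the theorem requires.
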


Before proving the theorem we show that the groups $K(\bG)$ and $K_{\Delta}(\bG)$ have the same size.

\begin{lemma}\label{lem:criticalssamesize}
Let $\bG$ be a connected map. Then $|K(\bG)|=|K_{\Delta}(\bG)|$.
\end{lemma}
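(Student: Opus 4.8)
The plan is to show that $|K(\bG)|$ and $|K_\Delta(\bG)|$ both count the spanning quasi-trees of $\bG$, and are therefore equal. One of these equalities is already in hand: by Theorem~\ref{order}, $|K(\bG)|$ equals the number of spanning quasi-trees of $\bG$. So essentially all the work lies in identifying $|K_\Delta(\bG)|$ with this same number.

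For that second count, I would begin from Equation~\eqref{eq:lapsize}, which gives $|K_\Delta(\bG)| = \det(\Delta^q(\vec M(\bG)))$. Since $\Delta(\vec M(\bG)) = \diag(d_1,\dots,d_n) - \Adj(\vec M(\bG))$ is the out-degree Laplacian of a directed graph, the directed (Tutte) Matrix--Tree Theorem identifies $\det(\Delta^q(\vec M(\bG)))$ with the number of spanning arborescences of $\vec M(\bG)$ directed toward the vertex $q$. The structural facts recorded just before Definition~\ref{dahdh} now become crucial: $\vec M(\bG)$ is strongly connected and every vertex has in-degree and out-degree equal to $2$, so $\vec M(\bG)$ is Eulerian and the arborescence count is independent of the choice of $q$.

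The heart of the argument is to match this arborescence count with the number of spanning quasi-trees. I would bridge the two using the BEST Theorem, which states that the number of Eulerian circuits of an Eulerian digraph equals the number of arborescences toward a fixed root times $\prod_v (d^+(v)-1)!$. Because every out-degree of $\vec M(\bG)$ equals $2$, each factor $(2-1)!$ is $1$, so the product is $1$ and the number of Eulerian circuits of $\vec M(\bG)$ coincides exactly with $\det(\Delta^q(\vec M(\bG)))$. I would then invoke the classical correspondence between Eulerian circuits of the directed medial graph and spanning quasi-trees of the map, recorded in Remark~\ref{gdsag} (in this directed form due to Macris and Pul\'e~\cite{MR1395471}). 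Combined with the invariance $\vec M(\bG^A) = \vec M(\bG)$ under partial duality and the fact from Proposition~\ref{prop:onevertex} that partial duality preserves the number of spanning quasi-trees, this shows that $\vec M(\bG)$ has exactly as many Eulerian circuits as $\bG$ has spanning quasi-trees. Chaining the equalities yields $|K_\Delta(\bG)|$ equal to the number of spanning quasi-trees, hence equal to $|K(\bG)|$.

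I expect the main obstacle to be precisely this final step: making the Eulerian-circuit--to--quasi-tree correspondence genuinely bijective and correctly normalized (choice of starting dart and traversal direction) so that it is compatible with the BEST count rather than off by an overall factor. An alternative route that avoids Eulerian circuits altogether is to compare determinants directly: Theorem~\ref{basdh} already expresses the number of spanning quasi-trees as $\det(A(\vec\bG,T)+I_m)$, so it would suffice to prove the matrix identity $\det(A(\vec\bG,T)+I_m) = \det(\Delta^q(\vec M(\bG)))$ between the two integer matrices attached to the same map. This reduces the lemma to a purely linear-algebraic comparison, but it still ultimately rests on the same medial-graph combinatorics.
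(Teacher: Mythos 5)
Your proposal is correct and follows essentially the same chain of equalities as the paper's proof: spanning quasi-trees $\leftrightarrow$ Euler tours of $\vec M(\bG)$, then the BEST Theorem (with all out-degrees $2$ making the factorial factors trivial), then the directed Matrix--Tree Theorem to reach $\det(\Delta^q(\vec M(\bG)))=|K_{\Delta}(\bG)|$. The only cosmetic difference is attribution: the paper invokes Bouchet's correspondence between quasi-trees and Euler tours directly for general connected maps, whereas you reduce to the bouquet case via partial duality and the Macris--Pul\'e connection of Remark~\ref{gdsag}, which is an equally valid bookkeeping of the same ingredient.
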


\begin{proof} 
The proof follows by combining some classical results with Theorem~\ref{basdh}. First note that $|K(\bG)|$ is equal to the number of spanning quasi-trees of $\bG$, by Theorem~\ref{basdh}.
Next the number of spanning quasi-trees of $\bG$ equals the number of Euler tours in $\vec M(\bG)$ by  Bouchet~\cite[Corollary~3.4]{MR1020647}.  
As every vertex of $\vec M(\bG)$ has both indegree and outdegree $2$, 
by the special case of the BEST Theorem~\cite{MR47311} proved by Tutte and Smith in~\cite{MR1525117}, 
the number of Euler tours of $\vec M(\bG)$ equals the number of arborescences rooted at an arbitrary vertex $q$ of $\vec M(\bG)$.
Then, by the Matrix--Tree Theorem for directed graphs~\cite{MR1579119,MR27521}, the number of arborescences rooted at $q$ is equal to the determinant of the reduced Laplacian, $\det(\Delta^q(\vec M(\bG)))$, which by Equation~\eqref{eq:lapsize} is $|K_{\Delta}(\bG)|$.
\end{proof}

\begin{proof}[Proof of Theorem~\ref{thm:groupssame}]
It follows from Theorem~\ref{dualiso} and our earlier observation that $\vec M(\bG^A)=\vec M(\bG)$ for any set $A$ of edges of $\bG$, that we need only consider the case where $\bG$ is a bouquet. 
Let $\vec \bB$ $=[\sigma,\alpha, \phi]$ be a directed bouquet with set $D$ of darts, and let $\bB$ be its underlying map. Suppose that $E(\vec \bB)=\{1,\ldots,m\}$. For $i=1,\ldots,m$, let $u_i$ denote the element of $\ints^m$ with $1$ in position $i$ and zeros elsewhere. 

Our aim is to define a group homomorphism  $f:\ints^m \rightarrow \ints^m\cap \mathbf{1}^{\perp}$ which will induce an isomorphism from $K(\bB)$ to $K_{\Delta}(\bB)$. 
To define $f$, we first introduce a map $g$ on the set of darts of $\bB$ and determine some of its key properties.
Let $g:D\rightarrow \ints^m\cap \mathbf{1}^{\perp}$ so that $g(d)=u_{e(\sigma(d))}-u_{e(d)}$. (Recall that $e(d)$ is the edge containing the dart $d$.)

Suppose that $d_1\ldots d_k$ is a subword of $\sigma$. Then
\[ \sum_{i=1}^{k-1} g(d_i) = u_{e(d_k))} - u_{e(d_1)}.\] 
In particular, if $d_k=\alpha(d_1)$, then $e(d_k)=e(d_1)$ and $\sum_{i=1}^{k-1} g(d_i)=0$.
Furthermore, for each edge $i$ 
\[ g(i^+) + g(i^-) = u_{e(\sigma(i^+))} + u_{e(\sigma(i^-))}
-2u(i),\]
which is equal to $i$-th row of $\Delta(\vec M(\bB))$ multiplied by $-1$. So $g(i^+)+g(i^-)\in \langle \Delta(\vec M(\bB)) \rangle$. 

Now define $f:\ints^m\rightarrow \ints^m\cap \mathbf{1}^{\perp}$ by setting $f(u_i)=g(i^+)$ and extending $f$ linearly to all elements of $\ints^m$. Clearly $f$ is a homomorphism. 
We will show that $f$ induces an isomorphism 
\[\hat f:\ints^m / \langle A(\vec{\mathbb B})+I\rangle \rightarrow  
(\ints^m\cap \mathbf{1}^{\perp})/\langle \Delta(\vec M(\bB))\rangle,\]
so that 
\[\hat f(x+\langle A(\vec{\mathbb B})+I\rangle) = f(x) + \langle \Delta(\vec M(\bB))\rangle.\]
Before we proceed, we introduce some notation.
Given a set $D'$ of darts, the edges of $\bB$ to which elements of $D'$ belong can be partitioned into three sets: 
\begin{align*}
    E_+&= \{e\in E(\vec \bB): e^+\in D',\ e^-\notin D'\}.\\
    E_-&= \{e\in E(\vec \bB): e^-\in D',\ e^+\notin D'\}.\\
    E_{+-} &= \{e\in E(\vec \bB): e^+\in D',\ e^-\in D'\}.
\end{align*}
(To avoid clutter we omit the dependency on $D'$ in this notation. This will always be clear from context.)

We first show that $\hat f$ is well defined.  As $f$ is a homomorphism, to do this it is sufficient to show that for all $i=1,\ldots,m$, if $x$ is the $i$-th row of 
$A(\vec{\mathbb B})+I$, 
then $f(x) \in \langle \Delta(\vec M(\bB))\rangle $.

Consider the subword $d_1=i^+d_2\ldots d_k=i^-$ of $\sigma$. Let $D':=\{d_1,\ldots,d_{k-1}\}$ and define $E_+$, $E_-$ and $E_{+-}$ as above.
Note that
\[ f(x) = \sum_{e\in E_+}  f(u_e)  - \sum_{e\in E_-} f(u_e).\]
Hence
\begin{align*}
 f(x) =& f(x) - \sum_{i=1}^{k-1} g(d_i)\\ 
=& \sum_{e\in E_+}  f(u_e)  - \sum_{e\in E_-} f(u_e) \\
& {} - \Big(\sum_{e\in E_+} g(e^+) + \sum_{e\in E_-} g(e^+) 
+ \sum_{e\in E_{+-}} (g(e^+)+ g(e^-))\Big)
\\=& \sum_{e\in E_+} (f(u_e) - g(e^+)) 
- \sum_{e\in E_-} (f(u_e) + g(e^-)) \\
& {} - \sum_{e\in E_{+-}} (g(e^+) + g(e^-)).\end{align*}
We have $f(u_e)=g(e^+)$ so each term in the first sum is zero and each term in the second and third sums has the form $g(e^+) + g(e^-)$ which belongs to $\langle \Delta(\vec M(\bB)) \rangle$.
So $f(x) \in \langle \Delta(\vec M(\bB))\rangle$ and $\hat f$ is well-defined. 

As $f$ is a homomorphism, it is clear that $\hat f$ is a homomorphism.

We now show that $\hat f$ is onto. It is sufficient to show that for any distinct edges $i$ and $j$ of $\bB$, we have $u_j-u_i + \langle \Delta(\vec M(\bB))\rangle \in \im(\hat f)$. Consider the subword $d_1=i^+d_2\ldots d_k=j^+$ of $\sigma$. Let $D':=\{d_1,\ldots,d_{k-1}\}$ and define $E_+$, $E_-$ and $E_{+-}$ as above.

Then
\begin{align*}  \MoveEqLeft{u_j-u_i - \Big(\sum_{e\in E_+} f(u_e) - \sum_{e\in E_-} f(u_e)\Big)}\\
&= \sum_{i=1}^{k-1} g(d_i) - \Big(\sum_{e\in E_+} f(u_e) - \sum_{e\in E_-} f(u_e)\Big)\\
&= \sum_{e\in E_+} (g(e^+)-f(u_e))
+ \sum_{e\in E_-} (g(e^-)+f(u_e))
+ \sum_{e\in E_{+-}} (g(e^+)+g(e^-)).\end{align*}
Each term in the first sum is zero and each term in the second and third sums has the form $g(e^+) + g(e^-)$, which belongs to $\langle \Delta(\vec M(\bB)) \rangle$. 
Thus \[u_j-u_i - \Big(\sum_{e\in E_+} f(u_e) - \sum_{e\in E_-} f(u_e)\Big) \in \langle \Delta(\vec M(\bB))\rangle.\]
It follows that 
\[ \hat f \Big(\sum_{e\in E_+} u_e - \sum_{e\in E_-} u_e\Big)
=u_j-u_i + \langle \Delta(\vec M(\bB))\rangle,\]
as required.

As $\hat f$ is onto, it follows from  Lemma~\ref{lem:criticalssamesize} that it is a bijection. Thus $K(\bB)$ and $K_{\Delta}(\bB)$ are isomorphic, as claimed. 
\end{proof}

\section{Chip firing}
 \label{sec:chip}
 
In this section we show that the expression for the critical group in terms of the Laplacian of the directed medial graph allows us to develop a chip-firing game for maps. We begin by briefly recalling the definition of the chip-firing game in both abstract graphs, where it has received most attention, and directed graphs, and its link with the critical group.

Within the literature~\cite{MR1676732,MR1120415,zbMATH06804236,MR3793659,MR1044086,MR2477390,zbMATH06952170,zbMATH01256098}, there are many variants on the chip-firing game, each with sometimes subtle differences in the definition, but informally speaking they are all based on the same idea: a number of chips is placed on each vertex of a graph and at any stage of the game, a vertex with at least as many chips as it has incident edges can fire by sending one chip along each of its incident edges to its neighbouring vertices. The game may stop should  no vertex be able to fire or may be infinite.

From the many variants of chip-firing in undirected graphs we describe the \emph{dollar game}. Our exposition follows~\cite{Chris+Gordon}. For simplicity we assume that our graphs are loopless.
Given a loopless, connected graph $G$ with vertex set $V$ and a distinguished vertex $q$, a \emph{state} of $G$ is a function $s:V\rightarrow \ints$ such that $s(v)\geq 0$ for all $v\in V-\{q\}$ and $\sum_{v\in V} s(v)=0$. 
A vertex other than $q$ can \emph{fire} (explained below) if $s(v) \geq d(v)$, where $d(v)$ is the degree of $v$. 
The vertex $q$ can fire if and only if no other vertex can fire.
When a vertex $v$ fires, the state $s$ is transformed to a new state $s'$, so that 
\[ s'(u):=   \begin{cases} s(u)-d(u) & \text{if $u=v$,}\\
s(u) + \Adj(G)_{v,u} & \text{otherwise,}
\end{cases}\]
where $\Adj(G)$ is the adjacency matrix of $G$.

A state $s$ is \emph{critical} if $s(v) < \deg(v)$ for all $v\in V-\{q\}$ and there is a non-empty sequence of firings transforming $s$ back to itself.
A slight adaptation of Theorem~14.11.1 of~\cite{Chris+Gordon} shows that if there is such a sequence, then there is one in which every vertex fires once. By regarding critical states as vectors in $\ints^n$ indexed by the vertices of $G$, the following is then shown in~\cite{Chris+Gordon}.

\begin{theorem}
\label{ajkw}
Let $G$ be a loopless, connected graph with $n$ vertices. Then every coset of $\langle \Delta(G)\rangle$ in $\ints^n \cap \mathbf{1}^{\perp}$ contains precisely one critical state. It follows that we can define a group on the set of critical states of $G$, by defining the sum of two critical states $s$ and $s'$ to be the unique critical state 
in the coset of $\langle \Delta(G)\rangle$ in $\ints^n \cap \mathbf{1}^{\perp}$
containing $s+s'$ and that this group is isomorphic to $K(G)$.
\end{theorem}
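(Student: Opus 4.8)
The plan is to reduce the statement to the classical theory of the abelian sandpile model, resting everything on the single observation that ties the chip-firing dynamics to the Laplacian. First I would record that firing a vertex $v$ replaces a state $s$ by $s-\Delta(G)_{v}$, where $\Delta(G)_{v}$ denotes the row of the Laplacian indexed by $v$, regarded as a vector in $\ints^n$; this is immediate from the firing rule and the definition of $\Delta(G)$. Since every state satisfies $\sum_{v}s(v)=0$ it lies in $\ints^n\cap\mathbf{1}^{\perp}$, and since each $\Delta(G)_{v}$ lies in $\langle\Delta(G)\rangle\subseteq\mathbf{1}^{\perp}$, every firing keeps us inside the same coset of $\langle\Delta(G)\rangle$ in $\ints^n\cap\mathbf{1}^{\perp}$. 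In particular the coset of a critical state is an invariant of the firing dynamics.

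The heart of the argument is the \emph{existence and uniqueness of a critical state in each coset}. For this I would first establish the abelian (confluence) property of the dynamics: from a configuration that is nonnegative away from $q$, whenever two non-$q$ vertices may legally fire, firing them in either order yields the same configuration and both intermediate configurations remain legal. Together with a termination argument—the total number of firings before a stable configuration is reached is bounded because $q$ acts as a sink in the connected graph $G$—this shows that \emph{stabilization}, firing legal non-$q$ vertices until none can fire, is a well-defined operation whose output depends only on the input and not on the order of firing. Using stabilization together with Dhar's burning criterion one then shows that each coset contains a critical (stable and recurrent) state and that two equivalent critical states must coincide. This is exactly the content carried over from \cite[Theorem~14.11.1]{Chris+Gordon}, together with the adaptation noted above that a recurring firing sequence may be taken to fire every vertex exactly once; I would invoke this classical sandpile machinery here rather than reprove it.

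Granting the resulting bijection between critical states and cosets, the group statement is then formal. By the previous paragraph the map sending each critical state to its coset is a bijection from the set of critical states onto $(\ints^n\cap\mathbf{1}^{\perp})/\langle\Delta(G)\rangle$. Since the sum $s+s'$ of two critical states again lies in $\ints^n\cap\mathbf{1}^{\perp}$, it has a unique critical representative, and defining $s\oplus s'$ to be this representative is precisely the quotient group law transported along the bijection; hence all group axioms are inherited automatically and the bijection becomes a group isomorphism. Finally, the identity $K(G)\cong(\ints^{n}\cap\mathbf{1}^{\perp})/\langle\Delta(G)\rangle$ established at the end of Section~\ref{danjkj} yields the desired isomorphism with $K(G)$.

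The main obstacle is the existence-and-uniqueness claim, whose honest proof rests on the abelian property and the characterization of recurrent configurations; everything downstream—the coset invariance and the transported group isomorphism—is routine bookkeeping.
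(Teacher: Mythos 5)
Your proposal is correct and takes essentially the same route as the paper: the paper offers no independent proof of this theorem, instead deferring to the classical sandpile machinery of \cite{Chris+Gordon} (a slight adaptation of Theorem~14.11.1 there), which is exactly what you invoke for the existence-and-uniqueness step. The surrounding reductions you spell out---coset invariance of firing, transport of the quotient group law along the bijection, and the identification $K(G)\cong(\ints^{n}\cap\mathbf{1}^{\perp})/\langle\Delta(G)\rangle$ from Section~\ref{danjkj}---are the same bookkeeping implicit in the paper's citation.
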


We now move on to chip-firing in Eulerian directed graphs, following the exposition in~\cite{zbMATH06804236}. The set-up is very similar to the undirected case, although we allow directed loops. We take $\vec G$ to be a strongly connected Eulerian directed graph. 
States are defined in the same way as for undirected graphs. Given a state $s$, a vertex other than $q$ can fire if $s(v) \geq d^+(v)$ where $d^+(v)$ is the outdegree of $v$.
The vertex $q$ can fire if and only if no other vertex can fire.
When a vertex $v$ fires, it sends one chip along each edge whose tail is $v$. Thus the state $s$ is transformed to a new state $s'$, so that 
\[ s'(u):=   \begin{cases} s(u)-d^+(u)+\Adj(G)_{u,u} & \text{if $u=v$,}\\
s(u) + \vec \Adj(G)_{v,u} & \text{otherwise.}
\end{cases}\]

A state $s$ is \emph{stable} if $s(v) < \deg^+(v)$ for all $v\in V-\{q\}$. A \emph{stabilization} $s^{\circ}$ of a state $s$ is a stable state reached from $s$ by firing vertices in $V-\{q\}$, possibly multiple times. It follows from~\cite[Lemma~2.4]{MR2477390} that every state has a unique stabilization.

A state $s$ is \emph{critical} if $s$ is stable and for every state $s_1$ there exists a state $s_2$ such that $(s_1+s_2)^{\circ}=s$. It follows from~\cite[Theorem~3]{zbMATH01256098} and~\cite[Theorem~3.11]{arxiv.1012.0287}  that a state $s$ is critical if and only if it is stable and there is a sequence of firings transforming $s$ to itself
in which each vertex fires once. 
The next result follows in a similar way to Theorem~\ref{ajkw}.

 \begin{theorem}
 \label{psndf}
Let $\vec G$ be a strongly connected Eulerian directed graph with $n$ vertices. Then every coset of $\langle \Delta(\vec G)\rangle$ in $\ints^n \cap \mathbf{1}^{\perp}$ contains precisely one critical state. 
It follows that we can define a group on the set of critical states of $\vec G$, by defining the sum of two critical states $s$ and $s'$ to be the unique critical state 
in the coset of $\langle \Delta(\vec G)\rangle$ in $\ints^n \cap \mathbf{1}^{\perp}$
containing $s+s'$ and that this group is isomorphic to $K(\vec G)$.
\end{theorem}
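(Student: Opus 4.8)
The plan is to follow the proof of Theorem~\ref{ajkw}, the only changes being that vertex degree is replaced by outdegree and the symmetric adjacency matrix by $\Adj(\vec G)$. I would first record the dictionary between firing and the Laplacian. Comparing the firing rule with $\Delta(\vec G)=\diag(d^+_1,\ldots,d^+_n)-\Adj(\vec G)$ shows that firing a vertex $v$ replaces a state $s$ by $s$ minus the row of $\Delta(\vec G)$ indexed by $v$. Because $\vec G$ is strongly connected and Eulerian, every row and every column of $\Delta(\vec G)$ sums to zero; hence the row lattice $\langle\Delta(\vec G)\rangle$ lies in $\ints^n\cap\mathbf 1^{\perp}$, the matrix has rank $n-1$, and by Equation~\eqref{eq:lapdance} the quotient $(\ints^n\cap\mathbf 1^{\perp})/\langle\Delta(\vec G)\rangle$ is finite and equal to $K(\vec G)$. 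In particular firing preserves the total chip count and moves a state only within its coset of $\langle\Delta(\vec G)\rangle$, and the cosets are exactly the classes obtained by subtracting arbitrary integer combinations of rows.

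The heart of the argument is the bijection between critical states and cosets, and the main obstacle is its \emph{uniqueness} half: that no coset contains two distinct critical states. Suppose $s$ and $s'$ are critical with $s-s'=\sum_v n_v r_v$, where $r_v$ is the $v$th row of $\Delta(\vec G)$. Using $\sum_v r_v=0$ I would normalise the multiplicities so that all $n_v\ge 0$ with $\min_v n_v=0$, read this off as a prescription to fire each $v$ exactly $n_v$ times in passing from $s$ to $s'$, and then combine the abelian property of the model with the full-round-firing characterisation of criticality recalled before the theorem (stable, with a legal firing sequence in which every vertex fires once returning the state to itself) to force all $n_v$ equal, whence $s=s'$. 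This is the precise analogue of \cite[Theorem~14.11.1]{Chris+Gordon}; verifying that the prescribed firings can be realised \emph{legally} and in an order-independent way in the directed setting is exactly the point supplied by the abelian sandpile results of \cite{MR2477390,zbMATH01256098,arxiv.1012.0287}.

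Uniqueness gives an injection from the set of critical states into the (finite) set of cosets. Since the number of critical (recurrent) states of an Eulerian directed graph equals $\det(\Delta^q(\vec G))=|K(\vec G)|$, the number of cosets, this injection is forced to be a bijection; this simultaneously yields the existence half, that every coset contains a critical state. (Alternatively, existence can be obtained directly by stabilising a suitably chosen representative of the coset, using the existence and uniqueness of stabilisations from \cite[Lemma~2.4]{MR2477390}.) Finally, the group statement is formal: the operation in the statement sends the pair $(s,s')$ to the unique critical representative of the coset of $s+s'$, which is precisely the transport of the group structure of $(\ints^n\cap\mathbf 1^{\perp})/\langle\Delta(\vec G)\rangle$ across the bijection $s\mapsto s+\langle\Delta(\vec G)\rangle$. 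Hence this bijection is a group isomorphism and the group of critical states is isomorphic to $K(\vec G)$.
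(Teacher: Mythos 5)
Your proposal is correct and takes essentially the same route as the paper, which proves this result in a single line by saying it ``follows in a similar way to Theorem~\ref{ajkw}'' (the Godsil--Royle dollar-game theorem), with the directed-case ingredients --- unique stabilization and the full-round-firing characterisation of critical states --- supplied by exactly the references \cite{MR2477390,zbMATH01256098,arxiv.1012.0287} you invoke. Your fleshing-out (firing as subtraction of rows of $\Delta(\vec G)$, uniqueness via the adaptation of Theorem~14.11.1 of \cite{Chris+Gordon}, existence via stabilisation of a coset representative, and formal transport of the group structure of $(\ints^n\cap\mathbf{1}^{\perp})/\langle\Delta(\vec G)\rangle$ across the resulting bijection) is precisely the intended adaptation.
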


\medskip

Our attention returns to maps. Recall that our aim in this section is to describe the critical group $K(\bG)$ in terms of the critical states of a chip-firing game on $\bG$. That is, we want an analogue of Theorem~\ref{ajkw}.

By Theorem~\ref{thm:groupssame}, we have that $K(\bG)\cong (\ints^m\cap \mathbf{1}^{\perp})/\langle \Delta(\vec M(\bG))\rangle$ where $m$ is the number of edges of $\bG$. 
Applying Theorem~\ref{psndf} to the directed medial graph $\vec M(\bG)$ and translating  the chip-firing game to maps gives us our analogue, as follows. 

Let $\bG=[\sigma, \alpha, \phi]$ be a map with a distinguished edge $q$. 
A \emph{state} of $\bG$ is a function $s:E\rightarrow \mathbb{Z}$ such that
 $s(e)\geq 0$ for all $e\in E-\{q\}$ and $\sum_{e\in E} s(e)=0$. The value $s(e)$ gives the number of \emph{chips} on $e$. 
 An edge other than $q$ can \emph{fire}  if $s(e) \geq 2$.  
The edge $q$ can fire if and only if no other edge can fire.
When an edge $e$ \emph{fires}, two chips are removed from $e$ and are then replaced on the edges that contain the darts $\sigma(e^-)$ and $\sigma(e^+)$.

A state $s$ is \emph{critical} if and only if $s(e)<2$ for all $e\in E-\{q\}$ and there is a sequence of firings transforming $s$ to itself
in which each edge fires once.  
The following is then seen to hold as a consequence of Theorem~\ref{psndf}.

 \begin{theorem}\label{psndf1}
Let $\bG=[\sigma, \alpha, \phi]$ be a connected map with $m$ edges including a distinguished edge $q$. 
Then every coset of $\langle \Delta(\vec M(\bG))\rangle$ in $\ints^m \cap \mathbf{1}^{\perp}$ contains precisely one critical state. 
It follows that we can define a group on the set of critical states of $\vec G$, by defining the sum of two critical states $s$ and $s'$ to be the unique critical state 
in the coset of $\langle \Delta(\vec M(\bG))\rangle$ in $\ints^m \cap \mathbf{1}^{\perp}$
containing $s+s'$ and that this group is isomorphic to $K(\bG)$.
\end{theorem}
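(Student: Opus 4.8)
The plan is to observe that the edge chip-firing game on $\bG$ is literally the vertex chip-firing game on the directed medial graph $\vec M(\bG)$, under the bijection identifying each edge $e$ of $\bG$ with the corresponding vertex of $\vec M(\bG)$, and then to invoke Theorem~\ref{psndf} together with Theorem~\ref{thm:groupssame}. First I would record that $\vec M(\bG)$ is a strongly connected Eulerian directed graph in which every vertex has indegree and outdegree equal to $2$, as was already noted when the medial graph was introduced. Thus Theorem~\ref{psndf} is applicable to $\vec M(\bG)$, whose $n$ vertices are exactly the $m$ edges of $\bG$.

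Next I would check that the two games coincide under this identification. A state of $\bG$ is a function $s\colon E\to\ints$ with $s(e)\geq 0$ for $e\neq q$ and $\sum_{e\in E}s(e)=0$; since the vertices of $\vec M(\bG)$ are the edges of $\bG$, this is precisely a state of $\vec M(\bG)$. The firing threshold matches because an edge $e$ fires when $s(e)\geq 2$, and $2$ is exactly the outdegree $d^+(e)$ of the corresponding vertex. For the firing rule itself, the two outgoing edges at the vertex $e$ of $\vec M(\bG)$ are those associated with the darts $e^+$ and $e^-$, running from $e$ to $e(\sigma(e^+))$ and from $e$ to $e(\sigma(e^-))$ respectively. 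Hence firing the vertex $e$ removes two chips from $e$ and deposits one on each of $e(\sigma(e^+))$ and $e(\sigma(e^-))$, which is exactly the prescription of the map game.

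The one point requiring genuine care is the case where $\sigma(e^+)$ or $\sigma(e^-)$ is itself a dart of $e$, corresponding to a loop at $e$ in $\vec M(\bG)$: here the chip is returned to $e$, and I would confirm that this agrees with the $\Adj(\vec M(\bG))_{e,e}$ correction appearing in the directed-graph firing rule, so that the net effect on $s(e)$ is identical in both formulations. With the firing rules matched, stability of $s$ in $\vec M(\bG)$ means $s(e)<d^+(e)=2$ for all $e\neq q$, and the characterization of criticality as stability together with the existence of a firing sequence returning $s$ to itself in which each vertex fires exactly once is identical, term for term, to the definition of a critical state of $\bG$.

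With the two games so identified, Theorem~\ref{psndf} applied to $\vec M(\bG)$ yields at once that every coset of $\langle\Delta(\vec M(\bG))\rangle$ in $\ints^m\cap\mathbf{1}^{\perp}$ contains precisely one critical state, and that the induced addition makes the set of critical states into a group isomorphic to $(\ints^m\cap\mathbf{1}^{\perp})/\langle\Delta(\vec M(\bG))\rangle$, which is $K_{\Delta}(\bG)$ by Definition~\ref{dahdh}. Combining this with the isomorphism $K(\bG)\cong K_{\Delta}(\bG)$ of Theorem~\ref{thm:groupssame} completes the proof. Apart from the loop subtlety, every step is a transparent unwinding of definitions, so I expect the main obstacle to be purely bookkeeping in nature rather than conceptual.
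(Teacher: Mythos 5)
Your proof is correct and takes essentially the same route as the paper: the paper likewise obtains Theorem~\ref{psndf1} by observing that the edge game on $\bG$ is the vertex game on the Eulerian directed graph $\vec M(\bG)$, applying Theorem~\ref{psndf} to it, and invoking Theorem~\ref{thm:groupssame} for the isomorphism with $K(\bG)$. Your explicit check that the loop correction $\Adj(\vec M(\bG))_{e,e}$ in the directed-graph firing rule matches the map rule of returning a chip to $e$ when $\sigma(e^{\pm})$ is a dart of $e$ simply spells out a detail the paper leaves implicit.
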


\begin{example}\label{ex:cf}
Let $\bG$ be the map in Figure~\ref{fex1}, and let $q=a$. If the initial state is 
\[(s(q),s(b),s(c),s(d))=(-1,0,1,0),\] then firing $q$ gives state $(-3,1,2,0)$, then firing $c$ gives $(-3,2,0,1)$, then firing $b$ gives $(-3,0,1,2)$, and finally firing $d$ gives $(-1,0,1,0)$. Thus   $(-1,0,1,0)$  is a  critical state. Similar calculations show that the set of critical states is
\[ \{(-1,0,1,0), (-1,1,0,0), (-2,0,1,1), (-2,1,0,1), (-2,1,1,0), (-3,1,1,1)\}.\]
\end{example}

\section{Further examples}
 \label{sec:examples}

\begin{figure}[!t]
    \centering
        \subfloat[ $K_5$ in the torus, giving $\bG_1$.]{
             \labellist
\small\hair 2pt
\pinlabel {$1$} at   31 47
\pinlabel {$2$} at   35 31
\pinlabel {$3$} at   50 35
\pinlabel {$4$} at    63 16
\pinlabel {$5$} at   39 11
\pinlabel {$6$} at    15 16 
\pinlabel {$7$} at   16 65
\pinlabel {$8$} at   64 62
\pinlabel {$9$} at   68 39 
\pinlabel {$9$} at  11 43
\pinlabel {$10$} at   45 50
\endlabellist
 \includegraphics[scale=1.5]{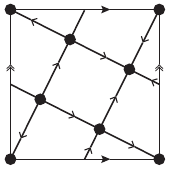} 
        \label{ex91a}}
        \qquad\qquad
         \subfloat[$K_{3,3}$ in the torus, giving $\bG_2$.]{
      \labellist
\small\hair 2pt
\pinlabel {$1$} at   53 60
\pinlabel {$2$} at  36 52
\pinlabel {$3$} at   28 45
\pinlabel {$4$} at    22 27
\pinlabel {$5$} at   56 31
\pinlabel {$6$} at    68 53
\pinlabel {$7$} at   58 10
\pinlabel {$8$} at   30 70
\pinlabel {$9$} at   10 13
\endlabellist
 \includegraphics[scale=1.5]{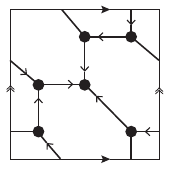} 
          \label{ex91b} 
        }

            \subfloat[The Petersen graph in the torus, giving $\bG_3$.]{
      \labellist
\small\hair 2pt
\pinlabel {$1$} at   45 62
\pinlabel {$2$} at   35 46
\pinlabel {$3$} at    32 36
\pinlabel {$4$} at   40 30
\pinlabel {$5$} at    50 36
\pinlabel {$6$} at     57 55
\pinlabel {$7$} at   25 56
\pinlabel {$8$} at   22 26
\pinlabel {$9$} at    60 26
\pinlabel {$10$} at   46 46
\pinlabel {$11$} at   48 11
\pinlabel {$12$} at   33 11
\pinlabel {$13$} at    10 68
\pinlabel {$14$} at   13 45
\pinlabel {$15$} at   9 15
\endlabellist
 \includegraphics[scale=1.5]{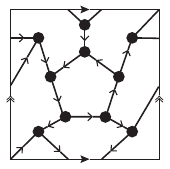} 
          \label{ex91c} 
        }
        \qquad\qquad
                  \subfloat[The Heawood graph in the torus, giving $\bG_4$.]{
      \labellist
\small\hair 2pt
\pinlabel {$1$} at   36 46
\pinlabel {$2$} at    45 40
\pinlabel {$3$} at    49 34
\pinlabel {$4$} at    47 20
\pinlabel {$5$} at     58 16
\pinlabel {$6$} at      57 71
\pinlabel {$7$} at    63 55
\pinlabel {$8$} at    67 50
\pinlabel {$9$} at     9 43
\pinlabel {$10$} at    17 31
\pinlabel {$11$} at    14 16
\pinlabel {$12$} at    26 9
\pinlabel {$13$} at     25 65
\pinlabel {$14$} at   37 60
\pinlabel {$15$} at   49 60
\pinlabel {$16$} at    59 39
\pinlabel {$17$} at    70 22
\pinlabel {$18$} at    11 60
\pinlabel {$19$} at     17 49
\pinlabel {$20$} at   27 28
\pinlabel {$21$} at   40 13
\endlabellist
 \includegraphics[scale=1.5]{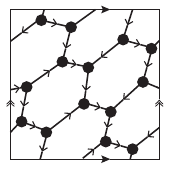} 
          \label{ex91d} 
        }
     \caption{Maps, drawn here as embedded graphs, considered in Section~\ref{sec:examples}. (The orientation is anticlockwise.)}
         \label{ex91}
\end{figure}

We close by presenting a collection of examples illustrating the calculation of the critical group of a map and contrasting it with the critical group of the underlying abstract graph.

For our first example, consider
Figure~\ref{ex91a} which shows a self-dual quadrangulation of the torus with edge set $E=\{1,\ldots,10\}$. Its underlying abstract graph is $K_5$, and the edges have been directed arbitrarily. 
The corresponding directed map $\vec{\bG}_1$ has 250 spanning quasi-trees of which 125 are spanning trees and the edge sets of the other 125 are the complements of the edge sets of spanning trees.
Taking the partial dual with respect to  $T_1=\{1,2,6,9\}$ 
 gives the bouquet  $\vec \bB_1=[\sigma,\alpha,\phi]$, where
 \[ \sigma = (1^-9^+8^+10^+3^+9^-6^+4^+8^-7^+6^-2^+5^+4^-3^-2^-1^+10^-5^-7^-).\]
The matrix $A(\vec{\mathbb{\bB}}_1)=A(\vec{\mathbb{\bG}}_1,T_1) $ is
 
\begin{equation}\label{exmatr2}
\begin{pmatrix} 
 0 & 0 & 0 & 0 &-1 & 0 &-1 & 0 & 0 &-1\\
 0 & 0 &-1 &-1 & 1 & 0 & 0 & 0 & 0 & 0\\
 0 & 1 & 0 & 0 & 1 & 0 & 1 &-1 &-1 & 0\\
 0 & 1 & 0 & 0 & 1 &-1 & 1 &-1 & 0 & 0\\
 1 &-1 &-1 &-1 & 0 & 0 & 0 & 0 & 0 &-1\\
 0 & 0 & 0 & 1 & 0 & 0 & 1 &-1 & 0 & 0\\
 1 & 0 &-1 &-1 & 0 &-1 & 0 & 0 & 0 &-1\\
 0 & 0 & 1 & 1 & 0 & 1 & 0 & 0 &-1 & 1\\
 0 & 0 & 1 & 0 & 0 & 0 & 0 & 1 & 0 & 1\\
 1 & 0 & 0 & 0 & 1 & 0 & 1 &-1 &-1 & 0
\end{pmatrix}.
\end{equation}
From Theorem~\ref{charpolycount} we find that the generating function for the number of spanning quasi-trees in $\mathbb{\bB}_1$ with respect to (two times the) genus is
\[ t^{10} P_{A(\vec{\mathbb{\bB}}_1)}(t^{-1}) = 16t^8 + 116t^6 + 96t^4 + 21t^2 + 1,\]  
and by computing the Smith Normal Form of $A(\vec{\mathbb{\bB}}_1)+I=A(\vec{\mathbb{\bG}}_1,T_1)+I $ we find that 
 the critical group $K(\bG_1)$ is $(\ints/5\ints)^2\oplus \ints/10\ints$.
Note that the classical critical group of the abstract graph $K_5$ is $(\ints/5\ints)^3$.

For our next example we consider the directed map $\vec \bG_2$  
shown as an embedded graph in Figure~\ref{ex91b}.  Its underlying abstract graph  is $K_{3,3}$. Taking $T_2=\{1,2,3,4,5\}$, we have
\[ A(\vec \bG_2,T_2)= \begin{pmatrix}
 1&0&0&0&0&1&-1&0&0\\
 0&1&0&0&0&1&-1&1&0\\
        0&0&1&0&0&-1&0&-1&1&\\
        0&0&0&1&0&0&0&-1&1&\\
        0&0&0&0&1&0&1&0&-1&\\
        -1&-1&1&0&0&1&-1&1&0&\\
        1&1&0&0&-1&1&1&0&-1&\\
        0&-1&1&1&0&-1&0&1&1&\\
        0&0&-1&-1&1&0&1&-1&1
   \end{pmatrix}.\]
From this we find that the critical group $K(\bG_2)$ is 
$\ints/6\ints\oplus \ints/18\ints$.
For comparison, the  classical critical group $K(K_{3,3})$  of its underlying abstract graph  is $(\ints/3\ints)^2  \oplus \ints / 9\ints$.

Next consider the unique embedding of the Petersen graph in the torus shown in Figure~\ref{ex91c}, and giving a map $\bG_3$. Taking $T_3=\{1,2,3,4,5,6,7,8,9\}$, gives
\[ A(\vec \bG_3,T_3)= 
\begin{pmatrix}
 1&0&0& 0&0&0& 0&0&0& 0&-1&-1&    0&0&0\\
 0&1&0& 0&0&0& 0&0&0& -1&-1&-1&    0&0&0\\
 0&0&1& 0&0&0& 0&0&0& -1&-1&-1&   -1&-1&0\\
0&0&0&  1&0&0& 0&0&0& -1&-1&0&    -1&-1&-1\\
0&0&0&  0&1&0& 0&0&0& -1&0&0&    -1&0&-1\\
0&0&0&  0&0&1& 0&0&0& 0&0&0&    -1&0&-1\\
0&0&0&  0&0&0& 1&0&0& 0&0&0&    -1&-1&0\\
0&0&0&  0&0&0& 0&1&0& 0&0&-1&    0&0&1\\
0&0&0&  0&0&0& 0&0&1& 0&-1&0&    0&-1&0\\
0&1&1& 1&1&0&  0&0&0& 1&0&0&     0&0&0\\
1&1&1& 1&0&0&  0&0&1& 0&1&0&     1&1&1\\
1&1&1& 0&0&0&  0&1&0& 0&0&1&     1&1&1\\
0&0&1& 1&1&1&  1&0&0& 0&-1&-1&  1&0&1\\
0&0&1& 1&0&0&  1&0&1& 0&-1&-1&  0&1&1\\
0&0&0& 1&1&1&  0&-1&0& 0&-1&-1&  -1&-1&1   
   \end{pmatrix}.\]
Then we find that the critical group is 
$\ints/2\ints\oplus \ints/1270\ints$.
For comparison, the classical critical group of the Petersen graph (as an abstract graph) is $\ints/2\ints\oplus(\ints/10\ints)^3$.

\medskip


We now consider a computational technique arising from  Theorem~\ref{thm:pet}. In general, given  a directed connected map $\vec \bG$ and the edge set $T$ of one of its spanning quasi-trees, computing the matrix $A(\vec \bG,T)+I$ is not difficult. Nevertheless, the matrices $B$ and $D'$ of Section~\ref{sec:connections} are more manageable and $B^t B+D'+I$ is more succinct in general. Thus we can use them to streamline computations of the critical group.

To illustrate this we reconsider the critical group of the map $\bG_1$ given by Figure~\ref{ex91a}, and the matrix $A(\vec \bG_1,T_1)$ in~\eqref{exmatr2}.
The matrix $B$ is then the submatrix of $A(\vec \bG_1,T_1)$  with row labels in 
  $T_1$ and column labels in $E-T_1$, and $D'$ is the principal submatrix with labels in $E-T_1$. Then 
  \[ B^t B+D'+I = \begin{pmatrix} 3&1&0&1&0&1\\1&3&0&2&-2&0\\-2&-2&3&1&0&0\\
  -1&0&1&3&-1&0\\2&0&0&-1&3&2\\1&0&2&2&0&3\end{pmatrix}.\]
Computing the Smith Normal Form of this matrix once again shows that $K(\bG_1)=(\ints/5\ints)^2\oplus \ints/10\ints$.

As another example of this method, 
consider Figure~\ref{ex91d} which shows 
the unique embedding of the Heawood graph in the torus. Its edges have been directed arbitrarily and we let   $\vec \bG_4$ denote the corresponding directed map.
 The underlying abstract graph of its dual $\bG_4^*$ is $K_7$. The set $T_4=\{1,2,\ldots,13\}$ is the edge set of a spanning tree of $\bG_4$.
 
To compute $K(\bG_4)$, again using the techniques suggested by Theorem~\ref{thm:pet}, we have that the information in the $21\times 21$ matrix $A(\vec \bG_4,T_4)+I$ may be compressed into the matrix $B^t B+I+D'$ 
 \[
\begin{pmatrix} 
14& 4 &4 &4 & 4& 10 &10 & 10 \\
6& 6 &3 &1 & 0& 6 & 5 & 3 \\
6& 3 &6 &3 & 1& 6 & 6 & 5 \\
6& 1 &3 &6 & 3& 5 & 6 & 6 \\
6& 0 &1 &3 & 6& 3 & 5 & 6 \\
8& 4 &4 &3 & 1& 10 &7 & 5 \\
8& 3 &4 &4 & 3& 7 & 10 & 7 \\
8& 1 &3 &4 & 4& 5 & 7 & 10 \\
\end{pmatrix}.
\]
By considering the Smith Normal Form of this matrix we find critical group $K(\bG_4)$ of the  Heawood graph embedded in the torus (and also of its dual, an embedding of $K_7$ in the torus)  is $(\ints/7\ints)^3\oplus(\ints/14\ints)^2$.
The classical critical group of the Heawood graph as an abstract graph is $(\ints/7\ints)^4\oplus(\ints/21\ints)$.

 We continue with an observation on this computational technique. For a map with $m$ edges and with a spanning tree with $r$ edges, the matrix $D'$ has $m-r$ rows and columns. As the critical group of a map $\bG$ is unchanged by taking partial duals, a strategy to minimize the size of the matrices involved in its computation is to find a partial dual of $\bG$ with the maximum possible number of vertices. But this strategy has limitations
 and does not always lead to a matrix $D'$ significantly smaller than $A(\vec \bG,T)+I$.
 For example, consider the bouquet $\mathbb{N}_m$, comprising $m$ edges which are pairwise interlaced. Each of its partial duals has at most two vertices.  
  After appropriately directing the edges, the matrix 
 $A(\vec{\mathbb{N}}_m)+I$  
 has $1$ in every position on and above the main diagonal and $-1$ in every position below the main diagonal. 
 Basic matrix manipulation shows that the invariant 
 factors are a single one and $m-1$ twos. Thus, the number of spanning quasi-trees in 
 $\mathbb{N}_m$ is $2^{m-1}$ and the critical group 
 $K(\mathbb{N}_m)$ is $(\ints/2\ints)^{m-1}$. Furthermore, as there are $m-1$ invariant factors the critical group cannot be computed from a matrix with fewer than $m-1$ rows and columns.

 \medskip  
 
Finally, we conclude with an open question on the form of the critical groups of maps. For any finite abelian group $H:=\bigoplus_{i=1}^{s} \mathbb{Z}/p_i\mathbb{Z}$ 
 there is a planar abstract graph $G$ with classical critical group $H$. Thus, any finite abelian group is the critical group 
 of some plane map $\bG$. It is, however, not difficult to prove that there is no $2$-connected 
 abstract graph $G$  with critical group $K(G)$ isomorphic to  $\ints/2\ints\oplus\ints/
 2\ints$. But, the bouquet $\mathbb{N}_3$ comprising $3$ pairwise interlaced edges is non-separable (meaning here that it does not arise as the map amalgamation, or connected sum, of two non-trivial maps) 
 and has critical group isomorphic to $\ints/2\ints\oplus\ints/
 2\ints$.
 Thus, an interesting question is to determine whether 
 every finite abelian group is the critical group of a non-separable map.

\bibliographystyle{alphaurl}
\bibliography{chip}

\end{document}